\newtheorem{theorem}{Theorem}[section]
\newtheorem{lemma}[theorem]{Lemma}
\newtheorem{prop}[theorem]{Proposition}
\newtheorem{cor}[theorem]{Corollary}
\newtheorem{definition}[theorem]{Definition}
\newtheorem{rmk}[theorem]{Remark}
\newcommand{\Hmm}[1]{\leavevmode{\marginpar{\tiny%
$\hbox to 0mm{\hspace*{-0.5mm}$\leftarrow$\hss}%
\vcenter{\vrule depth 0.1mm height 0.1mm width \the\marginparwidth}%
\hbox to 0mm{\hss$\rightarrow$\hspace*{-0.5mm}}$\\\relax\raggedright #1}}}
\newcommand{\nc}{\newcommand}
\nc{\les}{\lesssim}
\nc{\ges}{\gtrsim}
\nc{\nit}{\noindent}
\nc{\nn}{\nonumber}
\nc{\D}{\partial}
\nc{\diff}[2]{\frac{d #1}{d #2}}
\nc{\diffn}[3]{\frac{d^{#3} #1}{d {#2}^{#3}}}
\nc{\pdiff}[2]{\frac{\partial #1}{\partial #2}}
\nc{\pdiffn}[3]{\frac{\partial^{#3} #1}{\partial{#2}^{#3}}}
\nc{\abs}[1] {\lvert #1 \rvert}
\nc{\cAc}{{\cal A}_c}
\nc{\cE}{{\cal E}}
\nc{\mF}{{\mathcal F}}
\nc{\cF}{{\mathcal F}} 
\nc{\cP}{{\cal P}}
\nc{\cV}{{\cal V}}
\nc{\cQ}{{\cal Q}}
\nc{\cGin}{{\cal G}_{\rm in}}
\nc{\cGout}{{\cal G}_{\rm out}}
\nc{\cO}{{\cal O}}
\nc{\Lav}{{\cal L}_{\rm av}}
\nc{\cL}{{\mathcal L}}
\nc{\cH}{{\mathcal H}}
\nc{\cB}{{\cal B}}
\nc{\cZ}{{\cal Z}}
\nc{\cR}{{\cal R}}
\nc{\cT}{{\cal T}}
\nc{\cY}{{\cal Y}}
\nc{\cX}{{\cal X}}
\nc{\cXT}{{{\cal X}(T)}}
\nc{\cBT}{{{\cal B}(T)}}
\nc{\vD}{{\vec \mathcal{D}}}
\nc{\efield}{\mathcal{E}}
\nc{\vE}{{\vec \efield}}
\nc{\vB}{{\vec \mathcal{B}}}
\nc{\vH}{{\vec \mathcal{H}}}
\nc{\ty}{{\tilde y}}
\nc{\tu}{{\tilde u}}
\nc{\tV}{{\tilde V}}
\nc{\Pc}{{\bf P_c}}
\nc{\bx}{{\bf x}}
\nc{\bX}{{\bf X}}
\nc{\bXYZ}{{\bf XYZ}}
\nc{\bY}{{\bf Y}}
\nc{\bF}{{\bf F}}
\nc{\bS}{{\bf S}}
\nc{\dV}{{\delta V}}
\nc{\dE}{{\delta E}}
\nc{\TT}{{\Theta}}
\nc{\dPsi}{{\delta\Psi}}
\nc{\order}{{\cal O}}
\nc{\Rout}{R_{\rm out}}
\nc{\eplus}{e_+}
\nc{\eminus}{e_-}
\nc{\epm}{e_\pm}
\nc{\eps}{\varepsilon}
\nc{\vnabla}{{\vec\nabla}}
\nc{\G}{\Gamma}
\nc{\w}{\omega}
\nc{\mh}{h}
\nc{\mg}{g}
\nc{\vphi}{\varphi}
\nc{\tlambda}{\tilde\lambda}
\nc{\be}{\begin{equation}}
\nc{\ee}{\end{equation}}
\nc{\ba}{\begin{eqnarray}}
\nc{\ea}{\end{eqnarray}}
\nc{\g}{\gamma}
\nc{\ol}{\overline}
  \nc{\F}{\mathcal F}
 \def\H{\mathcal H}
\def\R{\mathbb R}
\def\C{\mathbb C}
\nc{\T}{\mathbb T}
\nc{\Z}{\mathbb Z}
\nc{\N}{\mathbb N}
\nc{\pt}{\partial_t}
\nc{\la}{\langle}
\nc{\ra}{\rangle}
\nc{\infint}{\int_{-\infty}^{\infty}}
\nc{\halfwidth}{6.5cm}
\nc{\uu}{\" u}
\nc{\oo}{\" o}
\nc{\nlayers}{L} \nc{\nsectors}{M}
\nc{\indicator}{\mathbf{1}}
\nc{\Rhole}{R_{\rm hole}}
\nc{\Rring}{R_{\rm ring}}
\nc{\neff}{n_{\rm eff}}
\nc{\Frem}{F_{\rm rem}}
\nc{\DD}{\Delta}
\nc{\cD}{\mathcal D}
\nc{\lnorm}{\left\|}
\nc{\rnorm}{\right\|}
\nc{\rnormp}{\right\|_{\ell^{p,\eps}}}
\nc{\rar}{\rightarrow}
\nc{\sgn}{{\rm sign}}
\nc{\non}{\nonumber}
\nc{\wh}{\widehat}
\date{\today}
\begin{document}

\title[Zakharov System]{The Zakharov System on the Upper Half--plane}

\author[  Erdo\u{g}an and Tzirakis]{ M. B. Erdo\u{g}an   and N. Tzirakis}
 
\address{Department of Mathematics \\
University of Illinois \\
Urbana, IL 61801, U.S.A.}
\email{berdogan@illinois.edu}


\address{Department of Mathematics \\
University of Illinois \\
Urbana, IL 61801, U.S.A.}
\email{tzirakis@illinois.edu}

\begin{abstract}

In this paper we study the Zakharov system   on the upper half--plane $U=\{(x ,y)\in \R^2: y>0\}$ with non-homogenous boundary conditions. In particular we obtain low regularity local well--posedness using the restricted norm method of Bourgain and   the Fourier--Laplace method of solving initial and boundary value problems.  Moreover we prove that the nonlinear part of the solution is in a smoother space than the initial data. To our knowledge this is the first paper which establishes low regularity results for the 2d initial-boundary value Zakharov system.

\end{abstract}

\maketitle

\section{Introduction}

In this paper we study the following initial-boundary value problem for the 2d-Zakharov system
\begin{equation}\label{ZS}
\left\{
\begin{array}{l}
i u_{t}+\Delta u= nu, \quad (x,y) \in U,\, t>0,\\
n_{tt}-\Delta n=\Delta(|u|^2),\quad (x,y) \in U,\, t>0,\\
u(x,y,0)=f(x,y)\in H^{s_1}(U),\quad u(x ,0,t)=k(x ,t),  \quad x \in\R, t>0,\\
n(x,y,0)=g(x,y)\in H^{s_2}(U),\quad n_t(x,y,0)=h(x,y)\in H^{s_2-1}(U), \\
 n(x ,0,t)=\ell(x ,t),\quad x \in\R, t>0,
\end{array}
\right.
\end{equation}
where $U=\{ (x ,y)\in \R^2: y>0\}$ is the upper half--plane. For $s>-\frac12$, we define $H^s(U)$ by extension to $\R^2$.  
For $s<-\frac12$, we define it as a tempered distribution in $H^s(\R^2)$ whose support is contained in $U$, \cite{collianderkenig}. We will determine the suitable spaces for $k$ and $\ell$ later, which turn out to be $L^2$ based Sobolev-type  spaces,  see \eqref{normHS} and \eqref{normHW}.
In addition, for  $s>\frac12$ we have  the   compatibility condition for the $L^2$ traces: $f|_{y=0}=k|_{t=0}$ and similarly for the wave part. The compatibility condition is necessary since the solutions we are interested in have  continuous $L^2_x$ traces for $s>\frac12$, see Lemma~\ref{lem:embedding}.  
In addition, we will work with the Klein-Gordon equation instead of the wave equation as in \cite{gtv}.

For the Euclidean space, well--posedness theory for the Zakharov system was completed in a series of papers. The final results are sharp due to a scaling argument that appears in \cite{gtv}. In \cite{SS}, existence results for smooth solutions in dimensions $d \leq 3$ were obtained. The regularity assumptions and dimension restrictions were weakened in \cite{AA, SW, OT, KPV}. Later, Bourgain and Colliander, \cite{boco}, proved local well-posedness in all dimensions using the restricted norm method of Bourgain \cite{bourgain}. In \cite{gtv}, Ginibre, Tsutsumi, and Velo applied Bourgain's   method to extend the well--posedness results in all dimensions, covering the full subcritical regularity range for $d\geq 4$. In dimension $d=1$, they obtained local existence at the critical regularity $L^2 \times H^{-\frac12} \times H^{-\frac32}$. In \cite{Hol}, local ill-posedness results were obtained for some regularities outside the well-posedness regime  established in \cite{gtv}. In dimensions two and three, the local well-posedness was obtained in the critical space $L^2 \times H^{-\frac12} \times H^{-\frac32}$ in \cite{BHHT} and \cite{BH} respectively. These results are sharp in the sense that the data-to-solution map fails to be analytic at lower regularity levels.

In this paper, we prove well-posedness for the 2d Zakharov system on the half plane with initial and boundary data in  $L^2$ based Sobolev spaces and obtain low regularity strong solutions. We also impose a non-homogenous boundary constraint at $y=0$.  Wellposedness of \eqref{ZS}  means local existence, uniqueness and continuity with respect to the initial data of distributional solutions. The main part of our paper is the application of the restricted norm method for both free and boundary solution operators which generate correction terms that are absent on $\Bbb R^2$. Moreover certain nonlinear estimates require additional decompositions since the resonant sets in two dimensions are quite large. Here we utilize the machinery developed in \cite{BHHT}, where the main estimates were achieved for
functions which are dyadically localized in frequency and modulation. In some cases and depending on the frequency interactions, one has to further decompose using angular decompositions. For the definition of the  usual Sobolev spaces  and their adapted generalization we refer the reader to the Notation subsection below. 

One has to be careful defining the right notion of strong solutions. More precisely we have the following definition: 

\begin{definition}\label{def:lwp}  
We say \eqref{ZS} is locally well--posed in $H^{s_1}(U)\times H^{s_2}(U)\times H^{s_{2}-1}(U)$, if \\
i) for any $(f,g,h) \in H^{s_1}(U)\times H^{s_2}(U)\times H^{s_{2}-1}(U)$ and $(k,l) \in \cH^{s_1}_{x,t}(U)\times \cH^{s_2}_{x,t}(U)$, with the compatibility conditions $f(x,0)=k(x,0)$, $g(x,0)=l(x,0)$ a.e. for $s>\frac12$, the equation has a distributional solution on $[0,T]$ 
 such that
$$
u \in   X^{s_1,b}_S\cap C^0_tH^{s_1}_{x,y}  \cap C^0_y\cH^{s_1}_{x,t} ,
$$
$$n\in   X^{s_2,b}_W\cap C^0_tH^{s_2}_{x,y}   \cap C^0_y\cH^{s_2}_{x,t} ,$$
where $T=T(\|f\|_{H^{s_1}(U)},\|k\|_{\cH^{s_1}_{x,t} (U) }, \|g\|_{H^{s_2}(U)},  \|h\|_{H^{s_2-1}(U)}, \|l\|_{\cH^{s_2}_{x,t} (U) })$, \\ 
ii) if $f_j\to f$ in $H^{s_1}(U)$, $g_j\to g$ in $H^{s_2}(U)$, $h_j\to h$ in $H^{s_2-1}(U)$, $k_j\to k$ in $\cH^{s_1}_{x,t} (U)$ and $l_j\to l$ in $\cH^{s_2}_{x,t} (U)$, then $(u_j,n_j)\to (u,n)$ in the space above.

\end{definition}

Our first theorem establishes   local well--posedness.

\begin{theorem} \label{thm:local} Fix   $s_2\in (-\frac12, \frac32)\setminus\{\frac12\}$ and let $s_1\neq \frac12$ satisfy   $$\max\big(\frac18,\frac{s_2}2+\frac14,s_2,2s_2-1\big)<s_1<s_2+1.$$ Then, the equation \eqref{ZS} is locally well--posed in $H^{s_1}(U) \times H^{s_2}(U) \times H^{s_2-1}(U)$ in the sense of Definition~\ref{def:lwp}. 
 \end{theorem}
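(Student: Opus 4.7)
The plan is to recast \eqref{ZS} as a fixed--point problem on all of $\R^2 \times \R$ following the Colliander--Kenig / Bona--Sun--Zhang template for initial--boundary value problems, with the restricted--norm machinery of \cite{gtv,BHHT} controlling the nonlinear interactions. First I would choose bounded extensions $F,G,H$ of $f,g,h$ from $U$ to $\R^2$ in $H^{s_1}$, $H^{s_2}$, $H^{s_2-1}$ respectively, and look for the Schr\"odinger component in the form
$$u \;=\; \eta(t)\,e^{it\Delta}F \;+\; \eta(t)\,D_S\bigl(k - p_S\bigr) \;+\; \eta(t)\int_0^t e^{i(t-t')\Delta}(nu)(t')\,dt',$$
with a parallel ansatz for $n$ using the Klein--Gordon propagator and a boundary corrector $D_W$. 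Here $\eta$ is a smooth temporal cutoff, $p_S,p_W$ are the $y=0$ traces of the free-plus-Duhamel pieces, and $D_S,D_W$ are Fourier--Laplace--type boundary operators built so that $u$ and $n$ assume the prescribed Dirichlet data on $y=0$. The use of Klein--Gordon in place of the wave equation (as in \cite{gtv}) removes a low--frequency singularity that would otherwise block the boundary analysis.

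The linear half of the proof requires three groups of estimates. First, the free Schr\"odinger and Klein--Gordon evolutions of the extensions lie in $X^{s_1,b}_S$ and $X^{s_2,b}_W$ after time cutoff, with continuity in $t$ valued in $H^{s}_{x,y}$. Second, a Kato--type smoothing fact is needed to guarantee that their $y=0$ traces sit in $\cH^{s_1}_{x,t}$ and $\cH^{s_2}_{x,t}$ as defined in \eqref{normHS}--\eqref{normHW}. Third, the boundary operators $D_S$ and $D_W$ must map boundary data back into the relevant Bourgain spaces and be continuous in $y$ with values in $\cH^{s}_{x,t}$, as well as in $t$ with values in $H^{s}$; this is where the two--dimensional half--plane geometry enters and contour deformations adapted separately to the Schr\"odinger and Klein--Gordon dispersive branches are required. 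Lemma~\ref{lem:embedding} is the natural home for the continuous--trace portion.

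The heart of the argument is the bilinear estimates
$$\|nu\|_{X^{s_1,b-1}_S} \;\les\; \|n\|_{X^{s_2,b}_W}\,\|u\|_{X^{s_1,b}_S}, \qquad \|\Delta(u\overline v)\|_{X^{s_2-1,b-1}_W} \;\les\; \|u\|_{X^{s_1,b}_S}\,\|v\|_{X^{s_1,b}_S},$$
over the range $\max\!\bigl(\frac18,\frac{s_2}{2}+\frac14,s_2,2s_2-1\bigr)<s_1<s_2+1$. This is the main obstacle: in two dimensions the Schr\"odinger--Klein--Gordon resonance set is a large annulus, not a null set, so a dyadic modulation/frequency decomposition alone is insufficient. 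Following the scheme of \cite{BHHT}, after a Littlewood--Paley decomposition I would split each interaction into low--high, high--low and high--high regimes, dyadically localize in modulation, and then in the resonant high--high piece further decompose the Klein--Gordon frequency into thin angular sectors aligned with its direction. This angular refinement restores enough transversality that the estimate can be closed using $L^4_{t,x,y}$ Strichartz for Schr\"odinger, the wave/Klein--Gordon Strichartz on transversal surfaces, and direct $L^2$ bilinear Cauchy--Schwarz in the non-resonant regions. The thresholds on $(s_1,s_2)$ in the statement are precisely the breakpoints produced by the worst of these regimes: $\frac18$ from the Schr\"odinger $L^4$ exponent, $\frac{s_2}{2}+\frac14$ from the Klein--Gordon Strichartz, and $2s_2-1$, $s_2$ from the energy transfer between dispersive branches.

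With the linear and bilinear estimates assembled, a standard contraction on a small ball of
$$\bigl(X^{s_1,b}_S \cap C^0_tH^{s_1}_{x,y} \cap C^0_y\cH^{s_1}_{x,t}\bigr) \times \bigl(X^{s_2,b}_W \cap C^0_tH^{s_2}_{x,y} \cap C^0_y\cH^{s_2}_{x,t}\bigr)$$
produces a unique local solution on $[0,T]$, with $T$ depending on the five data norms. Continuous dependence follows by running the same estimates on differences, and the continuity--in--$y$ built into the fixed--point space ensures that the boundary traces are genuinely assumed, so the compatibility condition for $s>\frac12$ is automatic. The excluded values $s_1=\frac12$ and $s_2=\frac12$ correspond to the endpoints where the trace operator fails to be bounded and the construction of $D_S,D_W$ breaks down.
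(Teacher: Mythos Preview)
Your overall architecture is right and matches the paper's: extend the data, build Fourier--Laplace boundary correctors, set up a Duhamel system on $\R^2\times\R$, and close a contraction in $X^{s,b}$ using the BHHT angular--decomposition machinery. But there are two linked gaps that would prevent the argument from closing as written.

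First, the boundary operators force $b\le\tfrac12$. In Propositions~\ref{prop:SXsb} and~\ref{prop:KGXsb} the estimates $\|\kappa S_j(k)\|_{X^{s,b}_S}\les\|\chi_{t>0}k\|_{\cH^s_S}$ and the wave analogue only hold for $b\le\tfrac12$; for $b>\tfrac12$ they are false (this is also flagged in the remark after Theorem~\ref{thm:local}). So you cannot write the nonlinear estimates with target space $X^{s,b-1}$ for $b>\tfrac12$ in the usual way; the paper works with $b<\tfrac12$ and target $X^{s,-b}$, and then uses the inhomogeneous estimate \eqref{eq:xs2} with $b_2\le 1-b_1$.

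Second---and this is the more substantive omission---once $b<\tfrac12$, the Kato smoothing for the Duhamel pieces does \emph{not} land cleanly in $\cH^{s}_{x,t}$. Propositions~\ref{prop:DKS} and~\ref{prop:DKW} produce correction terms (an extra $X^{\frac12+,\frac{s_1}2-\frac34}_S$ norm on the Schr\"odinger side for $s_1>\tfrac12$, and an extra $X^{\frac12+,s-1}_W$ or a low--frequency weighted norm on the wave side). Controlling those corrections requires three further bilinear estimates, Propositions~\ref{prop:Scsmooth}, \ref{prop:Wcsmooth}, and~\ref{prop:Wcsmooth2}, and it is \emph{these}, not the ``main'' BHHT estimates, that generate the thresholds you quote. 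In particular $s_1>\tfrac18$ comes from Proposition~\ref{prop:Wcsmooth2} (the bound on $\|\la\xi\ra^{\frac12+}\chi_{|\lambda|\ll\la\xi\ra}\widehat{\overline u_1 u_2}\|_{L^2}$), and $s_1>\tfrac{s_2}2+\tfrac14$ comes from the condition $a<\tfrac{s_1+1}2-s_2$ in Proposition~\ref{prop:Wcsmooth}. Your attributions to $L^4$ Strichartz and Klein--Gordon Strichartz are not where the constraints actually arise; if you only prove the two bilinear estimates you list, the traces $q$ and $z$ in \eqref{def:F}--\eqref{def:z} will not be shown to lie in $\cH^{s}_{x,t}$ and the contraction will not close.
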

\begin{rmk}
We note that  our result  does not cover the full range of indices $(s_1,s_2)$ that was established for the problem on $\R^2$  in \cite{gtv,BHHT}. One reason is that because of the boundary operator we have to take $b\leq\frac12$ in our $X^{s,b}$ spaces, see propositions~\ref{prop:SXsb} and \ref{prop:KGXsb}.. Another is that the
Kato smoothing estimates for the Duhamel term requires a correction for low as well as high regularities, see propositions~\ref{prop:DKS} and \ref{prop:DKW}.  
\end{rmk}

In addition we obtain the following smoothing estimate:
\begin{theorem} \label{thm:smooth} Fix $s_1, s_2$ and initial and boundary data as in the statement of Theorem~\ref{thm:local} and Definition~\ref{def:lwp}. For any   $\alpha<\min(  \frac12,\frac12+s_2,1-s_1+s_2,\frac52-s_2)$ and $\beta<\min( 2s_1-s_2-\frac12,s_1-s_2,\frac{s_1+1}2-s_2,\frac32-s_2)$, the solution  of \eqref{ZS}  satisfies  
$$
u(x,t)-S_0^t(f,k)(x)\in C^0_tH^{s_1+\alpha}_x([0,T]\times U),
$$
and 
$$
n(x,t)-W_0^t(g,h,l)(x)\in C^0_tH^{s_2+\beta}_x([0,T]\times U),$$
where $T$ is the local existence time, and  $ S_0^t(f,k)$ and $ W_0^t(g,h,l)$ are the solution of the corresponding linear equations, see \eqref{def:V1t} and \eqref{def:S0t}. 
\end{theorem}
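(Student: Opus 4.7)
The plan is to reduce the smoothing statement to two refined bilinear estimates in the adapted Bourgain spaces. Writing the Duhamel representation on the half-plane gives
\[
u(t) = S_0^t(f,k) + D_S[nu](t), \qquad n(t) = W_0^t(g,h,l) + D_W[\Delta(|u|^2)](t),
\]
where $D_S$ and $D_W$ are the inhomogeneous-boundary Duhamel operators, consisting of a standard Duhamel integral on $\R^2$ acting on a suitable extension of the nonlinearity together with a Kato-type boundary correction supplied by Propositions~\ref{prop:DKS} and~\ref{prop:DKW}. The desired smoothing statements are therefore equivalent to showing that $D_S[nu]$ lies in $C^0_t H^{s_1+\alpha}_{x,y}(U)$ and $D_W[\Delta(|u|^2)]$ in $C^0_t H^{s_2+\beta}_{x,y}(U)$.

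The key step is to prove the refined bilinear estimates
\begin{align*}
\|nu\|_{X^{s_1+\alpha,-\frac12+}_S} &\lesssim \|n\|_{X^{s_2,\frac12-}_W}\,\|u\|_{X^{s_1,\frac12-}_S}, \\
\||u|^2\|_{X^{s_2+\beta+1,-\frac12+}_W} &\lesssim \|u\|^2_{X^{s_1,\frac12-}_S},
\end{align*}
for all $\alpha,\beta$ strictly below the stated thresholds. These are quantitative refinements of the bilinear estimates used to close the fixed point in Theorem~\ref{thm:local}: one dyadically decomposes the inputs and output in spatial frequency and modulation, following the scheme of \cite{BHHT}, and extracts a small power of the appropriate dyadic scale from the dispersion relations $\tau-|\xi|^2$ and $\tau\pm\langle\xi\rangle$ to produce the derivative gain. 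Non-transverse regimes still require the angular Whitney decompositions already employed in the proof of Theorem~\ref{thm:local}. The four upper bounds defining the admissible range of $\alpha$ (and similarly for $\beta$) correspond to four distinct dominant interaction regimes: high-high to low output resonances, high-low transfers with dominant wave input, high-low transfers with dominant Schr\"odinger input, and high-modulation dominated regions. Once these bounds are available, taking $b$ slightly above $\tfrac12$ in a short-time Duhamel argument and using the embedding $X^{s,b}\hookrightarrow C^0_tH^s$ for $b>\tfrac12$ produces the claimed continuity in time at the elevated regularity.

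The principal obstacle is the endpoint analysis: each of the eight upper bounds in the admissible $(\alpha,\beta)$ region encodes a sharp dispersive interaction and must be treated regime-by-regime while keeping track of the angular localization and of the different Strichartz and wedge estimates available for the Schr\"odinger and Klein-Gordon propagators. A secondary subtlety is that the Kato-type boundary corrections inside $D_S$ and $D_W$ do not admit a purely $X^{s,b}$ bound on the half-plane; they have to be estimated separately in the boundary trace norms $\cH^{s_1+\alpha}_{x,t}$ and $\cH^{s_2+\beta}_{x,t}$ via Propositions~\ref{prop:SXsb} and~\ref{prop:KGXsb}, with enough room in the $X^{s,b}$ side to absorb the transfer from space-time norm to trace norm without losing any of the gain $\alpha$ or $\beta$.
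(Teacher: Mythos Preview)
Your outline captures the right architecture (Duhamel plus boundary correction, bilinear gain in $X^{s,b}$), and the two bilinear estimates you write are precisely Propositions~\ref{prop:Ssmooth} and~\ref{prop:Wsmooth} of the paper. However, two points are off.

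First, those two bilinear estimates do \emph{not} by themselves account for all the thresholds in the theorem. Proposition~\ref{prop:Ssmooth} gives only the first three constraints on $\alpha$, and Proposition~\ref{prop:Wsmooth} gives only the first two constraints on $\beta$. The remaining constraints ($\tfrac{s_1+1}{2}-s_2$ and $\tfrac32-s_2$ for $\beta$, and the analogue for $\alpha$) arise from the \emph{correction terms} in the Kato smoothing estimates for the Duhamel integrals, Propositions~\ref{prop:DKS} and~\ref{prop:DKW}: when $s_1+\alpha>\tfrac12$ or $s_2+\beta>\tfrac12$, those propositions require control of the nonlinearity in an additional norm (e.g.\ $X_S^{\frac12+,\frac{s_1+\alpha}{2}-\frac34}$ or $X_W^{\frac12+,s_2+\beta-1}$), and the paper proves separate nonlinear bounds for these in Propositions~\ref{prop:Scsmooth}, \ref{prop:Wcsmooth}, \ref{prop:Wcsmooth2}. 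Your interpretation that all eight thresholds encode ``sharp dispersive interaction regimes'' in the two bilinear estimates is therefore incorrect; several of them are artifacts of the boundary-value machinery.

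Second, you cannot ``take $b$ slightly above $\tfrac12$'' to invoke $X^{s,b}\hookrightarrow C^0_tH^s$. The boundary operators $S_0^t(0,\cdot)$ and $W_0^t(0,\cdot)$ only admit $X^{s,b}$ bounds for $b\le\tfrac12$ (Propositions~\ref{prop:SXsb}, \ref{prop:KGXsb}); this is exactly the restriction noted in the remark after Theorem~\ref{thm:local}. In the paper, $C^0_tH^{s+\alpha}$ for the boundary correction is obtained not via an $X^{s,b}$ embedding but directly from Proposition~\ref{prop:tracelem}, after first placing the trace data $q,z$ in $\cH^{s_1+\alpha}_S$, $\cH^{s_2+\beta}_W$ using Propositions~\ref{prop:DKS}, \ref{prop:DKW} (which is where the correction terms above enter). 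The propositions you cite for this step (\ref{prop:SXsb}, \ref{prop:KGXsb}) go in the wrong direction.
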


To study the half--plane problem we utilize the restricted norm method of Bourgain  \cite{bourgain}. This continues our work initiated in  \cite{etnls}, \cite{etza} and \cite{EGT_KP}, of  establishing the regularity properties of nonlinear dispersive partial differential equations (PDE) on a half line  using the tools that are available in the case of the whole  line. We thus extend the data to the whole plane and use Laplace transform methods to   set up an equivalent integral equation (on $\R^2\times \R $) of the solution, see \eqref{eq:duhamel} below.  We then analyze the integral equation  using  the restricted norm method as in \cite{collianderkenig,etnls,etza} and multilinear $L^2$ convolution estimates. 
In both the standard nonlinear and boundary terms we need to refine our analysis due to resonances that arise from orthogonality issues. After the extension to the full plane, one can utilize the refined analysis in \cite{BHHT} to establish the well-posedness.  Notice that the upper bounds on regularity is an artifact that depends on the method of first extending the data and then restricting to obtain the solution.
Our result is the first low regularity well--posedness result on the half--plane for the 2d Zakharov.  Concerning uniqueness, the solution we obtain for the integral equation \eqref{eq:duhamel}  is unique. However, we cannot obtain a unique strong solution of the original PDE since our solution is a fixed point of \eqref{eq:duhamel}  that depends on the particular extension we use.  

We now discuss briefly the organization of the paper.  In Section 2,  we  introduce the appropriate function spaces that accommodate the solutions. The choice of the right spaces is crucial to close the argument at any level of   regularity.
We also construct the solutions of the linear problem and set up the Duhamel formula  for the full equation. The Duhamel formula incorporates the extension of the data on $\mathbb R^2$ and the evaluation of certain operators at the zero boundary.  
In Section 3, we obtain the a priori linear estimates that we need in order to put our solutions to the right function spaces. Section 4 is the main part of our paper. Here taking advantage by either directly using or combining the results in \cite{BHHT} we prove the nonlinear estimates that imply existence of solutions. This section also provides the tools needed for the proof of Theorem~\ref{thm:smooth}.  In Section 5, we briefly outline the well--known process of establishing LWP and smoothing using the linear and nonlinear estimates of Sections 3 and 4. The last section, Section 6, is an Appendix where we state and if necessary prove a series of analysis lemmas that we use throughout the paper. We finish the introduction with a notation subsection.
 
\subsection{Notation}

Recall that for $s\in \R$, $H^s(\R^d)$ is the closure of $C_0^\infty(\R^d)$ under the norm 
$$
\|f\|_{H^s}=\|f\|_{H^s(\R^d)}:=\Big(\int_{\R^d} \la \zeta\ra^{2s} |\widehat{f}(\zeta)|^2 d\zeta \Big)^{1/2},
$$
where $\la \zeta\ra:=(1+|\zeta|^2)^{1/2}$ and 
$$\widehat{f}(\zeta)=\cF f(\zeta)=\int_{\R^d} f(x)e^{-ix\cdot\zeta} dx$$ 
is the Fourier transform of $f$. We also set the notation
$$
f(\widehat{\zeta_j})=\int_{\R} f(x)e^{-ix_j\zeta_j}dx_j
$$
for the Fourier transform in the $j$th space coordinate.  

For a space-time function $f$, we set the notation
$$
D_0f(x,t)=f(x,0,t).
$$

For $s> -\frac12$, we define the space  $H^s(U)$   as
$$
 H^s(U) := \big\{  g\in \mathcal D(U): \exists  \tilde g\in H^s(\R^2) \text{ so that } \tilde g \chi_U=g \big\},
$$
with the norm 
$$
\|g\|_{H^s(U)}:=\inf\big\{\|\tilde g\|_{H^s(\R^2)}:  \tilde g \chi_U=g \big\}.
$$
The restriction for $s>-\frac12$ is needed since multiplication with $\chi_U$ is not well-defined for $s\leq -\frac12$. 
We say $\tilde g$ is an $H^s(\R^2)$ extension of $g\in H^s(U)$ if $ \tilde g\chi_U=g $   and $\|\tilde g\|_{H^s(\R^2)}\leq 2 \|g\|_{H^s(U)}$.  
Note that, if $g  \in  H^s(U)$ for some $s>\frac12$, then by trace lemma  any $H^s$ extension  is in $C^0_yL^2_x$, and hence $g(x,0)$ is well defined as an $L^2$ function.

Finally, we use $\la x,y\ra $ to denote $\la (x,y)\ra=\sqrt{1+x^2+y^2}$, and we reserve  the symbol $\kappa $ for a smooth compactly supported function of time which is equal to $1$ on $[-1,1]$.

\section{Notion of a solution} 
We start by presenting solution formulas for the linear initial-boundary value system. 
Applying Laplace and Fourier transform as in \cite{bonaetal,RSY,EGT_KP}, we solve 
linear Schr{\oo}dinger  \eqref{eq:sch_lin}  and linear Klein-Gordon \eqref{eq:wave_lin}   when $f=g=h=0$; see \eqref{def:S0t} and \eqref{def:W0t}, respectively. We skip the details of this straightforward calculation.
\begin{multline}\label{def:S0t}
S_0^t(0,k)= \int_\R \int_{\beta+\eta^2<0}  e^{ix\eta+i\beta t} e^{i  y\sqrt{|\beta+\eta^2|}}\widehat {\chi_{t>0}k}(\eta,\beta) d\beta  d\eta
\\+\int_\R \int_{\beta+\eta^2>0}  e^{ix\eta+i\beta t} e^{-y\sqrt{\beta+\eta^2 }}\widehat {\chi_{t>0}k}(\eta,\beta) d\beta d\eta.
\end{multline}
\begin{multline}\label{def:W0t}
W_0^t(0,\ell)=\int_\R \int_{ \eta^2+1<\beta^2}  e^{ix\eta+i\beta t} e^{i \sgn(\beta) y\sqrt{\beta^2 -\eta^2-1}}\widehat {\chi_{t>0}\ell}(\eta,\beta) d\beta d\eta
\\
+\int_\R \int_{ \eta^2+1>\beta^2}  e^{ix\eta+i\beta t} e^{-y\sqrt{ \eta^2+1-\beta^2}}\widehat {\chi_{t>0}\ell}(\eta,\beta) d\beta d\eta.
\end{multline} 
Note that the first summands in the formulas above are well-defined for all $y\in \R$. To make the second summands  well-defined for $y<0$, we replace the exponential function, $e^{-s}$, in 
the formulas with $\rho(s)=e^{-s}\widetilde \chi_{[-1,\infty)}(s)$, where $\widetilde\chi_{[-1,\infty)}$ is a smooth funtion supported on $[-1,\infty)$ which is equal to 1 on $[-\tfrac12,\infty)$. Note that $\rho$ is a Schwartz function.  We also introduce suitable changes of variables to write 
$S_0^t=S_1+S_2$, $W_0^t=W_1+W_2 $, where 
$$
S_1(k)=2 \int_\R \int_{0}^\infty  e^{i(x,y)\cdot \xi } e^{  -i t |\xi|^2}    \widehat {\chi_{t>0}k}(\xi_1,-|\xi|^2) \  \xi_2 \  d\xi_2d\xi_1  , 
$$ 
$$S_2(k)=2\int_\R \int_{0}^\infty  e^{ix\xi_1}e^{i t (\xi_2^2-\xi_1^2)} \rho(y\xi_2) \widehat {\chi_{t>0}k}(\xi_1,\xi_2^2-\xi_1^2) \  \xi_2 \ d\xi_2d\xi_1 ,
$$ 
$$
W_1(\ell)= \int_{\R^2}   e^{i(x,y)\cdot \xi }  e^{ i t \sgn(\xi_2)\la \xi\ra   }  \widehat {\chi_{t>0}\ell}(\xi_1,\sgn(\xi_2)\la \xi\ra) \frac{|\xi_2| \  d\xi }{\la \xi\ra},
$$
$$
W_2(\ell)=\int_\R \int_{ \eta^2+1>\beta^2 }  e^{ix\eta+i\beta t} \rho(y\sqrt{ \eta^2+1-\beta^2})\widehat {\chi_{t>0}\ell}(\eta,\beta) d\beta d\eta
$$
$$=\int_\R \int_{1+\xi_1^2 >\xi_2^2}  e^{ix\xi_1}e^{i t \sgn(\xi_2)\sqrt{ 1+\xi_1^2-\xi_2^2} } \rho(y|\xi_2|)  \widehat {\chi_{t>0}\ell}\big(\xi_1,\sgn(\xi_2)\sqrt{ 1+\xi_1^2 -\xi_2^2}\big) \frac{|\xi_2| \  d\xi }{\sqrt{1+ \xi_1^2 -\xi_2^2}}.
$$ 
 
We will now define  $S_0^t$ and $W_0^t$ that  solve  the linear Schr\"odinger \eqref{eq:sch_lin} and Klein-Gordon \eqref{eq:wave_lin} with extended non-homogenous  initial  and boundary data (the restriction to the domain is independent of the extension). 
Let $g_{e }$ and $h_{e }$ be extensions of $g$ and $h$ to  $\R^2$  with the property that 
$$
\|g_{e  }\|_{H^{s_2}(\R^2)}\les \|g\|_{H^{s_2}(U)}, \,\,\,\,\,\|h_{e }\|_{ H^{s_2-1}(\R^2)}\les \|h\|_{ H^{s_2-1}(U)}.
$$
We define
$$
\psi_{\pm}(x)=g_{e }(x)\pm i \la \nabla \ra^{-1}  h_{e }(x) \in H^{s_2}(\R^2).
$$
Let $W_0^t(\psi^{\pm},\ell)$ be defined on $\R^2_{x,y}\times \R_t$  by 
\be\label{def:V1t}
W_0^t(\psi^{\pm},\ell)(x,y)  =  \frac12  \left[e^{  -it \la \nabla \ra} \psi_{+}+ e^{ i t \la \nabla \ra} \psi_{-}\right](x,y) +  W_0^t(0, \ell-r)(x,y).
\ee
Here 
\be\label{def:r}
r(x,t)= \frac12 \kappa(t)\left(D_0 e^{    -it \la \nabla \ra} \psi_{+} + D_0 e^{  it \la \nabla \ra} \psi_{-}\right), 
\ee
where $D_0$ is restriction to the set $y=0$.

Note that  the restriction of $W_0^t(\psi^{\pm}, \ell) $ to  $U \times [0,1] $ solves:
\begin{equation}\label{eq:wave_lin}
\left\{
\begin{array}{l} 
n_{tt}+(1-\Delta) n=0,\quad (x,y) \in U,\, t>0,\\ 
n(x,y,0)=g(x,y),\quad n_t(x,y,0)=h(x,y), \quad  n(x ,0,t)=\ell(x ,t).
\end{array}
\right.
\end{equation}
We define $S_0^t(f_e,k)$ similarly, where $f_e$ is an extension of $f$ to $\R^2$. 
\be\label{def:S0t}
S_0^t(f_e,k)=e^{it\Delta}f_e + S_0^t(0, k-D_0e^{it\Delta}f_e ).
\ee
Note that  the restriction of $S_0^t(f_e,k) $ to  $U \times [0,1] $ solves:
\begin{equation}\label{eq:sch_lin}
\left\{
\begin{array}{l} 
iu_t + \Delta u=0,\quad (x,y) \in U,\, t>0,\\ 
u(x,y,0)=f(x,y),\quad    u(x ,0,t)=k(x ,t).
\end{array}
\right.
\end{equation}

In order for the solutions $S_0^t$, $W_0^t$   above to make sense we require $\chi_{t>0}k\in \cH_S^{s_1} $ and $\chi_{t>0}\ell \in \cH_W^{s_2} $, where   
\be\label{normHS}
\mathcal H^{s_1}_S=\{k:\R^2_{x,t}\to \C: \widehat { k}\big(\xi_1,\pm \xi_2^2-\xi_1^2\big) \, \xi_2 \la  \xi\ra^{s_1} \in L^2_{ \xi} \}
\ee
\be\label{normHW}
\mathcal H^{s_2}_W=\{\ell:\R^2_{x,t}\to\C: \widehat {\ell}\big(\xi_1,\xi_2) \,  |\xi_2^2-\xi_1^2-1|^{\frac14} \la \xi\ra^{s_2-\frac12} \in L^2_{ \xi} \}.
\ee
We can rewrite the second definition as  follows by considering the regions $\xi_2^2>\xi_1^2+1$ and $\xi_2^2\leq \xi_1^2+1$ separately:
\begin{multline}\label{normHW1}
\mathcal H^{s_2}_W=\{\ell:\R^2_{x,t}\to\C: \widehat {\ell}\big(\xi_1,\sgn(\xi_2) \la \xi\ra\big) \,  \xi_2 \la \xi\ra^{s_2-1} \in L^2_{ \xi} \} 
\\ \cap  \{\ell:\R^2\to\C: \widehat {\ell}\big(\xi_1,\sgn(\xi_2) \sqrt{ 1+\xi_1^2-\xi_2^2 }\big) \, \frac {\xi_2 \la  \xi\ra^{s_2-\frac12}}{(1+ \xi_1^2 -\xi_2^2 )^{\frac14}} \chi_{\xi_2^2<\xi_1^2+1 } \in L^2_{ \xi} \}.
\end{multline}
  
 The norms $\mathcal H^{s_1}_S(U)$   and $\mathcal H^{s_2}_W(U)$ are defined analogously.
In the next section we will prove   Kato smoothing estimates, see 
Proposition~\ref{prop:KSS} and Proposition~\ref{prop:KSKG}, that implies that these spaces are  the natural choice
for the boundary data. In particular, we will conclude that  $S_0^t(f_e,k)\in C^0_t H^{s_1}(U)$ and $W_0^t(\psi^{\pm},\ell)\in C^0_t  H^{s_2}(U)$.

 We now establish embedding and extension properties of these spaces.       
\begin{lemma}\label{lem:embedding}
For $s>1$, the space $\mathcal H^s_{S}$ and $\mathcal H^s_{W}$  embed  continuously into $C^0_{x,t}$. Moreover, for $s>\frac12$, we have the trace lemma; the spaces $\mathcal H^s_{S}$ and $\mathcal H^s_{W}$ embed  continuously into $C^0_tL^2_x$, and  in particular $\sup_t \|\varphi\|_{ L^2_x}\les \| \varphi\|_{\mathcal H^s_{S}}$, $\sup_t \|\varphi\|_{ L^2_x}\les \| \varphi\|_{\mathcal H^s_{W}}$.
\end{lemma}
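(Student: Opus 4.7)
The plan is to rewrite both norms as weighted $L^2$ norms on the full Fourier plane and then apply Cauchy--Schwarz with a carefully chosen weight. For $\mathcal H^{s_1}_S$, the map $(\xi_1,\xi_2)\mapsto (\xi_1,\pm\xi_2^2-\xi_1^2)$ is $2$-to-$1$ onto each of the half-planes $\{(\eta,\tau):\tau\gtrless -\eta^2\}$. A Jacobian computation yields
\[
\|k\|_{\mathcal H^{s_1}_S}^2 \sim \int_{\R^2} |\widehat k(\eta,\tau)|^2\, m_S(\eta,\tau)\, d\eta\, d\tau,
\]
with $m_S(\eta,\tau)=\sqrt{\tau+\eta^2}\,(1+2\eta^2+\tau)^{s_1}$ on $\{\tau>-\eta^2\}$ and $m_S(\eta,\tau)=\sqrt{-\tau-\eta^2}\,(1-\tau)^{s_1}$ on $\{\tau<-\eta^2\}$. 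The $\mathcal H^{s_2}_W$ norm is already in the weighted form $\int |\widehat\ell|^2 m_W\, d\xi$ with $m_W(\xi)=|\xi_2^2-\xi_1^2-1|^{1/2}\la\xi\ra^{2s_2-1}$.

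For the $C^0_{x,t}$ embedding at $s>1$, by Fourier inversion it suffices to show $\widehat k,\widehat\ell\in L^1(\R^2)$; by Cauchy--Schwarz this reduces to verifying $\int m^{-1}\, d\xi<\infty$. The substitution $u=\sqrt{|\tau+\eta^2|}$ converts $\int m_S^{-1}\, d\eta\, d\tau$ into $\iint(1+\eta^2+u^2)^{-s_1}\, du\, d\eta$, finite for $s_1>1$. For $m_W$, I split on $\xi_2^2\gtrless \xi_1^2+1$: above the mass shell the substitution $\beta=\sqrt{\xi_2^2-\xi_1^2-1}$ produces an integral of the same shape; below, the substitution $\xi_2=\sqrt{\xi_1^2+1}\,\sin\theta$ absorbs the square-root singularity coming from the numerator $|\xi_2^2-\xi_1^2-1|^{-1/2}$ and leaves $\int \la\xi_1\ra^{1-2s_2}\, d\xi_1$, finite for $s_2>1$.

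For the trace statement at $s>1/2$, fix $\eta$ and apply Cauchy--Schwarz in $\tau$ to $|\widetilde k(\eta,t)|\le (2\pi)^{-1}\int|\widehat k(\eta,\tau)|\, d\tau$, where $\widetilde k$ denotes the partial Fourier transform in $x$ only. The same substitutions show $\int m(\eta,\tau)^{-1}\, d\tau \lesssim \la\eta\ra^{1-2s}$, which is uniformly bounded once $s\ge 1/2$. Hence
\[
\int_\R \sup_{t\in\R} |\widetilde k(\eta,t)|^2\, d\eta \lesssim \|k\|_{\mathcal H^s_S}^2,
\]
and Plancherel in $x$ gives the pointwise-in-$t$ bound on $\|k(\cdot,t)\|_{L^2_x}$. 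Continuity into $L^2_x$ follows from dominated convergence: $\widehat k(\eta,\cdot)\in L^1_\tau$ for a.e.\ $\eta$ makes $\widetilde k(\eta,\cdot)$ continuous in $t$ for a.e.\ $\eta$, while the integrable majorant $4\sup_t |\widetilde k(\eta,t)|^2$ controls $|\widetilde k(\eta,t_n)-\widetilde k(\eta,t)|^2$. The same argument applies verbatim to $\mathcal H^s_W$.

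The hardest step is the $m_W^{-1}$ computation: the singular factor $|\xi_2^2-\xi_1^2-1|^{-1/2}$ lives on the entire mass-shell hyperboloid rather than a single curve, so naive integration fails. Splitting above and below the shell, combined with the trigonometric substitution below it, is what tames the singularity; this is exactly the point at which the weight $|\xi_2^2-\xi_1^2-1|^{1/4}$ in the definition of $\mathcal H^{s_2}_W$ is seen to be of the correct strength.
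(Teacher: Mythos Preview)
Your argument is correct and follows essentially the same strategy as the paper: reduce both embeddings via Cauchy--Schwarz to the finiteness of $\int m^{-1}$ (respectively $\sup_{\eta}\int m(\eta,\cdot)^{-1}$), then verify these bounds directly. The only cosmetic difference is that you handle the $m_W^{-1}$ integral by explicit substitutions (square root above the mass shell, trigonometric below), whereas the paper simply splits into the regions $|\xi_2|\le 2\la\xi_1\ra$ and $|\xi_2|>2\la\xi_1\ra$ and estimates each crudely; both yield the same $\la\xi_1\ra^{1-2s}$ bound.
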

\begin{proof} This is similar to Lemma~2.1 from \cite{EGT_KP}. We provide the proof only for the wave part,  $\mathcal H^s_{W}$.
 
For the first claim, it suffices to prove that given $\varphi \in \mathcal H^s_W$,
$\|\widehat\varphi\|_{L^1(\R^2)}\les \| \varphi\|_{\mathcal H^s_W}$.
By the Cauchy-Schwarz inequality this follows from
$$
\int_{\R^2}\frac{1}{|\xi_2^2-\xi_1^2-1|^{\frac12} \la \xi\ra^{2s -1}}\ d\xi  \les 1,
$$
which holds for  $s>1$ by considering the regions $|\xi_2| \leq 2\la\xi_1\ra$, $|\xi_2|> 2\la\xi_1\ra$  separately.  

Similarly, for the second claim it suffices to prove that
$\|\widehat\varphi\|_{L^2_{\xi_1} L^1_{\xi_2} }\les \| \varphi\|_{\mathcal H^s_W}$. By the Cauchy-Schwarz inequality in the $\xi_2$ integral this follows from
$$
\sup_{\xi_1} \int_{\R} \frac{1}{|\xi_2^2-\xi_1^2-1|^{\frac12} \la \xi\ra^{2s -1}}d\xi_2 \les 1,
$$
which holds for $s>\frac12$ by the same decomposition of the regions.
\end{proof}
Next, we prove that under usual regularity conditions, multiplication with a characteristic function in time is a continuous operator on these spaces.   
\begin{lemma}\label{lem:extend}
For $-\frac32<s<\frac12$, we have 
$$
\|\chi_{t>0} \varphi(x,t)\|_{\mathcal H^s_{S} }\les \| \varphi \|_{\mathcal H^s_S (U)}.  
$$ 
Moreover, for $\frac12<s<\frac52$, we have the same bound  provided that the trace $\varphi(x,0)$ is zero.

Analogously, for $-\frac12<s<\frac12$, we have 
$$ \|\chi_{t>0} \varphi(x,t)\|_{\mathcal H^s_{W} }\les \| \varphi \|_{\mathcal H^s_W (U)}.
$$ 
Moreover, for $\frac12<s<\frac32$, we have the same bound  provided that the trace $\varphi(x,0)$ is zero.

\end{lemma}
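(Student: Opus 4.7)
I plan to recast both inequalities on the Fourier side as the boundedness of a partial Hilbert transform in the time-frequency variable on a weighted $L^2$ space, and then verify a Muckenhoupt $A_2$ condition. The norm $\|\cdot\|_{\mathcal{H}^s_W}$ is already an $L^2$ of $\widehat\ell$ weighted by $w_W(\xi)=|\xi_2^2-\xi_1^2-1|^{1/2}\langle\xi\rangle^{2s-1}$, and after the change of variables $\tau=\pm\xi_2^2-\xi_1^2$ the Schr\"odinger norm becomes an $L^2$ of $\widehat k(\xi_1,\tau)$ weighted by $w_S(\xi_1,\tau)=|\tau+\xi_1^2|^{1/2}(\langle\xi_1\rangle^2+|\tau+\xi_1^2|)^s$. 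Multiplication by $\chi_{t>0}$ on the physical side corresponds, up to a constant, to the operator $\tfrac12(I-iH)$ on the Fourier side, where $H$ is a partial Hilbert transform in the variable dual to $t$ (with the $x$-frequency $\xi_1$ as a parameter). By Fubini it suffices to verify $A_2$ for the appropriate one-dimensional weight with $A_2$-constant uniform in $\xi_1$.

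For the wave case, fix $\xi_1$, set $\omega=\sqrt{1+\xi_1^2}$, and rescale $\eta=\xi_2/\omega$; a direct computation yields
\[
w_W(\xi_1,\xi_2)\,d\xi_2 \;=\; \omega^{2s+1}\,|\eta^2-1|^{1/2}(1+\eta^2)^{(2s-1)/2}\,d\eta,
\]
so the $\eta$-weight is $\xi_1$-free. Its local $A_2$-exponent at $\eta=\pm 1$ is $\tfrac12$ (always admissible) and its asymptotic exponent at $|\eta|\to\infty$ is $2s$, so $A_2$ holds uniformly in $\xi_1$ exactly for $-\tfrac12<s<\tfrac12$. Muckenhoupt's theorem then gives boundedness of $\chi_{t>0}$-multiplication on $\mathcal{H}^s_W(\R^2)$, and the restriction estimate follows by extending $\varphi\in\mathcal{H}^s_W(U)$ to $\R^2$ with comparable norm and applying the multiplier bound. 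The Schr\"odinger case is parallel: after shifting $\mu=\tau+\xi_1^2$ and rescaling $\mu=\langle\xi_1\rangle^2\nu$, the weight becomes $\langle\xi_1\rangle^{2s+3}|\nu|^{1/2}(1+|\nu|)^s\,d\nu$, again $\xi_1$-free in $\nu$, and its asymptotic exponent $s+\tfrac12$ lies in $(-1,1)$ precisely when $-\tfrac32<s<\tfrac12$.

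For the higher-regularity ranges the asymptotic $A_2$-exponent leaves $(-1,1)$ and the argument above fails; here the vanishing-trace hypothesis is exploited via a derivative reduction. I would use the norm equivalences
\[
\|\ell\|_{\mathcal{H}^s_W}^2 \;\sim\; \|\ell\|_{\mathcal{H}^{s-1}_W}^2 + \|\partial_x\ell\|_{\mathcal{H}^{s-1}_W}^2 + \|\partial_t\ell\|_{\mathcal{H}^{s-1}_W}^2,
\]
\[
\|k\|_{\mathcal{H}^s_S}^2 \;\sim\; \|(1-\partial_x^2)k\|_{\mathcal{H}^{s-2}_S}^2 + \|(i\partial_t+\partial_x^2)k\|_{\mathcal{H}^{s-2}_S}^2,
\]
which follow from $\langle\xi\rangle^2=1+\xi_1^2+\xi_2^2$ and, on the Schr\"odinger paraboloid, $\langle\xi\rangle^4\sim(1+\xi_1^2)^2+\xi_2^4$. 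The spatial operators commute with $\chi_{t>0}$ automatically; for the time derivative one has $\partial_t(\chi_{t>0}\varphi)=\chi_{t>0}\partial_t\varphi+\varphi(x,0)\otimes\delta_0$, whose singular term vanishes under the trace hypothesis $\varphi(x,0)=0$. Each summand on the right then has regularity in the low window $(-\tfrac12,\tfrac12)$ respectively $(-\tfrac32,\tfrac12)$ already handled in the previous step, and reassembly closes the estimate.

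The main obstacle, in my view, is ensuring uniformity of the $A_2$ constant in the parameter $\xi_1$; that is precisely the role of the explicit rescalings $\xi_2=\omega\eta$ and $\mu=\langle\xi_1\rangle^2\nu$, which package all $\xi_1$-dependence into a scalar prefactor. A secondary subtlety is selecting the correct operators for the Schr\"odinger norm equivalence: a naive $\partial_t$-based equivalence fails because $\tau$ vanishes on the $+$-branch of the paraboloid along $\xi_1=\pm\xi_2$, so one must use the characteristic derivative $i\partial_t+\partial_x^2$, whose Fourier symbol equals $\pm\xi_2^2$ on the paraboloid and recovers the missing $\langle\xi\rangle$-component.
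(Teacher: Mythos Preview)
Your proposal is correct and follows essentially the same approach as the paper: recast multiplication by $\chi_{t>0}$ as a partial Hilbert transform in the time-frequency variable, reduce to a one-dimensional $A_2$ condition by rescaling out the $\xi_1$-dependence, and for the higher-regularity ranges use a derivative reduction exploiting that $\partial_t(\chi_{t>0}\varphi)=\chi_{t>0}\partial_t\varphi$ under the zero-trace hypothesis. The paper carries out only the wave case explicitly, performing the same rescaling $\xi_2\mapsto \xi_2/\la\xi_1\ra$ (phrased as dilation invariance of the $A_2$ constant) and then checking $A_2$ by hand on four ranges of intervals; for the high-regularity step it splits the multiplier $\la\xi\ra$ as $(1+|\xi_1|)+|\xi_2|$ rather than your $\la\xi\ra^2=\la\xi_1\ra^2+\xi_2^2$, but the mechanism is identical. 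Your explicit treatment of the Schr\"odinger case, in particular the choice of the characteristic operator $i\partial_t+\partial_x^2$ (whose symbol is $\mp\xi_2^2$ on the paraboloid) to avoid the vanishing of $\tau$ along $\xi_1=\pm\xi_2$, is a clean detail that the paper leaves to the reader.
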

\begin{proof} This is similar to Lemma~2.2 from \cite{EGT_KP}. We provide the proof only for the wave part,  $\mathcal H^s_{W}$.
Since $\mF\big(\chi_{t>0}  \varphi\big)(\xi_1,\xi_2) = H \widehat\varphi(\xi_1,\xi_2) ,$
where $H$ is essentially the Hilbert transform in the $\xi_2$ variable:
$$ Hf(\xi_1,\xi_2)= \mF_t\big(\chi_{t>0} f(\xi_1,t^\vee)\big)(\xi_2),$$ 
It suffices to prove that 
$$m(\xi)=  |\xi_2^2-\xi_1^2-1|^{\frac12} \la \xi\ra^{2s -1}\sim ||\xi_2| -\la \xi_1\ra|^{\frac12}  (|\xi_2| +\la \xi_1\ra)^{2s -\frac12}$$ is an $A_2$ weight in $\xi_2$ uniformly in $\xi_1$ for $-\frac12<s<\frac12$, see \cite{JD}. Recalling that the $A_2$ constant is invariant under dilations  and scaling, we can replace $m$ with $||\xi_2| -1|^{\frac12}  (|\xi_2| +1)^{2s -\frac12}$. 

Since $m$ is even, it suffices to consider intervals $[a,b]$: $0<a<b<\infty$. When $0<a<b\ll 1$, we have $m\sim 1$, which is an $A_2$ weight. 
Similarly, when $1\ll a<b$, we have $m\sim \la \xi_2\ra^{2s}$, which is an $A_2$ weight for $-\frac12<s<\frac12$. When $a,b\sim 1$, we have $m\sim |1- \xi_2|^{\frac12}$, which is also an $A_2$ weight. It remains to consider the case $0<a\ll1\ll b$. We estimate
$$
\int_a^b m \les b^{2s+1}\sim (b-a)^{2s+1},\,\,\,\, \int_a^b m^{-1} \les b^{-2s+1}\sim (b-a)^{-2s+1} 
$$
 for $-\frac12<s<\frac12$. Therefore, $m$ is an $A_2$ weight. 
 
For the second part, we note that 
$\|\chi_{t>0}  \varphi \|_{\mathcal H^s_W(\R^2)} =\|T\big(\chi_{t>0}  \varphi \big)\|_{\mathcal H^{s-1}_{W}(\R^2)}$, where $T$ is the multiplier operator with the multiplier $\la \xi\ra $. Furthermore
$$
\|T\big(\chi_{t>0}   \varphi\big)\|_{\mathcal H^{s-1}_{W}(\R^2)}\les \|T_1\big(\chi_{t>0}   \varphi\big)\|_{\mathcal H^{s-1}_{W}(\R^2)} + \|T_2\big(\chi_{t>0} \varphi\big)\|_{\mathcal H^{s-1}_{W}(\R^2)},
$$
where $T_1$ and $T_2$ are multiplier operators with multipliers $1+|\xi_1|$ and $\xi_2$, respectively. Note that $T_1$ commutes with multiplication by  $\chi_{t>0}$. Since $\varphi$ has trace zero, $\partial_t\big(\chi_{t>0}  \varphi\big) =\chi_{t>0}   \partial_t  \varphi$ in the sense of distributions. Therefore, we also have   $T_2\big(\chi_{t>0}   \varphi\big)= \chi_{t>0}  T_2\varphi$. Also using the first part for $s-1$, we obtain
 $$
\|T\big(\chi_{t>0}   \varphi\big)\|_{\mathcal H^{s-1}_{W}(\R^2)}\les \|T_1  \varphi \|_{\mathcal H^{s-1}_{W}(U)} + \|T_2   \varphi \|_{\mathcal H^{s-1}_{W}(U)} \les \|\varphi \|_{\mathcal H^{s}_{W}(U)}.
$$
\end{proof}

Now we write a system of integral equations equivalent to  \eqref{ZS} on $[0,T]$, $T<1$:
\begin{equation}\label{eq:duhamel} \left\{
\begin{array}{l}
u(t)= \kappa(t) S_0^t\big(f_e, k \big) -i \kappa(t/T) \int_0^t e^{i(t- t^\prime)\Delta}   F(u,n) \,d t^\prime  +i\kappa(t) S_0^t\big(0,  q  \big), \\
n(t)= \kappa(t) W_0^t\big(\psi^{\pm}, \ell \big) + \frac12\kappa(t/T) (n_+ +  n_-) -\frac12 \kappa(t) W_0^t(0,z), 
\end{array}\right.
\end{equation}
where
\be \label{def:F}
F(u,n)=\kappa(t ) n u,\,\,\,\, \text{ and }\,\, q(t)=  \kappa(t ) D_0\Big(\int_0^t e^{i(t-t^\prime)\Delta}  F(u,n)\, d t^\prime \Big).
\ee
\be\label{def:npm}
n_\pm =   \pm  \int_0^t e^{\mp i (t-t^\prime)\la \nabla \ra} G(u )dt^\prime,\,\,\,\,G(u ):=-\kappa(t )   \frac{\Delta}{\la \nabla \ra}  |u|^2-\kappa(t )   \frac{1}{\la \nabla \ra} n, 
\ee
\be\label{def:z}
z(t)=  \kappa(t) D_0(n_++n_-) .
\ee

To establish the local well-posedness of \eqref{eq:duhamel}, we work with the following Bourgain spaces on $\R^2\times \R$
$$
\|u\|_{X_S^{s_1,b}}=\big\| \widehat u(\xi ,\tau) \la \tau + |\xi|^2  \ra^b \la \xi \ra^{s_1}  \big\|_{L^2_{\xi, \tau}}=\|e^{-it\Delta}u\|_{H^{s_1}_{x,y}H^b_t},
$$
$$
\|n\|_{X_W^{s_2,b}}=\inf_{n=n_+ + n_-} \left(\|n_+\|_{X_{W,+}^{s_2,b}}+ \|n_-\|_{X_{W,-}^{s_2,b}} \right),
$$
where
$$
\|n\|_{X_{W,\pm}^{s_2,b}}=\big\| \widehat n(\xi, \tau) \big\la \tau \pm \la\xi\ra   \big\ra^b \la \xi \ra^{s_2}  \big\|_{L^2_{\xi,\tau}}=\|e^{\pm it\la \nabla\ra }n\|_{H^{s_2}_{x,y}H^b_t}
$$

 We  recall the embedding $X^{s,b}\subset C^0_t H^{s} $ for $b>\frac{1}{2}$ and the following inequalities from \cite{bourgain,gtv,etbook}. 

For any $s,b$ we have
\begin{equation}\label{eq:xs1}
\|\eta(t)W_{\R^2} g\|_{X^{s,b}}\les \|g\|_{H^s}.
\end{equation}
For any $s\in \mathbb R$,  $0\leq b_1<\frac12$, and $0\leq b_2\leq 1-b_1$, we have
\begin{equation}\label{eq:xs2}
\Big\| \eta(t) \int_0^t W_\R(t-t^\prime)  F(t^\prime ) dt^\prime \Big\|_{X^{s,b_2} }\lesssim   \|F\|_{X^{s,-b_1} }.
\end{equation}
Moreover, for $T<1$, and $-\frac12<b_1<b_2<\frac12$, we have
\begin{equation}\label{eq:xs3}
\|\eta(t/T) F \|_{X^{s,b_1}}\les T^{b_2-b_1} \|F\|_{X^{s,b_2}}.
\end{equation}
 
\section{A priori linear estimates}\label{sec:lin}

In this section we establish a priori linear estimates that are crucial for the wellposedness theory. In particular, we obtain   Kato smoothing  estimates for the linear groups and Duhamel integrals, and $X^{s,b}$ space estimates for the boundary operators.
\begin{prop}\label{prop:SXsb} (Schr\"odinger $X^{s,b}$ Estimate) For $s \geq 0$ and $ b \leq  \frac12$, we have 
$$
\|\kappa(t) S_j(k)\|_{X_S^{s  ,b}}\les \|\chi_{t>0}k\|_{\mathcal H^{s  }_S},\quad j=1,2.
$$
\end{prop}
\begin{proof} For $S_1$ the claim is immediate from the definition of $\mathcal{H}_S^{s }$ (using the ``-'' sign in the norm) since
\[
\mathcal F(\kappa S_1(k))(\xi, \tau)=2 \xi_2 \chi_{\xi_2 > 0} \widehat{\kappa}(\tau + |\xi|^2) \widehat {\chi_{t>0}k}(\xi_1, -|\xi|^2) . 
\]
 For $S_2$, we calculate
\begin{align*}
\mathcal F(\kappa S_2(k))(\xi ,\tau)
& =2\int_0^\infty \widehat{\kappa}(\tau+\xi_1^2-\eta^2) \widehat{\rho}(\tfrac{\xi_2}{\eta})\widehat {\chi_{t>0}k}(\xi_1,\eta^2-\xi_1^2) d\eta\\
&= \int_0^\infty \widehat{\kappa}(\tau+\xi_1^2-\eta^2) \widehat{\rho}(\tfrac{\xi_2}{\eta}) \frac{1}{\eta \la \xi_1,\eta \ra^{s   }  }K(\xi_1,\eta) d\eta,
\end{align*}
where
$$
\|K\|_{L^2 }\les \|\chi_{t>0}k\|_{\mathcal H^{s   }_S}.
$$
Therefore, the $X_S^{s , b}$ norm is
\[
\Big\| \la \xi \ra ^{s }
\int_0^\infty \widehat{\kappa}(\tau+\xi_1^2-\eta^2) \la \tau + |\xi|^2  \ra^{b}  \widehat{\rho}(\tfrac{\xi_2}{\eta})\frac{1}{\eta \la \xi_1,\eta \ra^{s  } }K(\xi_1,\eta) d\eta\Big\|_{L^2_{\xi,\tau}}.
\]
Since $\kappa,\rho$ are Schwartz functions, we may bound this, for any constant $M > 0$, by
\be\label{kappaK}
\left\| \la \xi \ra ^{s }
\int_0^\infty \frac{\la \tau + |\xi|^2  \ra^{b}}{\la \tau+\xi_1^2-\eta^2\ra^{100}} \  \frac{|\eta|^{M-1}}{\eta^M+\xi_2^M}\  \frac{|K(\xi_1,\eta)|}{\la \xi_1,\eta \ra^{s   }}\ d\eta
\right\|_{L^2_{\xi, \tau}}. 
\ee

When $\eta^2+\xi_2^2<1$, we have 
$\la \xi  \ra^{s } 
\lesssim 
\la  \xi_1,\eta \ra^{s   }$
 and
$\la \tau + |\xi|^2 \ra 
\sim 
\la \tau+\xi_1^2-\eta^2\ra$. 
Taking $M=2$ and moving the $L^2_{\xi_1,\tau}$ norm inside the integral in \eqref{kappaK}, we obtain the bound
\begin{multline}\label{magic}
  \Big\|
\int_\R    \frac{|\eta|}{ \eta^2+\xi_2^2  }   \|K(\xi_1,\eta)\|_{L^2_{\xi_1}} d\eta\Big\|_{L^2_{ \xi_2  }}
 =\Big\|
\int_\R    \frac{|r|}{ r^2+1  }   \|K(\xi_1,r \xi_2)\|_{L^2_{\xi_1}} dr\Big\|_{L^2_{ \xi_2  }}
\\
 \leq \int_\R    \frac{|r|}{ r^2+1  }   \|K(\xi_1,r \xi_2)\|_{L^2_{\xi }}  dr \\
  \leq \int_\R    \frac{|r|^{\frac12}}{ r^2+1  }   \|K( \xi)\|_{L^2_{\xi }} dr\les\|K\|_{L^2 }\les \|\chi_{t>0}k\|_{\mathcal H^{s   }_S}.
\end{multline}

When $\eta^2+\xi_2^2>1$,   we consider first the case that $|\xi_2| \lesssim |\xi_1|$ or $|\xi_2| \lesssim |\eta|$. In this case we have $\la \xi  \ra^{s_1}   \lesssim \la \xi_1,\eta \ra ^ {s_1  }$. Again taking $M=2$ in \eqref{kappaK}, and then integrating in $\xi_2$, we obtain the bounds
\begin{multline}\label{Keta2b}
 \les \Big\|
\int_0^\infty \frac{1}{\la \tau-\eta^2+\xi_1^2\ra^{100-b}}  \frac{|\eta|}{(\eta^2+\xi_2^2)^{1-b}}   |K(\xi_1,\eta)| d\eta\Big\|_{L^2_{\xi ,\tau}} \\
 \\ \leq  \Big\|
\int_0^\infty \frac{|\eta|^{2b -  \frac12}}{\la \tau-\eta^2+\xi_1^2\ra^{10}}      |K(\xi_1,\eta)|  d\eta\Big\|_{L^2_{\xi_1, \tau}} .
\end{multline}
For the part of the integral where $\eta \in [0,1]$, bounding this by $\| K \|_{L^2 }$ is trivial. For $\eta > 1$, since $b\leq \frac12$, we have the bound
\[ \Big\|
\int_1^\infty \frac{|\eta|^{ \frac12}}{\la \tau-\eta^2+\xi_1^2\ra^{10}}    |K(\xi_1,\eta)| d\eta\Big\|_{L^2_{\xi_1, \tau}}. 
\]
Using Lemma~\ref{lem:chvar} we bound this by $\|K\|_{L^2 }$, as desired. 

It remains to address the case when $\eta^2 + \xi_2^2 > 1$ and $|\xi_2| \gg |\eta|, |\xi|$. 
In this case, we see that \eqref{kappaK} can be bounded by 
\begin{align*}
\left\|
\int_0^\infty \frac{1}{\la \tau-\eta^2+\xi_1^2\ra^{100 - b}}  \frac{|\eta|^{M-1}}{|\xi_2|^{M - s  - 2b}}  \frac{|K(\xi_1,\eta)|}{\la \xi_1,\eta\ra^{s  }} d\eta
\right\|_{L^2_{\xi ,\tau}}. 
\end{align*}
Taking the $L^2_{\xi_2}$ norm, keeping in mind that $|\xi_2| \gg |\eta|, |\xi|$, we obtain
\begin{align*}
\left\|
\int_0^\infty \frac{1}{\la \tau-\eta^2+\xi_1^2\ra^{10 }}  \frac{|\eta|^{M-1}}{\la \xi_1,\eta \ra^{M - s  - 2b - \frac12}}  \frac{|K(\xi_1,\eta)|}{\la  \xi_1,\eta \ra^{s   }} d\eta
\right\|_{L^2_{\xi_1,\tau}} \\
\lesssim
\left\|
\int_0^\infty \frac{1}{\la \tau-\eta^2+\xi_1^2\ra^{10 }}  \frac{|\eta|^{M-1}}{\la \eta \ra^{M - 2b - \frac12  }} |K(\xi_1,\eta)| d\eta
\right\|_{L^2_{\xi_1,\tau}}. 
\end{align*}
This is bounded by $\| K\|_{L^2_{\eta, \xi}}$ by the bound for \eqref{Keta2b} above.  
\end{proof}

\begin{prop}\label{prop:KGXsb} (Klein-Gordon $X^{s,b}$ Estimate) For any $s  \geq -\frac12$ and $b\leq \frac12$, we have 
$$
\|\kappa(t) W_j(\ell)\|_{X_W^{s  ,b}}\les \|\chi_{t>0}\ell\|_{\mathcal H^{s  }_W},\quad j=1,2.
$$
\end{prop}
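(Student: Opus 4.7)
My plan is to mirror the structure of the proof of Proposition~\ref{prop:SXsb}, adapting the calculation to the Klein-Gordon dispersion relation $\sgn(\xi_2)\la\xi\ra$ on the ``$W_1$-side'' and $\sgn(\xi_2)\sqrt{1+\xi_1^2-\xi_2^2}$ on the ``$W_2$-side''. The first main departure from the Schr\"odinger case is that $X_W^{s,b}$ is defined as an infimum over decompositions $n=n_++n_-$, so I will produce such a decomposition explicitly by splitting the inner integration variable into its positive and negative parts: the $\xi_2'>0$ piece (phase $e^{it\sqrt{1+\xi_1^2-(\xi_2')^2}}$, or $e^{it\la\xi\ra}$ for $W_1$) will go into $n_-$, and the $\xi_2'<0$ piece into $n_+$.

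For $W_1$, the argument is immediate in analogy with the treatment of $S_1$. Taking the space-time Fourier transform,
\[
\mathcal F(\kappa W_1(\ell))(\xi,\tau) =\wh\kappa(\tau-\sgn(\xi_2)\la\xi\ra)\,\wh{\chi_{t>0}\ell}(\xi_1,\sgn(\xi_2)\la\xi\ra)\,\frac{|\xi_2|}{\la\xi\ra},
\]
so for the $\xi_2>0$ contribution I place the factor $\la\tau-\la\xi\ra\ra^{b}|\wh\kappa(\tau-\la\xi\ra)|$ inside $L^1_\tau$ (which is bounded for any $b$ since $\kappa$ is Schwartz) and recognize what remains as exactly the first component of the norm \eqref{normHW1}, namely $\wh\ell(\xi_1,\la\xi\ra)\xi_2\la\xi\ra^{s-1}\in L^2_\xi$. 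The $\xi_2<0$ contribution is handled symmetrically and placed in $n_+$.

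The substantive case is $W_2$. Computing the Fourier transform, using $\int\rho(y\xi_2')e^{-iy\xi_2}dy=|\xi_2'|^{-1}\wh\rho(\xi_2/\xi_2')$, gives
\[
\mathcal F(\kappa W_2(\ell))(\xi,\tau)=\int_{|\xi_2'|<\la\xi_1\ra}\wh\kappa\!\bigl(\tau-\sgn(\xi_2')\sqrt{1+\xi_1^2-(\xi_2')^2}\bigr)\wh\rho(\xi_2/\xi_2')\,\frac{K(\xi_1,\xi_2')\,d\xi_2'}{\xi_2'\la\xi_1,\xi_2'\ra^{s-\frac12}(1+\xi_1^2-(\xi_2')^2)^{\frac14}},
\]
where $K(\xi_1,\xi_2'):=\wh{\chi_{t>0}\ell}(\xi_1,\sgn(\xi_2')\sqrt{1+\xi_1^2-(\xi_2')^2})\cdot\xi_2'\la\xi_1,\xi_2'\ra^{s-\frac12}(1+\xi_1^2-(\xi_2')^2)^{-\frac14}$ satisfies $\|K\|_{L^2}\les\|\chi_{t>0}\ell\|_{\mathcal H^s_W}$ by the second component of \eqref{normHW1}. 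Restricting to $\xi_2'>0$ to build $n_-$, the Schwartz decay of $\kappa,\rho$ lets me dominate $|\mathcal F(\kappa W_2(\ell))|$ by an integral against $\la\tau-\sqrt{1+\xi_1^2-(\xi_2')^2}\ra^{-100}$ and $|\eta|^{M-1}((\xi_2')^M+\xi_2^M)^{-1}$ for any large $M$, in direct parallel with \eqref{kappaK}.

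I then case-split as in the Schr\"odinger proof. In the regime $(\xi_2')^2+\xi_2^2<1$ I use $\la\xi\ra\sim\la\xi_1,\xi_2'\ra$ and the identity $\la\tau-\la\xi\ra\ra\sim\la\tau-\sqrt{1+\xi_1^2-(\xi_2')^2}\ra$ (valid since both $|\xi_2|,|\xi_2'|\les 1$ move $\la\xi\ra$ and $\sqrt{1+\xi_1^2-(\xi_2')^2}$ only by $O(1)$ around $\la\xi_1\ra$), then run the exact change-of-variables $r=\xi_2/\xi_2'$ from \eqref{magic}. In the regime $(\xi_2')^2+\xi_2^2>1$ with $|\xi_2|\les|\xi_1|$ or $|\xi_2|\les|\xi_2'|$, I use $\la\xi\ra^s\les\la\xi_1,\xi_2'\ra^s$, integrate in $\xi_2$ first (the power of $\xi_2'$ absorbs $b\leq\frac12$), and reduce to an $L^2_{\xi_1,\tau}$ estimate of the form handled by Lemma~\ref{lem:chvar} via the change of variables $u=\sqrt{1+\xi_1^2-(\xi_2')^2}$. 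In the remaining regime $|\xi_2|\gg|\xi_2'|,|\xi_1|$, the decay of $\wh\rho$ provides an arbitrary number of factors of $|\xi_2'|/|\xi_2|$, and the argument closes verbatim as in the final case of Proposition~\ref{prop:SXsb}.

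The main obstacle is bookkeeping in the critical intermediate regime where $\xi_2'$ approaches $\pm\la\xi_1\ra$: there the factor $(1+\xi_1^2-(\xi_2')^2)^{-\frac14}$ in the integrand becomes singular and the Jacobian $du/d\xi_2'$ also degenerates. Here it is essential that the weight $(1+\xi_1^2-(\xi_2')^2)^{\frac14}$ hidden inside the definition of $K$ exactly cancels this singularity, and that the substitution $u=\sqrt{1+\xi_1^2-(\xi_2')^2}$ converts the remaining Jacobian into something compatible with Lemma~\ref{lem:chvar}. The restriction $b\leq\frac12$ enters precisely at this step, just as it does in the Schr\"odinger proof, because only then can the power $|\xi_2'|^{2b-\frac12}$ arising after integration in $\xi_2$ be absorbed without a divergence at large $|\xi_2'|$.
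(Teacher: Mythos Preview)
Your overall plan is essentially the paper's own, and for $W_1$ and the first and last cases of $W_2$ you are right that the Schr\"odinger argument transplants without change. The gap is in your middle regime ``$(\xi_2')^2+\xi_2^2>1$ with $|\xi_2|\les|\xi_1|$'', where you propose to absorb $\la\tau-\la\xi\ra\ra^b$ into $\la\tau-\sqrt{1+\xi_1^2-(\xi_2')^2}\ra^{100}$ and then integrate in $\xi_2$ exactly as for Schr\"odinger. The point you have missed is that the Klein--Gordon modulation shift is
\[
\la\xi\ra-\sqrt{1+\xi_1^2-(\xi_2')^2}\ \sim\ \frac{(\xi_2')^2+\xi_2^2}{\la\xi_1\ra},
\]
not $(\xi_2')^2+\xi_2^2$ as in the Schr\"odinger case. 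Hence you only have the bound
\[
\la\tau-\la\xi\ra\ra^b\ \les\ \la\tau-\sqrt{1+\xi_1^2-(\xi_2')^2}\ra^{b}\Big(1+\frac{((\xi_2')^2+\xi_2^2)^b}{\la\xi\ra^b}\Big),
\]
which is exactly the paper's \eqref{eq:taylorapprox}. The second summand carries an extra factor $\la\xi\ra^{-b}$ that, at $b=\tfrac12$, cancels the prefactor $\la\xi_1\ra^{1/2}$ coming from $\la\xi\ra^s/\la\xi_1,\xi_2'\ra^{s-1/2}$; after that cancellation your ``integrate in $\xi_2$, then Lemma~\ref{lem:chvar}'' indeed works. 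But the first summand (the ``$1$'') has no such gain: integrating in $\xi_2$ produces $\big\||\xi_2'|/((\xi_2')^2+\xi_2^2)\big\|_{L^2_{\xi_2}}\sim|\xi_2'|^{-1/2}$, and Lemma~\ref{lem:chvar} then leaves you with $\sup_{\xi_2'}\la\xi_1\ra^{1/2}/|\xi_2'|$, which blows up as $\xi_2'\to0$. The paper closes this sub-case by reverting to the substitution trick \eqref{magic} (take the $L^2_{\xi_1,\tau}$ norm inside) when $|\xi_2'|\ll\la\xi_1\ra$, and uses your $\xi_2$-integration plus Lemma~\ref{lem:chvar} only when $|\xi_2'|\gtrsim\la\xi_1\ra$.

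So the ``main obstacle'' is not the endpoint $\xi_2'\to\pm\la\xi_1\ra$ you flagged---that one is indeed resolved by the $(1+\xi_1^2-(\xi_2')^2)^{1/4}$ weight hidden in $K$---but rather the opposite endpoint $\xi_2'\to 0$ together with the weaker Klein--Gordon modulation shift, which forces an additional case-split absent from the Schr\"odinger proof.
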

\begin{proof} Note that it suffices to consider $b = \frac12$.

For $W_1$ the claim is immediate from the definitions of $W_1$ and the first part of the norm $\mathcal{H}_W^{s }$ in \eqref{normHW} and noting that the $X_W^{s  ,b}$ is defined as an infemum.

For $W_2$, we have 
\begin{multline*}
\F(\kappa W_2(\ell))(\xi, \tau) = \\ \int_{1+\xi_1^2  >\eta^2}   \widehat\kappa\big(\tau-   \sgn(\eta)\sqrt{1+ \xi_1^2-\eta^2} \big)  \widehat{\rho}(\xi_2/|\eta|)  \widehat {\chi_{t>0}\ell}\big(\xi_1,\sgn(\eta)\sqrt{ 1+\xi_1^2 -\eta^2}\big) \frac{  d\eta  }{\sqrt{1+\xi_1^2-\eta^2}}.
\end{multline*}
Letting 
$$
L(\xi_1,\eta):= \la \xi_1  \ra ^{s  -\frac12 } \big|  \widehat {\chi_{t>0}\ell}\big(\xi_1,\sgn(\eta) \sqrt{ 1+ \xi_1^2-\eta^2 }\big) \big|\, \frac {|\eta| }{(1+ \xi_1^2-\eta^2)^{\frac14}} \chi_{\eta^2<\xi_1^2+1 },
$$ 
note that $\|L\|_{L^2_{\xi_1,\eta}}\les \|\chi_{t>0}\ell\|_{\mathcal H^{s  }_W}$. We have  the following bound for $X^{s ,b}_{W,\pm}$ norm
$$
\Big\|  \int_{1+\xi_1^2 >\eta^2}   \frac{\la \xi  \ra^{s }}{ \la \xi_1  \ra ^{s  -\frac12 }}\left|\widehat\kappa(\tau-   \sgn(\eta)\sqrt{ 1+ \xi_1^2-\eta^2} )\right| \big\la \tau \pm  \la \xi \ra  \big\ra^{b}  \frac{ |\widehat{\rho}(\xi_2/|\eta|)| }{|\eta| (1+ \xi_1^2-\eta^2)^{\frac14}}   L(\xi_1,\eta) d\eta\Big\|_{L^2_{\xi ,\tau}}.
$$
Since the $X^{s , b}_{W}$ norm is defined as an infimum, and since $\kappa,\rho$ are Schwartz functions, it suffices to consider 
\be\label{wavexsbtemp}
\Big\|  \int_{1+\xi_1^2 >\eta^2} \frac{\la \xi  \ra^{s }}{ \la \xi_1  \ra ^{s  -\frac12 }} 
\frac{\la \tau -  \la \xi \ra   \ra^{b} }{\la\tau-  \sqrt{ 1+ \xi_1^2-\eta^2} \ra^{100} }\frac{|\eta|^{M-1} }{ (\eta^M+\xi_2^M) (1+ \xi_1^2-\eta^2)^{\frac14}}    L(\xi_1,\eta) d\eta\Big\|_{L^2_{\xi, \tau}}.
\ee
When $\eta^2+\xi_2^2\ll 1$, observe that $ \la \tau -  \la \xi \ra    \ra\sim \big\la \tau- \sqrt{ 1+ \xi_1^2-\eta^2}\big\ra$,  $\la \xi   \ra \sim \la \xi_1 \ra$, and $ (1+ \xi_1^2-\eta^2)^{\frac14}\sim \la \xi_1\ra^{\frac12}$. Taking $L^2_{\xi_1,\tau}$ norm inside and choosing $M=2$, we estimate the norm by 
$$
\Big\|
\int_{\eta^2+\xi_2^2\ll 1}   \frac{|\eta|}{ \eta^2+\xi_2^2  }   \|L(\xi_1,\eta)\|_{L^2_{\xi_1}} d\eta\Big\|_{L^2_{\xi_2  }},
$$
which can be bounded as in the proof of the previous theorem, see \eqref{magic}.

When $\eta^2+\xi_2^2\gtrsim 1$ and $|\xi_2| \gg |\xi_1| $, we bound \eqref{wavexsbtemp} by 
$$
\Big\|  \int_{\xi_1^2 +1>\eta^2} \frac{1}{ \la \xi_1  \ra ^{s -\frac12  }} 
\frac{1}{\la\tau-  \sqrt{1+ \xi_1^2-\eta^2} \ra^{100-b} }\frac{|\eta|^{M-1} }{ \la \xi_2 \ra^{M - s  - b}(1+ \xi_1^2-\eta^2)^{\frac14}}    L(\xi_1,\eta) d\eta\Big\|_{L^2_{\xi, \tau}}.
$$
Taking the $L^2_{\xi_2}$ norm yields
$$
\Big\|  \int_{\xi_1^2 +1>\eta^2} \frac{1}{ \la \xi_1   \ra ^{M - 1-b}} 
\frac{|\eta|^{M-1}}{\la\tau-  \sqrt{ 1+ \xi_1^2-\eta^2} \ra^{10 } (1+ \xi_1^2-\eta^2)^{\frac14}}  L(\xi_1,\eta) d\eta\Big\|_{L^2_{\xi_1,\tau}}.
$$
This is bounded by $\| L\|_{L^2_{\xi_1, \eta}}$ for $b\leq \frac12$ and $M\geq \frac32$ using Lemma~\ref{lem:chvar}.

When $\eta^2+ \xi_2^2 \gtrsim 1$ and $|\xi_2| \lesssim  |\xi_1| $,   the ratio of $s $ multipliers is $\sim 1$, for any $s $. 
We also have 
\begin{equation} \label{eq:taylorapprox} \frac{\big\la \tau -  \la\xi\ra   \big\ra^{b}}{\big\la \tau- \sqrt{ 1+ \xi_1^2-\eta^2}\big\ra^{100}} \les \frac{1+ \frac{(\eta^2+\xi_2^2)^{b}}{\la \xi\ra ^{b}}      }{\big\la \tau- \sqrt{  1+ \xi_1^2-\eta^2}\big\ra^{100-b}}. 
\end{equation}

We first consider the contribution of the second term  in the numerator of \eqref{eq:taylorapprox}. Taking $M=2$, it is bounded by  
$$
\Big\|  
\int_{ \eta^2< 1+\xi_1^2  } \frac1{\big\la \tau- \sqrt{ 1+ \xi_1^2-\eta^2}\big\ra^{10 }}  \frac{ |\eta|\la \xi_1\ra^{\frac12} }{(\eta^2+\xi_2^2 )^{1 - b} \la \xi\ra^b (1+ \xi_1^2-\eta^2)^{\frac14}  }  |L(\xi_1,\eta)| d\eta\Big\|_{L^2_{\xi, \tau}}.
$$
Taking the $L^2_{\xi_2}$ norm inside the integral and recalling that   $b = \frac12$, we have
$$
\Big\|\frac{ \eta }{(\eta^2+\xi_2^2)^{\frac12}   }\Big\|_{L^2_{\xi_2}}\les |\eta|^{ \frac12} ,
$$
we obtain the bound
$$
\Big\|
\int_{ \eta^2<1+ \xi_1^2  } \frac{|\eta|^{ \frac12}}{\big\la \tau- \sqrt{ 1+ \xi_1^2-\eta^2}\big\ra^{10 }(1+ \xi_1^2-\eta^2)^{\frac14}} |L(\xi_1,\eta)| d\eta\Big\|_{L^2_{\xi_1, \tau}}.
$$
This is bounded by $\| L\|_{L^2_{\xi_1, \eta}}$  using Lemma~\ref{lem:chvar}.

Finally, we consider the contribution of $1$ in the numerator of \eqref{eq:taylorapprox}:
$$
\Big\|  
\int_{ \eta^2< 1+\xi_1^2  } \frac1{\big\la \tau- \sqrt{ 1+ \xi_1^2-\eta^2}\big\ra^{10 }}  \frac{ |\eta|\la \xi_1\ra^{\frac12} }{(\eta^2+\xi_2^2 )   (1+ \xi_1^2-\eta^2)^{\frac14}  }  |L(\xi_1,\eta)| d\eta\Big\|_{L^2_{\xi, \tau}}.
$$
This can be bounded as in \eqref{magic} after taking the $L^2_{\xi_1,\tau}$ norm inside in the case $|\eta|\ll \la \xi_1\ra$. In the case $|\eta|\gtrsim \la \xi_1\ra$, we obtain the bound by  taking the $L^2_{\xi_2}$ norm inside and then applying Lemma~\ref{lem:chvar}.
\end{proof}

\begin{prop}\label{prop:KSS} (Kato smoothing for Schr\"odinger) For $s \geq 0$, and $f\in H^{s }(\R^2)$, we have $\kappa(t)  e^{it\Delta}f \in C^0_y  \H^{s }_S$, and 
$$
\big\|\kappa(t)  e^{it\Delta}f  \big\|_{L^\infty_y \H^{s }_S}\les \|f\|_{H^{s }(\R^2)}.
$$  
\end{prop}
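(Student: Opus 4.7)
The plan is to compute the $(x,t)$-Fourier transform of $\kappa(t)e^{it\Delta}f$ at fixed $y$, evaluate it along the paraboloidal curves $\beta=\pm\xi_2^2-\eta^2$ that define the two halves of the $\mathcal H^s_S$ norm (matching the $S_1$ and $S_2$ summands in the decomposition of $S_0^t$), and bound each contribution by $\|f\|_{H^s(\R^2)}$ uniformly in $y$. A Fubini calculation gives
\begin{equation*}
\mathcal F_{x,t}\bigl(\kappa(t)e^{it\Delta}f\bigr)(\eta,\beta) = \frac{1}{2\pi}\int_\R \widehat\kappa(\beta+\eta^2+\mu^2)\,e^{iy\mu}\,\widehat f(\eta,\mu)\,d\mu,
\end{equation*}
so the $S_2$ branch $\beta=\xi_2^2-\eta^2$ produces the off-resonant kernel $\widehat\kappa(\mu^2+\xi_2^2)$, Schwartz in both $\mu$ and $\xi_2$; its contribution is trivially controlled by Cauchy--Schwarz even after including the $\xi_2^2\la\xi\ra^{2s}$ weight, giving a bound by $\|f\|_{L^2}$.

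The work lies in the $S_1$ branch $\beta=-\eta^2-\xi_2^2$, whose kernel $\widehat\kappa(\mu^2-\xi_2^2)$ is concentrated on the one-dimensional resonant set $\mu=\pm\xi_2$. I would split $\la\xi\ra^{2s}\les\la\eta\ra^{2s}+\la\xi_2\ra^{2s}$. The $\la\eta\ra^{2s}$ piece reduces, via Cauchy--Schwarz in $\mu$, to the two resonance integrals
$$\int_\R \la\mu^2-\xi_2^2\ra^{-N}\,d\mu \les \la\xi_2\ra^{-1}, \qquad \int_\R \xi_2^2\la\xi_2\ra^{-1}\la\mu^2-\xi_2^2\ra^{-N}\,d\xi_2 \les 1,$$
both proved by changing variables around the two resonant branches $\mu=\pm\xi_2$ for $N$ large; this yields a bound by $\|f\|_{H^s(\R^2)}^2$. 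The $\la\xi_2\ra^{2s}$ piece is handled by the elementary split $\la\xi_2\ra^s\les\la\mu\ra^s+\la\mu^2-\xi_2^2\ra^{s/2}$ (valid for $s\ge 0$): the first term is absorbed into $\widehat f$ to give $\|f\|_{L^2_xH^s_y}\leq\|f\|_{H^s}$, while the second is absorbed into the Schwartz tail of $\widehat\kappa$ (replacing $N$ by $N-s/2$), and the same resonance argument closes.

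Uniformity in $y$ is automatic since $|e^{iy\mu}|=1$, and the continuity $\kappa(t)e^{it\Delta}f\in C^0_y\mathcal H^s_S$ follows from dominated convergence applied to the integral above using the $y$-independent majorants just constructed. The main obstacle is the $S_1$ branch: the resonant set has codimension one, so Schwartz decay alone cannot suppress it, and the proof hinges on extracting the $\la\xi_2\ra^{-1}$ gain from the resonance integral --- precisely the $\tfrac12$-derivative Kato smoothing gain --- to offset the $\xi_2^2$ weight in $\|\cdot\|_{\mathcal H^s_S}$.
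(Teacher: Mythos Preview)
Your proposal is correct and follows essentially the same route as the paper. Both compute the $(x,t)$-Fourier transform of $\kappa(t)e^{it\Delta}f$ as an integral against $\widehat\kappa(\beta+\eta^2+\mu^2)$, evaluate on the curves $\beta=\pm\xi_2^2-\eta^2$, and reduce the resonant branch via Cauchy--Schwarz to the same pair of one-dimensional integrals $\int\la\mu^2-\xi_2^2\ra^{-N}d\mu\les\la\xi_2\ra^{-1}$ and its dual. The only cosmetic difference is the treatment of the $s$-weight: the paper keeps $\la\xi_1,\eta\ra^{2s}$ intact and proves the Schur bound $\int\la\xi_1,\eta\ra^{2s}|\eta|\la\xi_2^2-\eta^2\ra^{-M}d\eta\les\la\xi\ra^{2s}$ by the case split $|\eta|\les|\xi_2|$ versus $|\eta|\gg|\xi_2|$, whereas you split $\la\eta,\xi_2\ra^{2s}\les\la\eta\ra^{2s}+\la\xi_2\ra^{2s}$ and then use $\la\xi_2\ra^s\les\la\mu\ra^s+\la\mu^2-\xi_2^2\ra^{s/2}$ to transfer the second piece onto $\widehat f$ or into the Schwartz tail; both arguments are equivalent and the underlying estimates are identical.
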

\begin{proof} 
We set $U(x,y,t) =\kappa(t)  e^{it\Delta}f (x,y,t)$. Taking the Fourier transform in $x$ we get
$$
 U(\widehat{\xi_1},y,t)=\int_{\R} \kappa(t) \ e^{-i |\xi|^2  t} e^{i\xi_2 y}\widehat{f}(\xi ) \ d\xi_2.
$$
Now the Fourier transform in $t$ gives
$$
U(\widehat{\xi_1},y,\widehat{\eta})  =\int_{\R} \widehat{\kappa}(\eta+|\xi|^2  )\ e^{i\xi_2  y} \widehat{f}(\xi ) \ d\xi_2 .
$$ 
By dominated convergence theorem, the statement follows from the claim:
$$
I:=\int_{\R^2} \la \xi_1,\eta\ra^{2s}  \eta^2  \Big(\int_{\R} |\widehat{\kappa}(\xi_2^2\pm\eta^2)| | \widehat{f}(\xi )|d\xi_2\Big)^2 d\eta d\xi_1 \les \|f\|_{H^s}^2.
$$
Applying the Cauchy-Schwarz inequality to the $\xi_2$-integral we get
$$
\Big(\int_{\R} |\widehat{\kappa}(\xi_2^2\pm\eta^2)| |\widehat{f}(\xi )|d\xi_2\Big)^2 \leq \left\|\widehat{\kappa}(\xi_2^2\pm\eta^2)\right\|_{L^1_{\xi_2}} \int_{\R}|\widehat{\kappa}(\xi_2^2\pm\eta^2)| |\widehat{f}(\xi )|^2d\xi_2,
$$
and using the fact that for any $M>0$
$$
|\widehat{\kappa}( \xi_2^2\pm\eta^2 )|\lesssim \frac{1}{\big\la  \xi_2^2\pm\eta^2 \big\ra^{M}},
$$
we have the estimate
$$
\left\|\widehat{\kappa}(\xi_2^2\pm\eta^2)\right\|_{L^1_{\xi_2}} \les \frac1{\la \eta\ra }.
 $$ We now combine these estimates to bound the integral $I$ as follows
$$
I\lesssim \int_{\R^3} \la \xi_1,\eta\ra^{2s} \frac{|\eta|  }{\big\la  \xi_2^2\pm\eta^2  \big\ra^{M}} |\widehat{f }(\xi )|^2 d\eta  d\xi.
$$
It suffices to consider only `-'   in the denominator and show that 
$$
J:=\int_{\R} \la \xi_1,\eta\ra^{2s} \frac{|\eta|  }{\big\la  \xi_2^2-\eta^2  \big\ra^{M}} d\eta \lesssim \la \xi  \ra^{2s}.
$$
This is trivial when   $|\eta|\les |\xi_2|$. 

When  $|\eta|>2  |\xi_2|$, using $|\xi_2^2-\eta^2|\gtrsim \eta^2$ we have
$$
J\lesssim \int  \la \xi_1,\eta\ra^{2s} \frac{|\eta| d\eta}{(1+\eta^2)^M}\lesssim \Big(\sup_{|\eta|>2  |\xi_2|} \frac{ \la \xi_1,\eta\ra^{2s}}{(1+\eta^2)^{M/2}}\Big)\,\int_{\R} \frac{|\eta|d\eta}{(1+\eta^2)^{M/2}}   \lesssim \la \xi  \ra^{2s},
$$ 
for $M>\max(2,2s)$.  
\end{proof}
\begin{prop}\label{prop:KSKG} (Kato smoothing for Klein-Gordon) For $s \in\R$, and $f \in H^{s }(\R^2)$, we have $\kappa(t)  e^{\pm it\la \nabla\ra}f \in C^0_y  \H^{s }_W$, and 
$$
\big\|\kappa(t)  e^{\pm it\la \nabla\ra  }f  \big\|_{L^\infty_y\H^{s }_W}\les \|f\|_{H^{s }(\R^2)}.
$$  
\end{prop}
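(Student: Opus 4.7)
My plan is to mirror the proof of Proposition~\ref{prop:KSS} with the Klein-Gordon phase $\pm\la\xi\ra$ replacing the Schr\"odinger phase $-|\xi|^2$. Setting $U=\kappa(t)e^{\pm it\la\nabla\ra}f$ and successively taking Fourier transforms in $x$ and then in $t$ yields
\[
U(\widehat{\xi_1},y,\widehat{\eta})=\int_\R \widehat\kappa\bigl(\eta\mp\la\xi\ra\bigr)\,e^{i\xi_2 y}\,\widehat f(\xi_1,\xi_2)\,d\xi_2.
\]
Because $|e^{i\xi_2 y}|=1$, every pointwise bound on $|U|$ will be uniform in $y$, yielding the $L^\infty_y$ statement at once; continuity in $y$ then follows by dominated convergence once the uniform bound is in place. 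Using the definition \eqref{normHW}, the theorem reduces to
\[
I:=\int_{\R^2}|\eta^2-\xi_1^2-1|^{\frac12}\la\xi_1,\eta\ra^{2s-1}|U(\widehat{\xi_1},y,\widehat\eta)|^2\,d\xi_1\,d\eta\les\|f\|_{H^s}^2.
\]

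I plan to apply Cauchy-Schwarz in $\xi_2$, exactly as in the Schr\"odinger case, to obtain
\[
|U|^2\leq \|\widehat\kappa(\eta\mp\la\xi\ra)\|_{L^1_{\xi_2}}\int_\R|\widehat\kappa(\eta\mp\la\xi\ra)|\,|\widehat f(\xi_1,\xi_2)|^2\,d\xi_2,
\]
and then swap the order of integration so $|\widehat f|^2$ plays the role of the outer measure. The linchpin of the argument is the pointwise bound
\[
\|\widehat\kappa(\eta\mp\la\xi\ra)\|_{L^1_{\xi_2}}\les \frac{\la\eta\ra}{|\eta^2-\xi_1^2-1|^{\frac12}}\chi_{\eta^2>1+\xi_1^2}+\la\eta\ra^{-N}\chi_{\eta^2\leq 1+\xi_1^2},
\]
which I will establish by the substitution $u=\sqrt{1+\xi_1^2+\xi_2^2}$, converting the $L^1_{\xi_2}$ integral into $2\int_{\sqrt{1+\xi_1^2}}^\infty|\widehat\kappa(\eta\mp u)|\,\frac{u}{\sqrt{u^2-1-\xi_1^2}}\,du$. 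Schwartz decay of $\widehat\kappa$ confines $u$ to an $O(1)$-neighborhood of $|\eta|$; in the bulk regime $u^2-1-\xi_1^2\gtrsim 1$ the weight is directly comparable to $\la\eta\ra/|\eta^2-1-\xi_1^2|^{1/2}$, while near the edge $u\approx\sqrt{1+\xi_1^2}$ the further substitution $v=u-\sqrt{1+\xi_1^2}$ exposes an integrable $v^{-1/2}$ singularity that still respects the claimed bound, and for $\eta^2<1+\xi_1^2$ the whole integral enjoys rapid decay in $\la\eta\ra$ because $|\eta\mp u|\geq\sqrt{1+\xi_1^2}-|\eta|$ throughout the range.

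With this estimate in hand the weight $|\eta^2-\xi_1^2-1|^{1/2}$ in $I$ exactly cancels the denominator, and after swapping orders the inner $\eta$-integral reduces to
\[
\int_\R|\eta|\,\la\xi_1,\eta\ra^{2s-1}|\widehat\kappa(\eta\mp\la\xi\ra)|\,d\eta\les\la\xi\ra^{2s},
\]
since $\widehat\kappa$ concentrates $\eta$ around $\pm\la\xi\ra$, whence $|\eta|\sim\la\xi_1,\eta\ra\sim\la\xi\ra$ on the effective support; the contribution from $\eta^2\leq 1+\xi_1^2$ is controlled by the $\la\eta\ra^{-N}$ factor, as in Proposition~\ref{prop:KSS}. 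Assembling the pieces gives $I\les\int|\widehat f(\xi)|^2\la\xi\ra^{2s}\,d\xi=\|f\|_{H^s}^2$. The main obstacle I anticipate is verifying the $L^1_{\xi_2}$ bound uniformly across the three regimes $\eta^2\gg 1+\xi_1^2$, $\eta^2\sim 1+\xi_1^2$, and $\eta^2\ll 1+\xi_1^2$: this is precisely where the Klein-Gordon analysis differs from the Schr\"odinger one, and the exponent $\frac12$ on $|\eta^2-\xi_1^2-1|$ in the definition of $\cH^s_W$ is calibrated exactly to absorb the edge singularity at $\xi_2=0$ arising from the square-root Jacobian.
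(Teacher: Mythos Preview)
Your overall strategy --- Cauchy--Schwarz in $\xi_2$, then an $L^1_{\xi_2}$ bound on $\widehat\kappa(\eta\mp\la\xi\ra)$, then swap orders --- is a legitimate alternative to the paper's approach, which instead rewrites the main term as an integral operator with kernel $K_{\xi_1}(\xi_2,\eta)=\chi_{|\eta|\sim\la\xi\ra}\,||\eta|-\la\xi_1\ra|^{1/4}\la\xi\ra^{-1/4}\la|\eta|-\la\xi\ra\ra^{-M}$ and runs a weighted Schur test across three subcases depending on the size of $|\xi_2|$ relative to $\la\xi_1\ra$. Your route is closer in spirit to the Schr\"odinger proof and is more direct; the paper's Schur test avoids computing the $L^1$ norm explicitly but has to track weights more carefully near the edge.

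However, there is a concrete error in your $L^1$ bound on the region $\eta^2\le 1+\xi_1^2$. You claim $\|\widehat\kappa(\eta\mp\la\xi\ra)\|_{L^1_{\xi_2}}\les\la\eta\ra^{-N}$ there, justified by $|\eta\mp u|\ge\sqrt{1+\xi_1^2}-|\eta|$. But that inequality only yields decay in $\la\xi_1\ra-|\eta|$, not in $\la\eta\ra$: if $\xi_1$ is large and $|\eta|=\la\xi_1\ra-\tfrac12$, the integrand is $O(1)$ near $u=\la\xi_1\ra$ and the edge $v^{-1/2}$ singularity (which you correctly identified) still produces $J\sim\la\xi_1\ra^{1/2}$, not $\la\eta\ra^{-N}$. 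The correct bound in this region is $J\les\la\xi_1\ra^{1/2}\la\,\la\xi_1\ra-|\eta|\,\ra^{-M}$. Fortunately this suffices: combining it with $|\eta^2-\xi_1^2-1|^{1/2}\les\la\xi_1\ra^{1/2}\la\,\la\xi_1\ra-|\eta|\,\ra^{1/2}$ and $\la\xi_1,\eta\ra\sim\la\xi_1\ra$ gives, after swapping, an inner $\eta$-integral $\les\la\xi_1\ra^{2s}\int\la\,\la\xi_1\ra-|\eta|\,\ra^{\frac12-M}|\widehat\kappa(|\eta|-\la\xi\ra)|\,d\eta$, and the residual $\widehat\kappa$ factor upgrades $\la\xi_1\ra^{2s}$ to $\la\xi\ra^{2s}$ (it forces $\la\xi\ra\sim\la\xi_1\ra$ up to rapid decay). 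So your approach can be repaired, but the stated bound and its justification are wrong as written.
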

\begin{proof}
Setting $W(x,y,t) =\kappa(t)  [e^{\pm it\la \nabla\ra } f] (x,y)$, and taking the Fourier transform in $x$ and $t$ we get
$$
W(\widehat{\xi_1},y,\widehat{\eta})  =\int_{\R} \widehat{\kappa}\big(\eta\mp \la \xi\ra \big) e^{i\xi_2 y} \widehat{f}(\xi )d\xi_2.
$$ 
By dominated convergence theorem, the statement follows from the claim:
\be\label{eq:temp1}
 \int_{\R^2}    \la\xi_1,\eta\ra^{2s-1} |\eta^2-\xi_1^2-1|^{\frac12}  \Big(\int_{\R} \big|\widehat{\kappa}\big( \eta  \pm\la  \xi\ra \big)\big| \frac{|\widehat{f}(\xi )|}{\la \xi\ra^s}d\xi_2\Big)^2 d\eta d\xi_1\les \|f\|_{L^2}^2.
\ee
In the cases $|\eta|\ll\la \xi\ra$ and $|\eta|\gg\la \xi\ra$, the bound is easy due to the Schwartz decay of $\widehat{\kappa}$. When $|\eta|\sim \la \xi\ra$, we bound the kernel by 
$$
K_{\xi_1}(\xi_2,\eta):=\chi_{|\eta|\sim\la \xi\ra} \frac{||\eta|-\la\xi_1\ra|^{\frac14}}{\la \xi\ra^{\frac14} \la |\eta|-\la \xi\ra\ra^M},
$$
and consider the subcases: \\
i) $|\xi_2|\gtrsim \la \xi_1\ra$, \\
ii) $|\xi_2|\ll \la \xi_1\ra$ and $||\eta|-\la \xi_1\ra|\les \frac{|\xi_2|^2}{\la \xi_1\ra}$, \\
iii) $|\xi_2|\ll \la \xi_1\ra$ and $||\eta|-\la \xi_1\ra|\gg \frac{|\xi_2|^2}{\la \xi_1\ra}$.

In case i), we have 
$$\sup_{\xi_1,\xi_2} \int K_{\xi_1}(\xi_2,\eta)d\eta \les 1, \text{ and } \sup_{\xi_1,\eta} \int K_{\xi_1}(\xi_2,\eta)d\xi_2 \les 1,$$
which suffices. The second inequality follows from the change of variable $\rho=\la \xi\ra$, noting that $d\rho=\frac{|\xi_2|}{\la\xi\ra}d\xi_2\sim d\xi_2.$

In case ii), the kernel is bounded by $|\xi_2|^{\frac12}\la \xi_1\ra^{-\frac12}\la |\eta|-\la\xi\ra\ra^{-M}$. We have 
$$
\int K_{\xi_1}(\xi_2,\eta)d\eta \les |\xi_2|^{\frac12}\la \xi_1\ra^{-\frac12}, \text{ and } 
\int K_{\xi_1}(\xi_2,\eta)|\xi_2|^{\frac12}\la \xi_1\ra^{-\frac12} d\xi_2 \les 1,$$
by the same change of variable in the second integral.

In case iii), the kernel is bounded by 
$ \chi_{|\xi_2|\ll (|\eta|-\la \xi_1\ra|)^{1/2} \la\xi_1\ra^{1/2}} \la \xi_1\ra^{-\frac14}\la |\eta|-\la\xi_1\ra\ra^{-M+\frac14}$. Therefore, 
$$
\int K_{\xi_1}(\xi_2,\eta)d\eta \les \la \xi_1\ra^{-\frac14}, \text{ and } 
$$
$$\int K_{\xi_1}(\xi_2,\eta) d\xi_2 \les \la \xi_1\ra^{-\frac14}\la |\eta|-\la\xi_1\ra\ra^{-M+\frac14}(|\eta|-\la \xi_1\ra|)^{1/2} \la\xi_1\ra^{1/2}\les \la \xi_1\ra^{ \frac14},$$
which suffices. 
 \end{proof}

\begin{prop} \label{prop:tracelem}(Trace lemmas for the boundary operators) 
Fix $s_1, s_2\geq -\frac12$ and assume that $\rho$ is mean zero and it's first moment is zero.  For $\chi_{t>0}k\in \mathcal{H}^{s_1}_S$, we have   $S_0^t(0,k)\in C^0_{t}H^{s_1}_{x,y} \cap C^0_y \mathcal{H}^{s_1}_S$. Similarly,  for  $\chi_{t>0}\ell \in  \mathcal{H}^{s_2}_W$, we have  $W_0^t(0,\ell)\in C^0_{t} H^{s_2}_{x,y} \cap C^0_y \mathcal{H}^{s_2}_W$.  
\end{prop}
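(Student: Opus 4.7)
The plan is to decompose $S_0^t(0,k) = S_1(k)+S_2(k)$ and $W_0^t(0,\ell) = W_1(\ell)+W_2(\ell)$ using the explicit representations from Section~2, and establish the claimed $C^0_t H^s_{x,y}$ and $C^0_y \mathcal{H}^s$ bounds on each of the four pieces separately. For the first claim I would compute the spatial Fourier transform in $(x,y)$ at fixed time $t$; for the second, the Fourier transform in $(x,t)$ at a fixed slice $y$. In both cases the resulting expressions can be compared directly to the boundary-data norms \eqref{normHS} and \eqref{normHW}, and continuity in the relevant parameter will then follow from Lebesgue's dominated convergence once the uniform bound is in hand.

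For $S_1$ the spatial Fourier transform is $c\,\xi_2\,\chi_{\xi_2>0}\,e^{-it|\xi|^2}\,\widehat{\chi_{t>0}k}(\xi_1,-|\xi|^2)$, of modulus independent of $t$; integrating $|\cdot|^2\,\la\xi\ra^{2s_1}$ and making the change of variables $\tau=-|\xi|^2$ reproduces, up to a constant, the ``$-$'' branch of $\|\chi_{t>0}k\|_{\mathcal{H}^{s_1}_S}^2$. Likewise, the $(x,t)$ Fourier transform of the $y$-slice $S_1(k)(\cdot,y,\cdot)$ is the same quantity multiplied by $e^{iy\xi_2}$, again of modulus independent of $y$. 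The linear Klein--Gordon piece $W_1$ is handled identically, working with the first summand in the equivalent norm \eqref{normHW1}.

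The substantive work is in $S_2$ and $W_2$, whose $y$-dependence is through $\rho(y\eta)$ (resp.\ $\rho(y|\eta|)$), where $\eta$ denotes the radial integration variable. Taking the Fourier transform in $y$ introduces the factor $\widehat\rho(\xi_2/\eta)/\eta$, producing kernel integrals of exactly the same flavour as those appearing in the proof of Proposition~\ref{prop:SXsb}, minus the modulation weight $\la\tau+|\xi|^2\ra^b$, which only simplifies matters. I would follow the same case-split scheme --- $\eta^2+\xi_2^2\ll 1$; $\eta^2+\xi_2^2\gtrsim 1$ with $|\xi_2|\les \max(|\xi_1|,|\eta|)$; and $|\xi_2|\gg\max(|\xi_1|,|\eta|)$ --- with the calculation in \eqref{magic} and the estimates around \eqref{Keta2b} serving as direct templates. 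The $C^0_y\mathcal{H}^{s_1}_S$ bound is in fact cleaner: the $(x,t)$ Fourier transform of the $y$-slice is supported on the ``$+$'' dispersion surface and multiplied by the bounded factor $\rho(y\eta)$, so uniformity in $y$ is immediate, and pointwise continuity of $y\mapsto\rho(y\eta)$ together with dominated convergence yields the continuity in $y$.

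The main obstacle is the low-frequency region $\eta^2+\xi_2^2\ll 1$ in the $H^s$ bound for $S_2$ and its analogue for $W_2$, where bounding $|\widehat\rho|$ merely by a constant produces a logarithmically divergent integral in the dilation variable $r=\xi_2/\eta$. It is precisely here that the assumption that $\rho$ is mean zero with vanishing first moment enters: Taylor expansion at the origin upgrades the bound to $|\widehat\rho(\xi_2/\eta)|\les \min((\xi_2/\eta)^2,1)$, and the relevant integral then closes exactly as in \eqref{magic}. The analysis for $W_2$ is parallel, invoking the second summand in \eqref{normHW1} and the extra case splits $|\eta|\ll\la\xi\ra$, $|\eta|\sim\la\xi\ra$, $|\eta|\gg\la\xi\ra$ familiar from the proof of Proposition~\ref{prop:KSKG}, which carry over without essential change.
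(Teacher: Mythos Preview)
Your plan is correct in outline and would succeed, but the paper handles the $C^0_t H^s$ bounds on $S_2$ and $W_2$ by a different and shorter route. Rather than revisiting the case-split machinery of Proposition~\ref{prop:SXsb}, the paper observes that at fixed $t$ one may write $S_2(k)(\cdot,\cdot,t)=T(g)$ for a suitable $g$ with $\|g\|_{H^{s_1}}\les\|\chi_{t>0}k\|_{\mathcal H^{s_1}_S}$, where $T(g)(x,y)=\int\rho(\eta y)\,g(x,\widehat\eta)\,d\eta$. Boundedness of $T$ on $H^{s_1}$ is then a one-line dilation argument: after computing $\widehat{T(g)}(\theta)=\int\widehat\rho(r)\,\widehat g(\theta_1,\theta_2/r)\,dr/r$, Minkowski and scaling give $\|T(g)\|_{H^{s_1}}\les\|g\|_{H^{s_1}}\int|\widehat\rho(r)|\,h_{s_1}(r)\,|r|^{-1}\,dr$ for an explicit $h_{s_1}$, and the moment hypotheses on $\rho$ secure convergence at $r=0$. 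This replaces all three of your subcases at once. For $W_2$ the paper does not parallel Proposition~\ref{prop:KSKG}; instead it reduces to $-\tfrac12\le s_2\le 0$ (each derivative costs $\la\eta\ra$), splits $W_2$ according to $|\beta|<\la\eta\ra/2$ versus $|\beta|\in[\la\eta\ra/2,\la\eta\ra]$, handles the first piece by Cauchy--Schwarz in $\beta$, and recognises the second as another instance of $T(g)$.

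Your approach keeps closer to the already-proved $X^{s,b}$ estimates and is perfectly viable; its cost is that the region $|\xi_2|\gg|\eta|,|\xi_1|$---which in Proposition~\ref{prop:SXsb} leans on the $\widehat\kappa$-localisation behind \eqref{Keta2b}---needs a bit more care at fixed $t$ than you suggest, though the Schwartz decay of $\widehat\rho$ still closes it. For $S_1$, $W_1$, and the $C^0_y\mathcal H^s$ claims your treatment coincides with the paper's.
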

\begin{proof} We first consider the Schr\"odinger part.  
We start with $C^0_{t}H^{s_1}_{x,y}$ assuming that the boundary data is in $\mathcal{H}^{s_1}_S$. The claim is immediate for $S_1$. 
For $S_2$, letting 
$$\widehat g(\xi):=e^{it(\xi_2^2-\xi_1^2)} \xi_2\ \widehat{\chi_{t>0}k}(\xi_1,\xi_2^2-\xi^2_1)$$
and by using the time continuity of Schr\"odinger evolution in $H^s$, it suffices to prove that 
$$
T(g)(x,y)=\int \rho( \xi_2 y) g(x,\widehat{\xi_2}) d\xi_2
$$
is bounded in $H^{s_1}$. We have
$$
\widehat{T(g)}(\theta_1,\theta_2)=\int \widehat\rho( \theta_2/\xi_2 )  \widehat{g}(\theta_1, \xi_2 ) \frac{d\xi_2}{\xi_2}= \int \widehat\rho( \eta )   \widehat{g}(\theta_1, \theta_2/\eta) \frac{ d\eta}{\eta}.
$$
Therefore,  
$$
\|T(g)\|_{H^{s_1}}\leq \int |\widehat\rho( \eta )|  \big\| \la \theta_1,\theta_2\ra^{s_1} \widehat{g}(\theta_1, \theta_2/\eta) \big\|_{L^2_{\theta_1,\theta_2}}\frac{ d\eta}{|\eta|}
$$
Note that 
$$
 \big\| \la \theta_1,\theta_2\ra^{s_1} \widehat{g}(\theta_1, \theta_2/\eta) \big\|_{L^2_{\theta_1,\theta_2}} \les \|g\|_{H^{s_1}}\left\{\begin{array}{ll}\la \eta\ra^{s_1}\eta^{\frac12} ,& s_1\geq 0\\
 |\eta|^{s_1+\frac12}+|\eta|^{\frac12}, & s_1<0.
\end{array}\right.
 $$
 This finishes the proof by the assumptions on $\rho$.  
 
We now prove that $S_0^t(0,k)\in   C^0_y \mathcal{H}^{s_1}_S$. Note  that by \eqref{def:S0t},  we have (uniformly in $y$)
$$
|[S_0^t(0,k)](\widehat{\xi_1},y,\widehat \beta)|\les  |\widehat {\chi_{t>0}k}(\xi_1,\beta)|.
$$
Therefore, by dominated convergence theorem, 
$$\|S_0^t(0,k)\|_{C^0_y \mathcal{H}^{s_1}_S}\les \|\chi_{t>0} k\|_{ \mathcal{H}^{s_1}_S}.
$$ 
The proof of $W_0^t(0,\ell)\in C^0_y \mathcal{H}^{s_2}_W$ is analogous.

It remains to see that  $W_0^t(0,\ell)\in C^0_{t} H^{s_2}_{x,y} $. Once again, for $W_1$, the proof is easy.   For $W_2$,
$$
W_2(\ell)=\int_\R \int_{ \eta^2+1>\beta^2 }  e^{ix\eta+i\beta t} \rho(y\sqrt{ \eta^2+1-\beta^2})\widehat {\chi_{t>0}\ell}(\eta,\beta) d\beta d\eta
$$
Noting that the effect of each $x$ or $y$ derivative is bounded by $\la \eta\ra$ on the Fourier side, it suffices to consider the case $-\frac12\leq s_2\leq 0$.  We write $W_2(\ell)$ as 
$$
\int_\R \int_{|\beta|<\la\eta\ra/2}   e^{ix\eta+i\beta t} \rho(y\sqrt{ \eta^2+1-\beta^2})\widehat {\chi_{t>0}\ell}(\eta,\beta) d\beta d\eta$$
$$
+\int_\R \int_{\la\eta\ra/2<|\beta|<\la\eta\ra }   e^{ix\eta+i\beta t} \rho(y\sqrt{ \eta^2+1-\beta^2})\widehat {\chi_{t>0}\ell}(\eta,\beta) d\beta d\eta.
$$
For the first summand, taking Fourier transform, it suffices to prove that
$$
\Big\| \la \eta,\theta\ra^{s_2}\int_{|\beta|<\la\eta\ra/2}   \frac{| \widehat\rho(\theta/\sqrt{ \eta^2+1-\beta^2})|}{\sqrt{ \eta^2+1-\beta^2}} |\widehat {\chi_{t>0}\ell}(\eta,\beta)| d\beta \Big\|_{L^2_{\eta,\theta}}\les \|\widehat {\chi_{t>0}\ell}(\eta,\beta) \la\eta\ra^{s_2} \|_{L^2_{\eta,\beta}}
$$
Since $s_2\leq 0$, we have $\la \eta,\theta\ra^{s_2}\leq \la \eta\ra^{s_2}$. Taking $L^2_\theta$ norm inside, we bound the left hand side by
$$
\Big\| \int_{|\beta|<\la\eta\ra/2}   \frac{1}{\sqrt{ \la \eta \ra}} |\widehat {\chi_{t>0}\ell}(\eta,\beta)| \la \eta \ra^{s_2} d\beta \Big\|_{L^2_{\eta }}.
$$
The claim follows by Cauchy-Schwarz inequality in $\beta$ integral. 

We rewrite the second summand as
$$ \int_\R \int_{|\xi_2|\leq \frac{\sqrt3}{2} \la \xi_1\ra}  e^{ix\xi_1}e^{i t \sgn(\xi_2)\sqrt{ 1+\xi_1^2-\xi_2^2} } \rho(y|\xi_2|)  \widehat {\chi_{t>0}\ell}\big(\xi_1,\sgn(\xi_2)\sqrt{ 1+\xi_1^2 -\xi_2^2}\big) \frac{|\xi_2|   d\xi }{\sqrt{1+ \xi_1^2 -\xi_2^2}}.
$$ 
Note that this is $T(g)(x,y)$ with  
$$\widehat g(\xi):=e^{i t \sgn(\xi_2)\sqrt{ 1+\xi_1^2-\xi_2^2} }\frac{|\xi_2|   }{\sqrt{1+ \xi_1^2 -\xi_2^2}} \widehat {\chi_{t>0}\ell}\big(\xi_1,\sgn(\xi_2)\sqrt{ 1+\xi_1^2 -\xi_2^2}\big)\chi_{|\xi_2|\leq \frac{\sqrt3}{2} \la \xi_1\ra}.$$
The claim follows from the  $H^{s_2}$ boundedness of $T$ that we proved above by dominated convergence theorem after noting that $H^{s_2}$ norm of $g$ is bounded by $ \mathcal{H}^{s_2}_W$ norm of $\chi_{t>0}\ell $.
\end{proof}

We now establish Kato smoothing estimates for the Duhamel integrals. 
 \begin{prop}\label{prop:DKS}(Kato smoothing for Schr\"odinger Duhamel)  For any $ b<\frac12$, we have
\be
\Big\|\kappa  \int_0^te^{ i(t- t^\prime)\Delta} G  dt^\prime  \Big\|_{C^0_y\H^{s_1}_S}\les   \left\{ \begin{array}{ll} \|G\|_{X_S^{s_1,-b}}& \text{ for } 0\leq s_1 \leq \frac12,  \\
\|G\|_{X_S^{s_1,-b}}+  \|G\|_{X_S^{\frac12+,\frac{s_1}2-\frac34}}   & \text{ for }  \frac12 < s_1. \end{array}
\right.
\ee
\end{prop}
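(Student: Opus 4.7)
The plan is to represent the Duhamel term via its spacetime Fourier transform. Writing $\widehat G(\xi,\tau)$ and setting $\lambda=\tau+|\xi|^2$ (the modulation), a direct computation gives
\[
\kappa(t)\int_0^t e^{i(t-t')\Delta}G\,dt' = \kappa(t)\int e^{i\xi\cdot x-i|\xi|^2 t}\,\frac{e^{i\lambda t}-1}{i\lambda}\,\widehat G(\xi,\lambda-|\xi|^2)\,d\xi\,d\lambda.
\]
I decompose the $\lambda$-integral into three pieces: (i) $|\lambda|\leq 1$; (ii) $|\lambda|>1$ together with the $-1/(i\lambda)$ part of $(e^{i\lambda t}-1)/(i\lambda)$; (iii) $|\lambda|>1$ together with the $e^{i\lambda t}/(i\lambda)$ part. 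Pieces (i) and (ii) both reduce to free-group Kato smoothing (Proposition~\ref{prop:KSS}) applied to suitable initial data; piece (iii) is the main obstacle.

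For (i), Taylor-expand $(e^{i\lambda t}-1)/(i\lambda)=\sum_{k\geq 1}(i\lambda)^{k-1}t^k/k!$; each term becomes $\tfrac{t^k}{k!}\kappa(t)e^{it\Delta}f_k$ with $\widehat{f_k}(\xi)=\int_{|\lambda|\leq 1}(i\lambda)^{k-1}\widehat G(\xi,\lambda-|\xi|^2)\,d\lambda$. Absorbing $t^k\kappa(t)$ into a Schwartz cut-off, applying Proposition~\ref{prop:KSS}, and using Cauchy--Schwarz in $\lambda$ bounds $\|f_k\|_{H^{s_1}}\les C_k\|G\|_{X_S^{s_1,-b}}$ with $\sum C_k/k!<\infty$. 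For (ii), the piece equals $\kappa(t)e^{it\Delta}f$ with $\widehat f(\xi)=-\int_{|\lambda|>1}\widehat G(\xi,\lambda-|\xi|^2)/(i\lambda)\,d\lambda$; the hypothesis $b<\tfrac12$ makes $\int_{|\lambda|>1}\langle\lambda\rangle^{2b}/\lambda^2\,d\lambda$ finite, so Cauchy--Schwarz and Proposition~\ref{prop:KSS} conclude.

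The obstacle is piece (iii). Reverting to $\tau$, we must control the $C^0_y\mathcal H^{s_1}_S$-norm of $\kappa(t)\mathcal F^{-1}\big[\widehat G(\xi,\tau)\chi_{|\tau+|\xi|^2|>1}/(i(\tau+|\xi|^2))\big]$. Taking the Fourier transform in $(x,t)$ at fixed $y$, evaluating at $\beta=\pm\xi_2'^2-\xi_1^2$, and majorizing $|e^{i\xi_2 y}|\leq 1$ inside the $\xi_2$-integral (this uniformizes the bound in $y$ and yields $C^0_y$-regularity via dominated convergence), the squared $\mathcal H^{s_1}_S$-norm is controlled, after the substitution $\sigma=\tau+|\xi|^2$, by
\[
\int \xi_2'^2\langle\xi_1,\xi_2'\rangle^{2s_1}\Big(\iint\frac{|\widehat\kappa(\pm\xi_2'^2+\xi_2^2-\sigma)|\,\langle\sigma\rangle^b\chi_{|\sigma|>1}}{|\sigma|\,\langle\xi_1,\xi_2\rangle^{s_1}}\,L(\xi_1,\xi_2,\sigma)\,d\sigma\,d\xi_2\Big)^2 d\xi_1\,d\xi_2',
\]
where $L(\xi_1,\xi_2,\sigma):=\langle\xi_1,\xi_2\rangle^{s_1}\langle\sigma\rangle^{-b}|\widehat G(\xi_1,\xi_2,\sigma-\xi_1^2-\xi_2^2)|$ has $L^2$-norm equal to $\|G\|_{X_S^{s_1,-b}}$.

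From here the analysis mirrors the proofs of Propositions~\ref{prop:SXsb} and~\ref{prop:KSS}: Schwartz decay of $\widehat\kappa$ localizes $\sigma$ near $\pm\xi_2'^2+\xi_2^2$; we decompose $(\xi_1,\xi_2,\xi_2')$-space by which component dominates; apply Minkowski to move either $L^2_{\xi_1}$ or $L^2_{\xi_2'}$ inside; and use Lemma~\ref{lem:chvar} in the $\sigma$-integral. When $|\xi_2'|\les\max(|\xi_1|,|\xi_2|)$ the weight $\langle\xi\rangle^{s_1}$ is absorbed by $\langle\xi_1,\xi_2\rangle^{s_1}$ and the estimate closes for all $s_1\geq 0$. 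On the region $|\xi_2'|\gg|\xi_1|,|\xi_2|$ one has $|\sigma|\sim\xi_2'^2$, and the outer weight $|\xi_2'|^{1+s_1}$ is balanced by $|\sigma|^{-1+b}\sim|\xi_2'|^{-2+2b}$ iff $s_1\leq\tfrac12$ (using $b<\tfrac12$); this yields the single bound $\|G\|_{X_S^{s_1,-b}}$ when $0\leq s_1\leq\tfrac12$. For $s_1>\tfrac12$ the balance fails; but on this region $\langle\sigma\rangle^{1/2}\sim\langle\xi\rangle$, so writing $\langle\xi\rangle^{s_1}\les\langle\xi\rangle^{1/2+}\langle\sigma\rangle^{(s_1-1/2-)/2}$ absorbs the excess $\langle\sigma\rangle$-power into the modulation weight, replacing the effective norm by $X_S^{1/2+,s_1/2-3/4}$ and producing the advertised correction term. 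Summing the contributions from all regions completes the proof.
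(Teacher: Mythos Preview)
Your proposal is correct and follows essentially the same route as the paper: both compute the spacetime Fourier transform of the Duhamel integral, split according to the size of the modulation $\lambda=\tau+|\xi|^2$, and isolate the region $|\xi_2'|\gg|\xi_1|,|\xi_2|$ (the paper's $|\eta|\gg\langle\theta,\xi\rangle$) as the source of the correction term when $s_1>\tfrac12$. The only organizational differences are that you split the time-kernel $(e^{i\lambda t}-1)/(i\lambda)$ first and reduce pieces (i) and (ii) cleanly to the free-group estimate (Proposition~\ref{prop:KSS}), whereas the paper computes the full Fourier transform at $y=0$ and handles every piece by direct kernel bounds via the weighted Schur test (Lemma~\ref{lem:schur}); in the critical region your ``Minkowski + Lemma~\ref{lem:chvar}'' sketch and the paper's weighted Schur argument with weight $|\eta|^{-s_1}$ encode the same balance $|\xi_2'|^{1+s_1}\cdot|\sigma|^{-1+b}\sim|\xi_2'|^{s_1-1+2b}$, which closes for $s_1\le\tfrac12$ exactly because $b<\tfrac12$.
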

\begin{proof} By dominated convergence theorem, it suffices to consider the evaluation at $y=0$.  
$$
 \F_{x\to \xi} \Big(\kappa(t) D_{y=0}\Big(\int_0^t e^{ i(t- t^\prime)\Delta} G  dt^\prime  \Big)\Big)=   \kappa(t)  \int_{\R^2} \frac{e^{it \lambda }-e^{- it  (\xi^2+\theta^2) }}{i(\lambda+ \xi^2+\theta^2  )}   \widehat G(\xi, \theta,\lambda) d\theta d\lambda 
$$
Therefore
$$
 \F_{x\to \xi, t\to\beta} \Big(\kappa(t) D_{y=0}\Big(\int_0^t e^{ i(t- t^\prime)\Delta} G  dt^\prime  \Big)\Big)=    \int_{\R^2} \frac{\widehat\kappa(\beta- \lambda ) -\widehat\kappa(\beta+\xi^2+\theta^2 ) }{i(\lambda+\xi^2+\theta^2  )}   \widehat G(\xi, \theta,\lambda) d\theta d\lambda 
$$

Therefore the  norm is bounded by 
$$
\Big\| \eta \la\eta,\xi\ra^{s_1}  \int_{\R^2} \frac{\widehat\kappa(\pm\eta^2-\xi^2- \lambda ) -\widehat\kappa(\pm\eta^2 +\theta^2 ) }{ \lambda+  \xi^2+\theta^2 }   \widehat G(\xi, \theta,\lambda) d\theta d\lambda\Big\|_{L^2_{\xi,\eta}  }. 
$$
We first consider the case $|\lambda+  \xi^2+\theta^2|<1$, by the mean value theorem we estimate the norm in this case by 
\begin{multline*}
\Big\| \eta \la\eta,\xi\ra^{s_1}  \int_{\R^2} \chi_{|\lambda+  \xi^2+\theta^2|<1}\frac{1}{\la\eta^2 -\theta^2\ra^M }  | \widehat G(\xi, \theta,\lambda) |  d\theta  d\lambda \Big\|_{L^2_{\xi,\eta}  }\les \\  \Big\| \eta \la\eta,\xi\ra^{s_1}  \int_{\R }  \frac{1}{\la\eta^2 -\theta^2\ra^M }  \| \widehat G(\xi, \theta,\lambda)\|_{L^2_\lambda} d\theta  \Big\|_{L^2_{\xi,\eta}  }
\end{multline*}
In the last inequality we used Cauchy-Schwarz inequality in $\lambda $ integral. 
As above it suffices to prove that the operator with kernel
$$
K_\xi(\eta,\theta)=\frac{ |\eta| \la\eta,\xi\ra^{s_1}}{\la\theta,\xi\ra^{s_1}}   \frac{1}{\la\eta^2 -\theta^2\ra^M } 
$$ 
is bounded $L^2_\theta $ to $L^2_\eta$ uniformly in $\xi$ for sufficiently large $M$. 
In the case $|\eta|\les |\theta|$ both $\eta$ and $\theta$ integrals of the kernel are $\les 1$ using 
$$
\int \frac{1}{\la\eta^2 -\theta^2\ra^M } d\theta\les \la \eta\ra^{-1}.
$$  
In the case $|\eta|\gg|\theta|$, the kernel is bounded by $|\eta|^{-M/2} $ and the same bounds hold. 

In the case $|\lambda+  \xi^2+\theta^2|>1$, we bound the norm by
$$
\Big\| \eta \la\eta,\xi\ra^{s_1}  \int_{\R^2} \frac{\la \pm\eta^2-\xi^2- \lambda \ra^{-M} +\la\pm\eta^2 +\theta^2 \ra^{-M} }{\la  \lambda+  \xi^2+\theta^2 \ra}   |\widehat G(\xi, \theta,\lambda)| d\theta d\lambda\Big\|_{L^2_{\xi,\eta}  }. 
$$
In the case $\la\eta,\xi\ra\les \la\theta,\xi\ra$, letting 
$$L:=\la\theta,\xi \ra^{s_1}\la \lambda+\xi^2+\theta^2 \ra^{-b} |\widehat G(\xi,\theta,\lambda)|,$$
and noting that $\|L\|_{L^2}\les \|G\|_{X^{s_1,-b}}$, it suffices to prove that 
the operator with kernel 
$$
K_\xi(\eta,(\theta,\lambda)) =\eta    \frac{\la \pm\eta^2-\xi^2- \lambda \ra^{-M} +\la\pm\eta^2 +\theta^2 \ra^{-M} }{\la  \lambda+  \xi^2+\theta^2 \ra^{1-b} }   
$$
is bounded $L^2_{\theta,\lambda}\to L^2_\eta$ uniformly in $\xi$. The $\eta$ integral is bounded by $\la  \lambda+  \xi^2+\theta^2 \ra^{-1+b}$ by the change of variable $\rho=\eta^2$. By the weighted Schur test the following bound  suffices:  
$$
\int_{\R^2} |\eta|    \frac{\la \pm\eta^2-\xi^2- \lambda \ra^{-M} +\la\pm\eta^2 +\theta^2 \ra^{-M} }{\la  \lambda+  \xi^2+\theta^2 \ra^{2-2b} } d \theta d\lambda    \les |\eta| \int_\R\frac1{\la\theta^2-\eta^2\ra^{2-2b}} d\theta \les 1. 
$$
Last inequality follows by considering the cases $|\theta|\ll|\eta|$ and $|\theta|\gtrsim|\eta|$.

In the case $\la \eta,\xi\ra \gg \la \theta,\xi\ra$, which implies $|\eta| \gg \la \theta,\xi\ra$, we consider the cases $0\leq s_1\leq \frac12$ and $s_1>\frac12$ seperately.   In the former case we have the following kernel
$$
K_\xi(\eta,(\theta,\lambda)) =\frac{|\eta |^{1+s_1}}{\la\theta,\xi\ra^{s_1}}    \frac{\la \pm\eta^2-\xi^2- \lambda \ra^{-M} +|\eta|^{-2M}  }{\la  \lambda+  \xi^2+\theta^2 \ra^{1-b} } .  
$$
We omit the contribution of $|\eta|^{-2M}$ which can be handled as above. For the remaining part note that
$$
\int K_\xi(\eta,(\theta,\lambda)) |\eta|^{-s_1} d\eta \les \la\theta,\xi\ra^{-s_1}
\la  \lambda+  \xi^2+\theta^2 \ra^{-1+b}.
$$
Therefore, the following bound suffices
$$
\int_{\R^2} K_\xi(\eta,(\theta,\lambda))\la\theta,\xi\ra^{-s_1}
\la  \lambda+  \xi^2+\theta^2 \ra^{-1+b}  d\theta  d\lambda \les  |\eta|^{-s_1}.
$$ 
Note that after the $\lambda$ integral  we have the bound
$$
\int_{|\theta|\ll |\eta| } \frac{|\eta|^{1+s_1}}{ \la\theta,\xi\ra^{2s_1}
|\eta|^{4-4b}}  d\theta \les |\eta|^{1-2s_1+} |\eta|^{4b-3+s_1}\les |\eta|^{-s_1}
$$
for $b<\frac12$.

In the latter case, it suffices to prove that  the operator with kernel
$$
K_\xi(\eta,(\theta,\lambda)) =\frac{|\eta |^{1+s_1}}{\la\theta,\xi \ra^{\frac12+}}    \frac{\la \pm\eta^2-\xi^2- \lambda \ra^{-M} +|\eta|^{-2M}  }{\la  \lambda+  \xi^2+\theta^2 \ra^{\frac14+\frac{s_1}2 } }   
$$
is bounded $L^2_{\theta,\lambda}\to L^2_\eta$ uniformly in $\xi$. We again ignore the contribution of $|\eta|^{-2M}$. 
For the remaining part note that
$$
\int K_\xi(\eta,(\theta,\lambda)) |\eta|^{-s_1} d\eta \les \frac{1}{\la\theta,\xi \ra^{\frac12+}
\la  \lambda+  \xi^2+\theta^2 \ra^{\frac14+\frac{s_1}2 }}.
$$
Therefore, the following bound suffices
$$
\int_{\R^2} \frac{K_\xi(\eta,(\theta,\lambda)) }{\la\theta,\xi \ra^{\frac12+}
\la  \lambda+  \xi^2+\theta^2 \ra^{\frac14+\frac{s_1}2 }}  d\theta  d\lambda \les  |\eta|^{-s_1}.
$$ 
Note that after the $\lambda$ integral  we have the bound
$$
\int_{|\theta|\ll |\eta| } \frac{|\eta|^{1+s_1}}{ \la\theta \ra^{1+}
|\eta|^{1+2s_1}}  d\theta \les |\eta|^{ - s_1 }.
$$ 
\end{proof}

\begin{prop}\label{prop:DKW} (Kato smoothing for Klein-Gordon Duhamel) We will take   $b<\frac12$.
\[
\left\| \kappa(t) \int_0^t e^{\pm i (t - t') \la\nabla\ra }  G d t' \right\|_{C^0_y \mathcal{H}^{s}_W} 
\lesssim  \left\{ \begin{array}{ll} \| G \|_{X^{s , -b}_W} + \Big\|  \frac{\chi_{|\lambda|\ll \la \xi\ra}\la \lambda \ra^{s} |\widehat G (\xi,\lambda)|}{  \la \xi\ra^{\frac12-}  }  \Big\|_{L^2_{\lambda,\xi }} & \text{ for }s  <0,  \\
\| G \|_{X^{s , -b}_W}  & \text{ for } 0\leq s  \leq \frac12,  \\
\| G \|_{X^{s, -b}_W} + \|G\|_{X_W^{\frac12+,s-1}}  & \text{ for }  \frac12 < s. \end{array}
\right. \]
\end{prop}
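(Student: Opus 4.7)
The plan is to mirror the proof of Proposition~\ref{prop:DKS} with the Schr\"odinger symbol $-|\xi|^2$ replaced by the Klein-Gordon symbol $\mp\la\xi\ra$, and with the extra technical care that the $\mathcal H^s_W$ weight $|\beta^2-\eta^2-1|^{1/4}\la\eta,\beta\ra^{s-1/2}$ demands, exactly as in the proof of Proposition~\ref{prop:KSKG}. First, dominated convergence reduces matters to the $y=0$ trace. Writing $F(x,t)$ for the resulting function and solving the Duhamel integral against $e^{i\lambda t'}$ gives the factor $(e^{i\lambda t}-e^{\pm it\la\xi\ra})/(i(\lambda\mp\la\xi\ra))$; taking the space-time Fourier transform and relabeling $\xi=(\eta,\xi_2)$ with $\eta$ the output spatial frequency and $\beta$ the output time frequency, one obtains
\[
\widehat F(\eta,\beta)=\int_{\R^2}\frac{\widehat\kappa(\beta-\lambda)-\widehat\kappa(\beta\mp\la\eta,\xi_2\ra)}{i(\lambda\mp\la\eta,\xi_2\ra)}\,\widehat G(\eta,\xi_2,\lambda)\,d\xi_2\,d\lambda.
\]
Then split the $(\lambda,\xi_2)$ domain according to whether $|\lambda\mp\la\eta,\xi_2\ra|<1$ (resonant, handled by the mean value theorem which bounds the quotient by $\la\beta\mp\la\eta,\xi_2\ra\ra^{-M}$) or $|\lambda\mp\la\eta,\xi_2\ra|\geq 1$ (non-resonant, where the two Schwartz summands are estimated separately, giving the bound $(\la\beta-\lambda\ra^{-M}+\la\beta\mp\la\eta,\xi_2\ra\ra^{-M})/\la\lambda\mp\la\eta,\xi_2\ra\ra$, exactly as in the Schr\"odinger case).

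The key algebraic observation powering the argument is that on the characteristic set $\beta=\pm\la\eta,\xi_2\ra$ one has $\beta^2-\eta^2-1=\xi_2^2$, so the outer weight $|\beta^2-\eta^2-1|^{1/4}$ collapses to $|\xi_2|^{1/2}$. Combined with the change of variables $\rho=\la\eta,\xi_2\ra$ whose Jacobian is $|\xi_2|/\la\eta,\xi_2\ra$, this is precisely the structure used in Proposition~\ref{prop:KSKG} and supplies the correct scaling to match the $\mathcal H^s_W$ norm on the output side. The actual bounds are then obtained by the weighted Schur test on the bilinear kernel, with the usual decomposition into $|\beta|\ll\la\eta,\xi_2\ra$, $|\beta|\gg\la\eta,\xi_2\ra$, and the narrow resonant slab $|\beta|\sim\la\eta,\xi_2\ra$, further subdivided by $|\xi_2|\gtrsim\la\eta\ra$ vs.\ $|\xi_2|\ll\la\eta\ra$ and by whether $||\beta|-\la\eta\ra|$ is above or below $|\xi_2|^2/\la\eta\ra$, exactly as in the three subcases used at the end of the proof of Proposition~\ref{prop:KSKG}.

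Finally one tracks the regularity parameter $s$ through each region. For $0\leq s\leq\tfrac12$ the factor $\la\lambda\mp\la\xi\ra\ra^{-b}$ in $X^{s,-b}_W$ is enough, via Schur and the change of variable $\rho=\eta^2$ on the $\eta$ side, to close all Schur integrals uniformly as long as $b<\tfrac12$. For $s>\tfrac12$, the high output modulation region where $|\beta|\gg\la\eta,\xi_2\ra$ forces $\la\eta,\beta\ra^{s-1/2}$ to exceed $\la\xi\ra^s$, producing a loss of $\la\beta\ra^{s-1/2}$; this is absorbed by the auxiliary $X^{1/2+,s-1}_W$ norm, the Klein-Gordon analogue of the $X^{1/2+,s_1/2-3/4}_S$ correction in Proposition~\ref{prop:DKS}. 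For $s<0$, the problematic region is $|\lambda|\ll\la\eta,\xi_2\ra$: there $\la\lambda\mp\la\xi\ra\ra^{-b}\sim\la\xi\ra^{-b}$, so the $X^{s,-b}_W$ weight degenerates to $\la\xi\ra^{s-b}$, which for $s<0$ cannot dominate $\widehat G$. The added term $\|\chi_{|\lambda|\ll\la\xi\ra}\la\lambda\ra^s|\widehat G|/\la\xi\ra^{1/2-}\|_{L^2}$ exactly compensates: the $\la\xi\ra^{-1/2+}$ power provides the decay required by Schur against the resonant kernel after change of variables, while $\la\lambda\ra^s$ records the actual temporal regularity of $G$. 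The main obstacle will be coordinating the resonant slab $|\beta|\sim\la\eta,\xi_2\ra\sim|\lambda|$ with the subcase partition on $|\xi_2|$ and $||\beta|-\la\eta\ra|$, since all three scales coincide and the bookkeeping must keep the boundary weight $|\beta^2-\eta^2-1|^{1/4}$ honest across the case split and across the three $s$-regimes simultaneously.
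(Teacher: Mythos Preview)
Your outline is correct and mirrors the paper's proof essentially step for step: reduction to the $y=0$ trace, the kernel bound $\la\lambda\mp\la\xi\ra\ra^{-1}(\la\beta-\lambda\ra^{-M}+\la\beta\mp\la\xi\ra\ra^{-M})$, the three-region split $|\beta|\ll,\sim,\gg\la\xi\ra$ for the $\la\beta-\lambda\ra^{-M}$ contribution, and within $|\beta|\sim\la\xi\ra$ the same three subcases (i)--(iii) on $|\xi_2|$ versus $\la\xi_1\ra$ and $||\beta|-\la\xi_1\ra|$ versus $|\xi_2|^2/\la\xi_1\ra$ that appear in Proposition~\ref{prop:KSKG}. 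You have also correctly located the source of each correction term: the region $|\beta|\gg\la\xi\ra$ generates the $X^{1/2+,s-1}_W$ term when $s>\tfrac12$, and the region $|\beta|\ll\la\xi\ra$ together with $|\lambda|\ll\la\xi\ra$ generates the weighted $L^2$ correction when $s<0$.

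One small slip: the phrase ``the change of variable $\rho=\eta^2$ on the $\eta$ side'' is a leftover from the Schr\"odinger argument and has no role here. In the Klein--Gordon setting the relevant change of variable (used in subcases (i) and (ii) of the $|\beta|\sim\la\xi\ra$ region) is $\rho=\la\xi\ra$ with $d\rho\sim(|\xi_2|/\la\xi\ra)\,d\xi_2$, which you already state correctly earlier. Also, for the $s<0$ correction the paper closes by Cauchy--Schwarz in $\xi_2$ followed by Young's inequality in $\lambda$, rather than a Schur test; your description is slightly off in mechanism but arrives at the same bound.
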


\begin{proof} By dominated convergence theorem, it suffices to consider the evaluation at $y=0$.  
$$
 \F_{x\to \xi_1} \Big(\kappa(t) D_{y=0}\Big(\int_0^t e^{ i(t- t^\prime)\la\nabla\ra  } G  dt^\prime  \Big)\Big)=   \kappa(t)  \int_{\R^2} \frac{e^{it \lambda }-e^{ it  \la\xi\ra  }}{i(\lambda-  \la \xi\ra  )}   \widehat G(\xi,\lambda) d\xi_2 d\lambda 
$$
Therefore
$$
 \F_{x\to \xi_1, t\to\beta} \Big(\kappa(t) D_{y=0}\Big(\int_0^t e^{i (t- t^\prime)\la\nabla\ra} G  dt^\prime  \Big)\Big)=    \int_{\R^2} \frac{\widehat\kappa(\beta- \lambda ) -\widehat\kappa(\beta- \la\xi\ra) }{i(\lambda-  \la\xi\ra )}   \widehat G(\xi,  \lambda) d\xi_2 d\lambda. 
$$

The $\mathcal H^{s}_W$ norm is 
\be\label{eq:duhkat}
\Big\|    \la\xi_1 ,\beta\ra^{s-\frac12} |\beta^2-\la \xi_1\ra^2|^{\frac14}  \int_{\R^2} \frac{\widehat\kappa(\beta- \lambda ) -\widehat\kappa(\beta- \la \xi\ra) }{i(\lambda-  \la \xi\ra)}   \widehat G(\xi,  \lambda) d\xi_2 d\lambda \Big\|_{L^2_{\xi_1,\beta}}.
\ee
Considering the cases $|\lambda-\la\xi\ra|<1$ and $|\lambda-\la\xi\ra|\geq1$ separately, we have the bound
$$
\frac{|\widehat\kappa(\beta- \lambda ) -\widehat\kappa(\beta- \la \xi\ra) |}{|\lambda-  \la \xi\ra|}\les \frac{\la \beta- \lambda \ra^{-M} +\la \beta- \la \xi\ra\ra^{-M} }{\la \lambda-  \la \xi\ra\ra}.
$$
We start with the contribution of  the term $\la \beta- \la \xi\ra\ra^{-M}$ in the numerator above. 
Letting 
\be\label{Lxi}
L(\xi,\lambda):=\la\xi  \ra^{s}\la \lambda-\la \xi\ra \ra^{-b} |\widehat G(\xi, \lambda)|,
\ee
and noting that $\|L\|_{L^2_{\xi,\lambda}}\les \|G\|_{X^{s,-b}}$, it suffices to prove that 
\be\label{eq:duhkat1}
\Big\|    \la\xi_1 ,\beta\ra^{s-\frac12} |\beta^2-\la \xi_1\ra^2|^{\frac14}  \int_{\R^2}\frac{ \la \beta- \la \xi\ra\ra^{-M} }{\la\xi  \ra^{ s} \la \lambda-  \la \xi\ra\ra^{1-b}} L(\xi,  \lambda) d\xi_2 d\lambda \Big\|_{L^2_{\xi_1,\beta}}\les \|L\|_{L^2_{\xi,\lambda}}.
\ee
After a Cauchy-Schwarz inequality in $\lambda$ integral, we obtain 
$$
\Big\|    \la\xi_1 ,\beta\ra^{s-\frac12} |\beta^2-\la \xi_1\ra^2|^{\frac14}  \int_{\R}\frac{ 1}{\la\xi  \ra^{ s}\la |\beta|- \la \xi\ra\ra^{M}  } \|L(\xi,  \lambda)\|_{L^2_{\lambda}} d\xi_2  \Big\|_{L^2_{\xi_1,\beta}}.
$$
This can be bounded using the estimate for \eqref{eq:temp1} above.

We now consider the  contribution of the   term $\la \beta- \lambda\ra^{-M}$ in the numerator above. The correction terms on the right hand side of the assertion of the proposition arise in this case when $|\beta|\not \sim \la \xi\ra$. In the region $|\beta|\sim \la \xi\ra$, we use the function $L$ defined in \eqref{Lxi} and bound the contribution by
\begin{multline}\label{eq:duhkat2}
\Big\|    \la\xi_1 ,\beta\ra^{s-\frac12} |\beta^2-\la \xi_1\ra^2|^{\frac14}  \int_{\R^2}\chi_{|\beta|\sim\la \xi\ra}\frac{ \la \beta- \lambda\ra^{-M} }{\la\xi  \ra^{ s} \la \lambda-  \la \xi\ra\ra^{1-b}} L(\xi,  \lambda) d\xi_2 d\lambda \Big\|_{L^2_{\xi_1,\beta}}\\
\les \Big\|    \int_{\R^2}K_{\xi_1}(\beta,(\xi_2,\lambda))
 L(\xi,  \lambda) d\xi_2 d\lambda \Big\|_{L^2_{\xi_1,\beta}}, 
\end{multline}
where  
$$
K_{\xi_1}(\beta,(\xi_2,\lambda)) =\chi_{|\beta|\sim\la \xi\ra} \frac{   ||\beta|-\la \xi_1\ra|^{\frac14}}{\la\xi  \ra^{ \frac14} \la \lambda-  \la \xi\ra\ra^{1-b}\la \beta- \lambda \ra^{M }}.
$$
We need to prove that \eqref{eq:duhkat2} $\les  \|L\|_{L^2_{\xi,\lambda}}$.

Consider the subcases: \\
i) $|\xi_2|\gtrsim \la \xi_1\ra$, \\
ii) $|\xi_2|\ll \la \xi_1\ra$ and $||\beta|-\la \xi_1\ra|\les \frac{|\xi_2|^2}{\la \xi_1\ra}$, \\
iii) $|\xi_2|\ll \la \xi_1\ra$ and $||\beta|-\la \xi_1\ra|\gg \frac{|\xi_2|^2}{\la \xi_1\ra}$.

In case i), we have 
$$\int K_{\xi_1}(\beta,(\xi_2,\lambda)) d\beta \les \frac1{\la \lambda-  \la \xi\ra\ra^{1-b}}, \text{ and }$$
$$ \int \frac{K_{\xi_1}(\beta,(\xi_2,\lambda))}{\la \lambda-  \la \xi\ra\ra^{1-b}} d\xi_2d\lambda  \les \int \frac{1}{\la \beta-  \la \xi\ra\ra^{2-2b}} d\xi_2 \les 1,$$
which suffices. The last inequality follows from the change of variable $\rho=\la \xi\ra$, noting that $d\rho=\frac{|\xi_2|}{\la\xi\ra}d\xi_2\sim d\xi_2.$

In case ii), the kernel is bounded by $|\xi_2|^{\frac12}\la \xi_1\ra^{-\frac12}\la \lambda-  \la \xi\ra\ra^{-1+b}\la \beta- \lambda \ra^{-M}$. We have 
$$\int K_{\xi_1}(\beta,(\xi_2,\lambda)) d\beta \les \frac{|\xi_2|^{\frac12}}{\la \xi_1\ra^{\frac12}\la \lambda-  \la \xi\ra\ra^{1-b}}, \text{ and }$$
$$ \int \frac{|\xi_2|^{\frac12}}{\la \xi_1\ra^{\frac12}\la \lambda-  \la \xi\ra\ra^{1-b}}   K_{\xi_1}(\beta,(\xi_2,\lambda))  d\xi_2d\lambda  \les \int \frac{|\xi_2|}{\la \xi_1\ra\la \beta-  \la \xi\ra\ra^{2-2b}} d\xi_2 \les 1,$$
by the same change of variable in the last integral.

In case iii), noting that
$$
\la \beta-\lambda\ra\la\lambda-\la\xi\ra\ra \gtrsim    \la |\beta|-\la\xi\ra\ra=\la |\beta|-\la \xi_1\ra+O(\xi_2^2/\la\xi_1\ra)\ra\sim \la |\beta|-\la \xi_1\ra\ra,
$$
the kernel is bounded by 
$ \chi_{|\xi_2|\ll (|\beta|-\la \xi_1\ra|)^{1/2} \la\xi_1\ra^{1/2}} \la \xi_1\ra^{-\frac14}\la |\beta|-\la\xi_1\ra\ra^{b-\frac34}\la \beta-\lambda\ra^{-M/2}$. Therefore, 
$$ \int  K_{\xi_1}(\beta,(\xi_2,\lambda)) d\lambda d\xi_2 \les \la |\beta|-\la\xi_1\ra\ra^{b-\frac14} \la \xi_1\ra^{ \frac14}, \text{ and } $$
$$\int K_{\xi_1}(\beta,(\xi_2,\lambda)) \la |\beta|-\la\xi_1\ra\ra^{b-\frac14} \la \xi_1\ra^{ \frac14} d\beta \les \int  \la |\beta|-\la\xi_1\ra\ra^{2b-1} \la \beta-\lambda\ra^{-M/2} d\beta\les 1.$$

  Now consider the region $|\beta|\ll \la \xi\ra$. For $s\geq 0$, we use the function $L$  defined in \eqref{Lxi} and bound the contribution by
\begin{multline}\label{eq:duhkat3}
\Big\|    \la\xi_1 ,\beta\ra^{s-\frac12} |\beta^2-\la \xi_1\ra^2|^{\frac14}  \int_{\R^2}\chi_{|\beta|\ll\la \xi\ra}\frac{ \la \beta- \lambda\ra^{-M} }{\la\xi  \ra^{ s} \la \lambda-  \la \xi\ra\ra^{1-b}} L(\xi,  \lambda) d\xi_2 d\lambda \Big\|_{L^2_{\xi_1,\beta}}\\
\les \Big\|    \int_{\R^2}K_{\xi_1}(\beta,(\xi_2,\lambda))
 L(\xi,  \lambda) d\xi_2 d\lambda \Big\|_{L^2_{\xi_1,\beta}}, 
\end{multline}
where  
$$
K_{\xi_1}(\beta,(\xi_2,\lambda)) =\chi_{|\beta|\ll\la \xi\ra} \frac{ 1}{    \la \xi\ra^{1-b}\la \beta- \lambda \ra^{M/2}}.
$$
We need to prove that \eqref{eq:duhkat3} $\les  \|L\|_{L^2_{\xi,\lambda}}$, which follows from 
$$
\int K_{\xi_1}(\beta,(\xi_2,\lambda)) d\beta \les \la \xi\ra^{b-1}, \text{ and } $$
$$\int \la \xi\ra^{b-1} K_{\xi_1}(\beta,(\xi_2,\lambda)) d\xi_2d\lambda \les \int \frac{ d\xi_2d\lambda}{    \la \xi_2\ra^{2-2b}\la \beta- \lambda \ra^{M/2}}\les 1.
$$
 
 For $s< 0$ when $|\beta|\ll \la \xi\ra$, the contribution to \eqref{eq:duhkat} is bounded by 
 $$
 \Big\|    \la\xi_1 ,\beta\ra^{s-\frac12} |\beta^2-\la \xi_1\ra^2|^{\frac14}  \int_{\R^2} \chi_{|\beta|\ll\la \xi\ra}\frac{ \la \lambda-  \beta\ra^{-M}}{\la \lambda-\la\xi\ra\ra}   \widehat G(\xi,  \lambda) d\xi_2 d\lambda \Big\|_{L^2_{\xi_1,\beta}}.
 $$ 
 When $|\lambda|\gtrsim \la \xi\ra$ the kernel is bounded by $\la \lambda\ra^{-M}$, which suffices to bound the contribution by $ \| G \|_{X^{s , -b}_W}$. In the case $|\lambda|\ll \la \xi\ra$, we have the bound 
 $$
 \Big\|     \int_{\R^2} \chi_{ |\lambda|\ll\la \xi\ra}\frac{\la\lambda\ra^{s }  }{ \la \lambda-  \beta\ra^{ M/2 }\la \xi\ra }   \widehat G(\xi,  \lambda) d\xi_2 d\lambda \Big\|_{L^2_{\xi_1,\beta}}.
 $$
 This can be bounded by the correction term in the proposition by Cauchy-Schwarz inequality in $\xi_2$ integral and by  Young's inequality.

 Finally, we consider the  region $|\beta|\gg\la \xi\ra$. For $s\leq \frac12$, we use the function $L$  defined  in \eqref{Lxi}  and consider
\begin{multline}\label{eq:duhkat4}
\Big\|    \la\xi_1 ,\beta\ra^{s-\frac12} |\beta^2-\la \xi_1\ra^2|^{\frac14}  \int_{\R^2}\chi_{|\beta|\gg\la \xi\ra}\frac{ \la \beta- \lambda\ra^{-M} }{\la\xi  \ra^{ s} \la \lambda-  \la \xi\ra\ra^{1-b}} L(\xi,  \lambda) d\xi_2 d\lambda \Big\|_{L^2_{\xi_1,\beta}}\\
\les \Big\|    \int_{\R^2}K_{\xi_1}(\beta,(\xi_2,\lambda))
 L(\xi,  \lambda) d\xi_2 d\lambda \Big\|_{L^2_{\xi_1,\beta}}, 
\end{multline}
where  
$$
K_{\xi_1}(\beta,(\xi_2,\lambda)) =\chi_{|\beta|\gg\la \xi\ra} \frac{ 1}{    \la \xi\ra^{s}\la \beta\ra^{1-b-s} \la \beta- \lambda \ra^{M/2}}.
$$
We need to prove that \eqref{eq:duhkat4} $\les  \|L\|_{L^2_{\xi,\lambda}}$, which follows from 
$$
\int K_{\xi_1}(\beta,(\xi_2,\lambda)) d\beta \les \la \xi\ra^{b-1}, \text{ and } $$
$$\int \la \xi\ra^{b-1} K_{\xi_1}(\beta,(\xi_2,\lambda)) d\xi_2d\lambda \les \int \frac{ d\xi_2d\lambda}{    \la \xi_2\ra^{2-2b}\la \beta- \lambda \ra^{M/2}}\les 1.
$$
 For $s>\frac12$ when $|\beta|\gg \la \xi\ra$, the contribution to \eqref{eq:duhkat} is bounded by 
 $$
 \Big\|    \int_{\R^2} \chi_{|\beta|\gg\la \xi\ra}\frac{\la \beta\ra^{s }  \la \lambda-  \beta\ra^{-M}}{\la \lambda-\la\xi\ra\ra}   \widehat G(\xi,  \lambda) d\xi_2 d\lambda \Big\|_{L^2_{\xi_1,\beta}}\les  \Big\|    \int_{\R^2} \frac{ \la \lambda-  \beta\ra^{-M/2}}{\la \lambda-\la\xi\ra\ra^{1-s}}   \widehat G(\xi,  \lambda) d\xi_2 d\lambda \Big\|_{L^2_{\xi_1,\beta}} .
 $$ This is bounded by the correction term in the statement by the Cauchy-Schwarz inequality in $\xi_2$ integral and Young's inequality.
\end{proof}

\section{Nonlinear Estimates} \label{sec:nonlin}
We present a priori nonlinear eestimates in this section. The proofs rely in part on the bounds established in \cite{BHHT}.  

\begin{prop}\label{prop:Ssmooth} For $s_1\geq  0$, $s_2\geq  -\frac12$, and $  a<\min(\frac12,\frac12+s_2, 1-s_1+s_2)$, we have
\[\| nu \| _{X_S^{s_1 + a, -b}} \lesssim \| n \|_{X_{W,\pm}^{s_2, b}} \|u\|_{X_S^{s_1, b}},\]
provided that $b<\frac12$ is sufficiently close to $\frac 12$. 
\end{prop}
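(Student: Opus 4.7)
The plan is to reduce the estimate by duality to the trilinear form
\[
\Bigl| \iint n\, u\, \bar v \, dx\, dt \Bigr|
\les \|n\|_{X^{s_2,b}_{W,\pm}}\, \|u\|_{X^{s_1,b}_S}\, \|v\|_{X^{-s_1-a,b}_S},
\]
and to exploit the resonance identity
\[
(\tau+|\xi|^2) - (\tau_1+|\xi_1|^2) - (\tau_2\pm\la\xi_2\ra)
= 2\xi_1 \cdot \xi_2 + |\xi_2|^2 \mp \la\xi_2\ra,
\]
which holds whenever $\xi=\xi_1+\xi_2$ and $\tau=\tau_1+\tau_2$. Passing to the Fourier side via Plancherel, the trilinear form becomes a convolution inequality after the standard Littlewood--Paley decompositions in frequency ($|\xi|\sim N$, $|\xi_1|\sim N_1$, $|\xi_2|\sim N_2$) and in modulation ($\la\tau+|\xi|^2\ra\sim L$, $\la\tau_1+|\xi_1|^2\ra\sim L_1$, $\la\tau_2\pm\la\xi_2\ra\ra\sim L_2$).

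I would then split into the customary frequency interactions: (HL$\to$H) $N\sim N_1\gg N_2$, (LH$\to$H) $N\sim N_2\gg N_1$, (HH$\to$L) $N_1\sim N_2\gg N$, and the diagonal case $N\sim N_1\sim N_2$. In each piece the resonance identity yields $\max(L,L_1,L_2)\gtrsim |2\xi_1\cdot\xi_2+|\xi_2|^2\mp\la\xi_2\ra|$, which allows us to absorb the $a$ output derivatives against the largest modulation weight. I would use the dyadic bilinear $L^2$ bounds of \cite{BHHT}, obtained there via angular decomposition of the transverse and parallel components of the interaction, essentially as a black box, modifying only the Sobolev exponents to track the smoothing gain $a$.

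The three upper bounds on $a$ read off from the dyadic summation as follows. The diagonal case has minimal resonance of size $\sim N$, so the trade $N^{2a}\les \max(L,L_1,L_2)^a$ can absorb at most $a<\tfrac12$ before exhausting the modulation weights available under $b<\tfrac12$. The (HL$\to$H) and (LH$\to$H) cases force the low wave frequency $N_2$ to carry the weight $N_2^{-s_2}$, producing the constraint $a<\tfrac12+s_2$. The (HH$\to$L) case, where the two Schr\"odinger frequencies are the largest while the wave is near the output frequency, requires distributing the $s_1$ derivatives between $u$ and $v$ while giving $N_1^{s_2}$ to the wave term, yielding $a<1-s_1+s_2$. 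The strict inequalities ensure geometric convergence of the dyadic sums in $N,N_1,N_2,L,L_1,L_2$.

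The main obstacle is the low-modulation diagonal and (HH$\to$L) regimes, where $\xi_1$ and $\xi_2$ can be nearly orthogonal so that the resonance function $2\xi_1\cdot\xi_2+|\xi_2|^2\mp\la\xi_2\ra$ degenerates on a large portion of the convolution set. In these regimes I will invoke the refined angular decomposition from \cite{BHHT}, splitting the interaction into ``parallel'' and ``transverse'' pieces so that the bilinear $L^2$ bound still holds with sharp dependence on the modulation sizes. The Klein--Gordon correction $|\xi_2|^2 \mp \la\xi_2\ra$ (as opposed to the pure wave resonance) plays no special role at high wave frequency, but it prevents the genuine degeneracy that would otherwise occur for $|\xi_2|\sim 1$, which is why the mass term in \eqref{eq:wave_lin} is convenient for closing the low-regularity estimates.
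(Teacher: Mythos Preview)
Your plan is the paper's plan: dualize to a trilinear form, decompose dyadically in frequency and modulation, and feed the pieces into the \cite{BHHT} bilinear bounds (packaged here as Propositions~\ref{prop4.4}--\ref{prop4.9} and Corollary~\ref{cor:smallN}). So the overall architecture is right.

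There is one concrete gap. In your notation, the regime $N_2\sim 1$ with $N\sim N_1$ large (wave frequency of unit size, Schr\"odinger frequencies large) is \emph{not} handled by black-boxing \cite{BHHT}. The relevant bound there is $|I|\les (L_1L_2L)^{1/3}$, but in this regime the resonance $2\xi_1\cdot\xi_2+|\xi_2|^2\mp\la\xi_2\ra$ can vanish (take $\xi_1\perp\xi_2$), so all three modulations may be $\sim 1$ simultaneously and the required bound $|I|\les N^{-a}(L_1L_2L)^b$ fails for any $a>0$. The paper flags this explicitly (``Proposition~\ref{prop4.9} does not suffice'') and instead performs a direct Cauchy--Schwarz computation on the convolution integral, exploiting that $|\xi+\xi_2|^2-|\xi_2|^2=|\xi_2|^2+2\xi_1\cdot\xi_2$ has a gradient of size $\sim N_1$ in $\xi_2$ when $|\xi_2|\les 1$; this produces a gain $N_1^{-1/2+}$ and closes the case for $a<\tfrac12$. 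Your remark that the Klein--Gordon mass ``prevents the genuine degeneracy'' at $|\xi_2|\sim 1$ does not repair this: the resonance still vanishes on a large set, and the mass term plays no role in the paper's argument here.

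A smaller point: your attribution of the constraints to the interaction types is partly off. In your (LH$\to$H) case the wave frequency $N_2$ is \emph{high}, not low, and the constraint arising there (and in HH$\to$L) is $a<1-s_1+s_2$; the constraint $a<\tfrac12+s_2$ comes only from (HL$\to$H), while $a<\tfrac12$ arises both from the diagonal/(HL$\to$H) regime via Corollary~\ref{cor:smallN} and from the $N_2\sim 1$ computation above.
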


\begin{prop}\label{prop:Wsmooth} For $s_1 \geq 0$, $s_2\geq -\frac12$, and  $a<\min(2s_1-s_2-\frac12,s_1-s_2)$, we have 
\[ \big\| \frac{\Delta}{\la\nabla\ra}( \overline{u_1}u_2) \big\|_{X_{W,\pm}^{ s_2+a, -b}} \lesssim   \|  \overline{u_1}u_2   \|_{X_{W,\pm}^{ s_2+1+a, -b}} \lesssim  \| u_1 \|_{X_S^{s_1, b}} \|u_2\|_{X^{s_1, b}_S}, \]
provided that $b<\frac12$ is sufficiently close to $\frac 12$. 
\end{prop}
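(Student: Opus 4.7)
The first inequality is immediate: the Fourier multiplier $|\xi|^2/\la\xi\ra$ is pointwise bounded by $\la\xi\ra$, so passing from $X_{W,\pm}^{s_2+1+a,-b}$ to $X_{W,\pm}^{s_2+a,-b}$ costs at most one derivative, which is absorbed by the regularity shift. The main content is therefore the bilinear bound
\[ \|\overline{u_1}u_2\|_{X_{W,\pm}^{s_2+1+a,-b}} \lesssim \|u_1\|_{X_S^{s_1,b}} \|u_2\|_{X_S^{s_1,b}}. \]
By duality this is equivalent to the trilinear estimate
\[ \Big| \int \overline{u_1}\, u_2\, w \, dx\, dy\, dt \Big| \lesssim \|u_1\|_{X_S^{s_1,b}} \|u_2\|_{X_S^{s_1,b}} \|w\|_{X_{W,\pm}^{-s_2-1-a,b}}. \]

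The plan is a dyadic decomposition of each factor in frequency ($N, N_1, N_2$) and modulation ($L, L_1, L_2$), exploiting the algebraic resonance identity. If $\overline{u_1}$ is localized near the positive paraboloid with modulation $\sigma_1$, $u_2$ near the negative paraboloid with modulation $\sigma_2$, and the output has Klein--Gordon modulation $\sigma_W = \tau \pm \la\xi\ra$ with $\xi = \xi_2 - \xi_1$, $\tau = \tau_2 - \tau_1$, then
\[ \sigma_W - \sigma_1 - \sigma_2 \;=\; \mp \la\xi\ra + |\xi_1|^2 + |\xi_2|^2 - 2\xi_1\cdot(\xi_2-\xi_1), \]
so $\max(L,L_1,L_2)$ is forced to be at least as large as this resonance function. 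This leads naturally to three regimes: (a) high-high-to-low, $N_1 \sim N_2 \gg N$; (b) high-low-to-high, $N \sim N_j \gg N_{3-j}$; and (c) comparable frequencies.

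In each regime I would invoke the corresponding sharp dyadic bilinear estimate from \cite{BHHT}. In regime (a), the transversality between the two Schr\"odinger paraboloids yields a bilinear Strichartz gain of order $(N/N_1)^{1/2}$; balancing this against the $N_1^{-2s_1} N^{s_2+1+a}$ weights produces the threshold $a < 2s_1 - s_2 - \tfrac12$. In regime (b), a simpler Strichartz-Sobolev interpolation gives the threshold $a < s_1 - s_2$. Regime (c) is harmless after the standard summation in modulation, which only requires $b < \tfrac12$ sufficiently close to $\tfrac12$.

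The main obstacle is regime (a) in two dimensions: the resonance set for two Schr\"odinger waves producing a Klein--Gordon output is a one-parameter family of circles rather than a discrete locus, and a naive application of Plancherel and Cauchy--Schwarz costs a logarithm at the endpoint. This is exactly the difficulty that \cite{BHHT} resolve via an angular decomposition of the Schr\"odinger frequencies combined with an improved bilinear estimate adapted to the restricted geometry. Since our functions, after extension, live on all of $\R^2\times\R$, these bounds apply verbatim; summing the resulting dyadic pieces against the weights $N_1^{s_1} N_2^{s_1} N^{-s_2-1-a}$ produces the geometric series that converge precisely under the hypothesis $a < \min(2s_1 - s_2 - \tfrac12, \, s_1 - s_2)$.
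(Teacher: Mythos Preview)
Your approach is essentially identical to the paper's: both reduce the bilinear $X^{s,b}$ bound by duality to the dyadic trilinear estimate \eqref{wsmooth} and then invoke the \cite{BHHT} building blocks --- Corollary~\ref{cor:smallN} (the angular Whitney decomposition) for the high--high--to--low regime, Proposition~\ref{prop4.8} for the high--low--to--high regime, and Proposition~\ref{prop4.9} for $N\sim 1$. Two small corrections: your displayed resonance identity is miscalculated (the relevant combination is $(\tau_1+|\xi_1|^2)-(\tau_2+|\xi_2|^2)-(\tau_1-\tau_2\pm|\xi_1-\xi_2|)=|\xi_1|^2-|\xi_2|^2\mp|\xi_1-\xi_2|$), and the angular machinery in \cite{BHHT} requires $N\gg 1$, so the case $N\sim 1$ must be singled out and handled via Proposition~\ref{prop4.9} rather than absorbed into your regime~(c).
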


 The following three propositions take care of the correction terms arising in Kato smoothing estimates for the Duhamel terms, see Proposition~\ref{prop:DKS} and Proposition~\ref{prop:DKW}. 
\begin{prop} \label{prop:Scsmooth} For $\frac12<s_1+a<\frac52$, and $0\leq a<\min(\frac12,\frac12+s_2,1+s_2-s_1)$, we have  
\[\|nu\|_{{X_S^{\frac12+,\frac{s_1+a}2-\frac34 }}} \lesssim \| u\|_{X^{s_1, b}_S} \| n \|_{X^{s_2, b}_{W,\pm}}, \]
provided that $b<\frac12$ is sufficiently close to $\frac 12$. 
\end{prop}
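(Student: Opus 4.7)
The proof will parallel that of Proposition~\ref{prop:Ssmooth}, with modifications to accommodate the altered target weight. By Plancherel and duality, the estimate is equivalent to the trilinear bound
$$\Big|\iiiint \widehat{n}(\xi_1,\tau_1)\,\widehat{u}(\xi_2,\tau_2)\,\overline{\widehat{w}(\xi,\tau)}\,d\xi_1 d\tau_1 d\xi_2 d\tau_2\Big|\les \|n\|_{X^{s_2,b}_{W,\pm}}\|u\|_{X^{s_1,b}_S}\|w\|_{L^2_{\xi,\tau}}$$
with $\xi=\xi_1+\xi_2$, $\tau=\tau_1+\tau_2$, after normalizing $w$ so that $\|w\|_{X^{-\frac12-,\frac34-\frac{s_1+a}{2}}_S}=1$. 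As in \cite{BHHT} we perform dyadic decompositions $|\xi_j|\sim N_j$, $|\xi|\sim N$ and $\la\sigma_n\ra\sim L_n$, $\la\sigma_u\ra\sim L_u$, $\la\sigma_w\ra\sim L_w$, where $\sigma_n=\tau_1\pm\la\xi_1\ra$, $\sigma_u=\tau_2+|\xi_2|^2$, $\sigma_w=\tau+|\xi|^2$, and exploit the resonance identity $\sigma_n+\sigma_u-\sigma_w=\Omega(\xi_1,\xi_2)$, which forces $\max(L_n,L_u,L_w)\gtrsim|\Omega|$.

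The main step is to split the output modulation into the regimes $L_w\les N^2$ and $L_w\gg N^2$. In the low regime, a direct comparison of weights yields
$$\la\xi\ra^{\frac12+}\la\sigma_w\ra^{\frac{s_1+a}{2}-\frac34}\les \la\xi\ra^{s_1+a-\delta}\la\sigma_w\ra^{-b}$$
for some $\delta>0$, provided $b$ is sufficiently close to $\frac12$ and $s_1+a>\frac12$; hence this part is absorbed directly into Proposition~\ref{prop:Ssmooth} (with an infinitesimal loss that is consistent with the strict inequalities in the hypotheses). In the high regime $L_w\gg N^2$, the resonance identity implies $\max(L_n,L_u,|\Omega|)\gtrsim L_w$, and we use the gain $L_w^{\frac12-b}$ that is available in each of the three sub-cases. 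Since $s_1+a<\frac52$ the resulting exponent $\frac{s_1+a}{2}-\frac34+b$ on $L_w$ stays strictly less than $\frac12$, making the modulation sums summable.

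After these reductions we are in a position to apply the bilinear $L^2$ Schr\"odinger--Klein-Gordon bounds of \cite{BHHT} to each dyadic frequency block, exactly as in the proof of Proposition~\ref{prop:Ssmooth}. The conditions $a<\frac12+s_2$ and $a<1+s_2-s_1$ enter when summing the dyadic frequency blocks coming from the Klein--Gordon input, ensuring geometric decay in $N_1\vee N_2$; the condition $a<\frac12$ comes from the region where the two frequencies are comparable.

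The principal technical obstacle will be, as in Proposition~\ref{prop:Ssmooth}, the (nearly-)parallel interaction regime $N_1\sim N_2\sim N$ with $|\xi_2|\ll \la\xi_1\ra$ (and the symmetric case), in which the generic bilinear $L^2$ bound degenerates and an angular decomposition of the Klein-Gordon frequency support is necessary. The additional difficulty in the present proposition is simply the book-keeping introduced by having to track the new exponent $\frac{s_1+a}{2}-\frac34$ on $L_w$ through those angular estimates; the upper bound $s_1+a<\frac52$ is exactly what guarantees $\frac{s_1+a}{2}-\frac34<\frac12$, so that the resonant summation closes.
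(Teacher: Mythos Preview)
Your overall strategy---split into $L_w\les N^2$ and $L_w\gg N^2$, reduce the low regime to Proposition~\ref{prop:Ssmooth} via a weight comparison, and handle the high regime with resonance/bilinear bounds---is exactly what the paper does. The low-regime reduction is correct and matches the paper's one-line observation that \eqref{scsmooth} follows from \eqref{ssmooth} when $L_1\les N_1^2$.

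There is, however, a genuine gap in your treatment of the high regime for the range $\tfrac32\le s_1+a<\tfrac52$. In that range the target exponent $\tfrac34-\tfrac{s_1+a}{2}$ on the output modulation is nonpositive, and the BHHT bilinear bound $|I|\les\sqrt{L_1L_2L}/\sqrt{\max(L_1,L_2,L)}$ no longer closes the estimate by itself: after trading $L\gtrsim L_1$ one is still left with a positive net power of $L_1$ (equivalently of $L$) with no upper bound. The paper accordingly separates $\tfrac12<s_1+a<\tfrac32$ from $\tfrac32\le s_1+a<\tfrac52$; in the second sub-range it first uses the resonance relation together with $L_1\gg N_1^2\gtrsim|\Omega|$ (valid when $N_1\sim\max(N,N_2)$) to force $L_1\les\max(L,L_2)$, and then abandons the BHHT bilinear bounds in favor of a direct Cauchy--Schwarz/supremum computation on the convolution integral, finishing with Lemma~\ref{lem:cv}. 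The remaining case $N_1\ll N_2\sim N$ needs yet another split on whether $L_1$ dominates. Your sentence ``apply the bilinear $L^2$ \dots\ bounds of \cite{BHHT} \dots\ exactly as in the proof of Proposition~\ref{prop:Ssmooth}'' does not cover this, and the claimed ``gain $L_w^{\frac12-b}$ available in each of the three sub-cases'' is not enough here.

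A secondary point: the roles are reversed in your ``parallel interaction'' paragraph. The angular Whitney decomposition in \cite{BHHT} (encapsulated in Corollary~\ref{cor:smallN}) is performed on the \emph{Schr\"odinger} frequency supports, not on the Klein--Gordon one, and the delicate regime is small wave frequency with two comparable Schr\"odinger frequencies. In your own notation this is $N_1\les N_2\sim N$, not ``$N_1\sim N_2\sim N$ with $|\xi_2|\ll\la\xi_1\ra$'', which is self-contradictory.
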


\begin{prop} \label{prop:Wcsmooth} For $\frac12 < s_2+a<\frac32$, and $0\leq  a< \min(2s_1-s_2-\frac12,s_1-s_2,  \tfrac{s_1+1}2-s_2)$, we have    
\[ \| \frac{\Delta}{\la\nabla\ra}(\overline{u_1}u_2) \|_{X_W^{ \frac12+, s_2+a-1}} \lesssim  \|   \overline{u_1}u_2   \|_{X_{W,\pm}^{   \frac32+, s_2+a-1}} \lesssim \| u_1 \|_{X_S^{s_1, b}} \|u_2\|_{X^{s_1, b}_S}, \] 
provided that $b<\frac12$ is sufficiently close to $\frac12$.
\end{prop}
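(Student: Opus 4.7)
The Fourier multiplier $\Delta/\la\nabla\ra$ has symbol $-|\xi|^2/\la\xi\ra$ bounded by $\la\xi\ra$ in absolute value, so $\|\tfrac{\Delta}{\la\nabla\ra}F\|_{X_W^{s,b}} \les \|F\|_{X_W^{s+1,b}}$ for every $s,b$; applied with $s=\tfrac12+$ and $b=s_2+a-1$, this yields the first inequality immediately.

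\textbf{Second inequality.} This is a bilinear Schr\"odinger-to-wave $X^{s,b}$ estimate, in a regime complementary to that of Proposition~\ref{prop:Wsmooth}: compared with the latter, the spatial wave index is lowered from $s_2+1+a$ to $\tfrac32+$, while the modulation index is raised from $-b$ to $s_2+a-1\in(-\tfrac12,\tfrac12)$, so we are trading spatial regularity for modulation regularity on the wave side. My plan is to expand the $X^{s,b}_{W,\pm}$ norm via Plancherel, dualize against a test function in $X^{-(3/2+),\,1-s_2-a}_{W,\mp}$ when the modulation exponent is positive, and reduce to a trilinear Fourier $L^2$ convolution estimate; then perform dyadic Littlewood--Paley decompositions both in frequency (scales $N_1, N_2, N$ for $u_1,u_2$ and the wave output) and in modulation (scales $L_1,L_2,L_W$), and sum the resulting dyadic bounds. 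The backbone of the analysis is the resonance identity
\be
-\sigma_1+\sigma_2\mp\sigma_W=|\xi_2|^2-|\xi_1|^2\pm\la\xi_2-\xi_1\ra,
\ee
which forces $\max(L_1,L_2,L_W)$ to dominate the right-hand side and supplies the mechanism for trading modulation against frequency. I would then split into the standard frequency interaction regimes from \cite{BHHT}---high-high to low/medium $N_1\sim N_2\ges N$, high-low to high $N_1\sim N\gg N_2$ (and its symmetric counterpart), and all scales comparable---and invoke the corresponding $L^4$/bilinear Strichartz estimates together with the angular decomposition building blocks proved there. The three constraints $a<2s_1-s_2-\tfrac12$, $a<s_1-s_2$ and $a<\tfrac{s_1+1}2-s_2$ are the sharp summability conditions in the respective cases.

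The hardest case will be the subregime of the high-high interaction in which the wave output simultaneously has high frequency and high modulation, so that the spatial weight $N^{3/2+}$ and the modulation weight $L_W^{s_2+a-1}$ (which approaches $L_W^{1/2}$ as $s_2+a\uparrow\tfrac32$) are both near their maxima. The only available gain is to trade $L_W$ against $\big||\xi_2|^2-|\xi_1|^2\big|\sim NN_1$ through the resonance identity and absorb the excess into the Schr\"odinger modulation weights $L_j^b$. Balancing these factors produces precisely the constraint $a<\tfrac{s_1+1}2-s_2$, and the dyadic sum closes with $b<\tfrac12$ sufficiently close to $\tfrac12$ to absorb endpoint losses.
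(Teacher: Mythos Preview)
Your plan is correct at the architectural level and matches the paper's approach: reduce by duality to the dyadic trilinear bound \eqref{wcsmooth}, split into frequency interaction regimes, and close using the building blocks from \cite{BHHT} together with the resonance relation. Two points where the paper's execution differs from, or sharpens, your sketch are worth noting.

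First, the paper observes at the outset that \eqref{wcsmooth} is already implied by \eqref{wsmooth} whenever $L\les N$, so one may work exclusively in the regime $L\gg N$; this shortcut simplifies several case splits and you should make it explicit. Second, and more substantively, the paper organizes the argument by splitting on the sign of the output modulation exponent: for $\tfrac12<s_2+a<1$ it uses Corollary~\ref{cor:smallN} (high--high) and a direct Cauchy--Schwarz computation (high--low and $N\sim 1$), whereas for $1\le s_2+a<\tfrac32$ it abandons the \cite{BHHT} black boxes entirely and proceeds by a bare Cauchy--Schwarz in the convolution variables followed by the calculus Lemma~\ref{lem:cv}. The reason is that once $1-s_2-a\le 0$ the target bound requires a \emph{nonpositive} power of $L$ on the right, and the \cite{BHHT} estimates, which all carry $L^{1/2}$ or $L^{1/2}/\max^{1/2}$, do not feed directly into that shape. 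Your proposal to ``trade $L_W$ against $NN_1$ via the resonance identity'' is the right intuition, but in practice the paper implements it by pulling the factor $\la\tau\pm|\xi|\ra^{2s_2+2a-2}$ inside a sup--integral and estimating $\int \chi_{|\xi_2|\les N_2}\,\la\tau-|\xi_2|^2+|\xi+\xi_2|^2\ra^{-\alpha}\,d\xi_2$ with Lemma~\ref{lem:cv}; this is where the constraints $s_2+a<s_1$ and $s_2+a<\tfrac{s_1+1}{2}$ actually emerge, not from a high--high summation as your last paragraph suggests. Adjust your case analysis accordingly.
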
 
\begin{prop} \label{prop:Wcsmooth2} For $s_1>\frac18$, we have  
\[ \| \la \xi\ra^{\frac12+}\chi_{\lambda\ll\la \xi\ra}
  \widehat{\overline{u_1}u_2 }(\xi,\lambda)  \|_{L^2_{\xi,\lambda}} \lesssim \| u_1 \|_{X_S^{s_1, b}} \|u_2\|_{X^{s_1, b}_S}\]
provided that $b<\frac12$ is sufficiently close to $\frac12$.
\end{prop}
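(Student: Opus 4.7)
The plan is to pass to duality and reduce to a trilinear convolution estimate. Let $h\in L^2_{\xi,\lambda}$; we need to bound
\[
I := \int h(\xi,\lambda)\,\la\xi\ra^{\frac12+}\chi_{|\lambda|\ll\la\xi\ra}\widehat{\overline{u_1}u_2}(\xi,\lambda)\,d\xi\,d\lambda\les \|h\|_{L^2}\,\|u_1\|_{X_S^{s_1,b}}\|u_2\|_{X_S^{s_1,b}}.
\]
Unfolding the convolution $\widehat{\overline{u_1}u_2}=\widehat{\overline{u_1}}\ast\widehat{u_2}$ and introducing nonnegative $F_j$ with $\|F_j\|_{L^2}\les \|u_j\|_{X_S^{s_1,b}}$ and $|\widehat{u_j}(\xi_j,\lambda_j)|=F_j(\xi_j,\lambda_j)/(\la\xi_j\ra^{s_1}\la\mu_j\ra^b)$, where $\mu_j := \lambda_j+|\xi_j|^2$, the problem becomes a weighted four-dimensional integral in $(\xi_1,\xi_2,\lambda_1,\lambda_2)$ with output $\xi=\xi_2-\xi_1$ and $\lambda=\lambda_2-\lambda_1$.

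The central algebraic identity is
\[
(\xi_1+\xi_2)\cdot\xi=|\xi_2|^2-|\xi_1|^2=\mu_2-\mu_1-\lambda,
\]
which, combined with $|\lambda|\ll\la\xi\ra$, forces $||\xi_2|^2-|\xi_1|^2|\les L_1+L_2+\la\xi\ra$ where $L_j=\la\mu_j\ra$. After a standard Littlewood-Paley decomposition $|\xi_j|\sim N_j$, $|\xi|\sim N$, I split into two regimes. In the \emph{high-low} regime, WLOG $N_1\gg N_2$, one has $N\sim N_1$ and the identity above forces $\max(L_1,L_2)\ges N_1^2$. The modulation weight $L_1^b L_2^b$ with $b$ close to $\frac12$ then produces a factor $\ges N_1^{1+}$ in the denominator, comfortably absorbing the loss $\la\xi\ra^{\frac12+}\sim N_1^{\frac12+}$ and leaving enough room for Cauchy-Schwarz in the remaining variables and summability in $N_2\leq N_1$. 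In the \emph{high-high} regime $N_1\sim N_2\sim M\ges N$, no automatic modulation gain is available, and one must invoke the refined bilinear Schr\"odinger estimates of \cite{BHHT} with angular sector decomposition, exploiting the fact that $|\xi_2-\xi_1|\sim N\ll M$ forces $\xi_1,\xi_2$ into neighboring angular sectors of width $\sim N/M$. These yield sharper $L^2_{t,x}$ bounds on $\overline{u_1}u_2$ than the crude $L^4_{t,x}$ Strichartz, and a dyadic summation then produces the threshold $s_1>\frac18$ claimed in the hypothesis.

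The main obstacle is the high-high regime. A naive $L^4_{t,x}$ Strichartz argument yields only $s_1>\frac14$, and the improvement down to $s_1>\frac18$ requires the angular-sector bilinear estimates of \cite{BHHT} combined with the structural restriction $|\xi|\ll\min(|\xi_1|,|\xi_2|)$ that emerges in the high-high-to-low scenario. The factor $\chi_{|\lambda|\ll\la\xi\ra}$ plays an essential role: it couples the Klein-Gordon-type low-modulation region to the Schr\"odinger resonance $(\xi_1+\xi_2)\cdot\xi$, which is precisely the setting in which the refined bilinear tools deliver the sharp threshold.
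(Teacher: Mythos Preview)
Your overall strategy matches the paper's: dualize, observe that $|\lambda|\ll\la\xi\ra$ pins the output wave-modulation to $L\approx N$, decompose dyadically, handle the high-low interaction by the forced Schr\"odinger modulation gain $\max(L_1,L_2)\gtrsim N_1^2$ (the paper reads this off Proposition~\ref{prop4.8}), and handle high-high via the BHHT angular machinery.

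The gap is that your high-high case is an assertion rather than an argument: you say the BHHT estimates ``produce the threshold $s_1>\tfrac18$'' without identifying which estimate or carrying out the computation. This is where the entire content of the proposition lies. In the paper the step is short but specific: one invokes the bound (Corollary~\ref{cor:smallN}, obtained by interpolating Propositions~\ref{prop4.4} and~\ref{prop4.6} of \cite{BHHT} across the angular Whitney decomposition)
\[
|I(f,g_1,g_2)|\ \lesssim\ N_1^{0+}\,\frac{\sqrt{L_1L_2L}}{[N_1N\,\max(L_1,L_2,L)]^{1/4}},
\]
and then uses the constraint $L\approx N$ (which you correctly flag as essential) to get $(L_1L_2)^b N^{1-2b}N_1^{-1/4}$; comparing with the target $(L_1L_2)^b N_1^{2s_1}N^{-1/2-}$ forces $2s_1>\tfrac14$. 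Your heuristic ``$\xi_1,\xi_2$ in neighboring sectors of width $\sim N/M$'' describes only the finest scale of the Whitney decomposition and does not by itself deliver the $[N_1N\max L]^{-1/4}$ factor; the full multiscale decomposition is needed. You should also treat $N\sim 1$ separately (trivial here, since then $L\approx 1$).
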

To prove Proposition~\ref{prop:Ssmooth}, by duality and the definition of $X^{s,b}$ spaces, it suffices to prove that
$$
\Big|\int_{\R\times \R^2} nu \bar v dt dx\Big|\les \| n \|_{X_{W,\pm}^{s_2, b}} \|u\|_{X_S^{s_1, b}}\|v\|_{X_S^{-s_1-a, b}}.
$$
By Plancherel's identity the left hand side is equal to
$$
\Big|\int_{\R^3\times \R^3} \widehat n(\zeta_1-\zeta_2) \widehat u(\zeta_2) \overline{\widehat v (\zeta_1)} d\zeta_2d\zeta_1  \Big|,
$$
where $\zeta_i=(\tau_i,\xi_i)\in\R\times \R^2$ are the corresponding Fourier variables. 

The claim follows from 
$$
\Big|\int_{\R^3\times \R^3} M(\zeta_1,\zeta_2) f(\zeta_1-\zeta_2) g_2(\zeta_2) g_1(\zeta_1)  d\zeta_2d\zeta_1  \Big|\les \|f\|_{L^2(\R^3)}\|g_1\|_{L^2(\R^3)}\|g_2\|_{L^2(\R^3)},
$$
where $f(\tau,\xi)=\widehat n(\tau,\xi)\la \xi\ra^{s_2}\la \tau\pm|\xi|\ra^b$, $g_1(\tau ,\xi )=\overline{\widehat {v}(\tau ,\xi )}  \la \xi \ra^{-s_1-a} \la \tau +|\xi |^2\ra^b$, $g_2(\tau ,\xi )= \widehat {u}(\tau ,\xi )  \la \xi \ra^{s_1} \la \tau +|\xi |^2\ra^b$, and 
$$
M(\zeta_1,\zeta_2)= \frac{\la\xi_1\ra^{s_1+a}}{\la  \xi_2\ra^{s_1} \la \xi_1-\xi_2\ra^{s_2} \la \tau_1-\tau_2\pm|\xi_1-\xi_2| \ra^b  \la \tau_1+|\xi_1|^2 \ra^b
\la \tau_2+|\xi_2|^2 \ra^b}.
$$

Following \cite{BHHT}, we define
$$
I(f,g_1,g_2):=\int_{\R^3\times \R^3} f(\zeta_1-\zeta_2)g_1(\zeta_1)g_2(\zeta_2)d\zeta_1d\zeta_2.
$$

 For dyadic values of $N$, $L$ greater than 1, let $\chi_W^{N,L}(\tau,\xi)$
 be the characteristic function of the set $$\mathfrak{W}^{N,L}_{\pm}=\{(\tau,\xi)\in\R \times \R^2:\la \xi\ra\sim N, \la \tau\pm |\xi|\ra\sim L\},$$ and 
 $\chi_S^{N,L}(\tau,\xi)$
be  the characteristic function of the set $$\mathfrak{S}^{N,L} = \{(\tau,\xi)\in \R \times \R^2:\la \xi\ra\sim N, \la \tau+ |\xi|^2\ra\sim L\}.$$
Note that when $f=f\chi_W^{N,L}$ and $g_i=g_i\chi_S^{N_i,L_i}$, we have $M\sim  \frac{N_1^{s_1+a}}{N_2^{s_1} N^{s_2} L^b L_1^b
L_2^b}$.  To prove Proposition~\ref{prop:Ssmooth}, it suffices to have the following estimate  for $I$ assuming that $f=f\chi_W^{N,L}$ and $g_i=g_i\chi_S^{N_i,L_i}$, and that they are $L^2$ normalized: 
\be\label{ssmooth}
|I(f,g_1,g_2)|\les \frac{N^{s_2}N_2^{s_1}}{N_1^{s_1+a}}(L_1L_2L)^b, 
\ee
for $s_1\geq 0 $, $s_2\geq -\frac12$, and $0\leq a<\min(\frac12,\frac12+s_2, 1-s_1+s_2)$, and 
provided that $b<\frac12$ is sufficiently close to $\frac 12$.  
 The claim of Proposition~\ref{prop:Ssmooth}  follows after summing in dyadic frequencies (using orthogonality and convolution structure for sum in $N$'s and since the range  of $b$ is open one can pull  an epsilon power  of $LL_1L_2$   in all estimates). 
 
In a similar fashion, Proposition~\ref{prop:Wsmooth} follows from the inequality  
\be\label{wsmooth}
|I(f,g_1,g_2)|\les \frac{N_1^{s_1}N_2^{s_1}}{N^{1+s_2+a}}(L_1L_2L)^b,
\ee
under the hypothesis of Proposition~\ref{prop:Wsmooth}. 

 Proposition~\ref{prop:Scsmooth} follows from  
\be\label{scsmooth}
|I(f,g_1,g_2)|\les \frac{N^{s_2}N_2^{s_1}}{N_1^{\frac12+}}(L_2L)^b L_1^{\frac34-\frac{s_1+a}{2}}.
\ee
And  Proposition~\ref{prop:Wcsmooth} follows from  
 \be\label{wcsmooth}
|I(f,g_1,g_2)|\les \frac{N_1^{s_1}N_2^{s_1}}{N^{\frac32+}}(L_1L_2)^b L^{1-s_2-a}.
 \ee
Finally, Proposition~\ref{prop:Wcsmooth2} follows from 
 \be\label{wcsmooth2}
|I(f,g_1,g_2)|\les \frac{N_1^{s_1}N_2^{s_1}}{N^{\frac12+}}(L_1L_2)^b 
 \ee
 under the condition that   $s_1>\frac18$ and $L\approx N$.
   
 To prove \eqref{ssmooth}, we need an angular Whitney decomposition as in \cite{BHHT}. Let, for fixed $A$ and $j$,  
 $$
 \mathfrak{Q}_j^A:=\Big\{\big(|\xi|\cos(\theta),|\xi|\sin(\theta)\big)\in\R^2: \theta  \text{ or } \theta+\pi \in \big [\frac\pi{A}(j-2),\frac\pi{A}(j+2)\big]  \Big\}.
 $$
 We also define 
 $$
 \mathfrak{S}^{N,L,A,j} := \{(\tau,\xi)\in \R \times \R^2:\la \xi\ra\sim N, \,\la \tau+ |\xi|^2\ra\sim L,  \,
 \xi\in \mathfrak{Q}_j^A\}.$$

 The following propositions are implicit in \cite{BHHT}, in the sense that, the claim is either identical to the claim of the corresponding proposition or can be obtained from the proof of it. 
 
 \begin{prop}\label{prop4.4} \cite[Proposition 4.4]{BHHT}  
 Assume that $1\ll N\les N_1\sim N_2, 64\leq A\ll N_1$, $16\leq |j_1-j_2|\leq 32$, and $\max(L_1,L_2,L)\les N_1^2$. Also assume that $f,g_1,g_2$ are $L^2$ normalized and are supported on the sets  $\mathfrak W^{N,L}$, $\mathfrak S^{N_k,L_k,A,j_k}, k=1,2,$ respectively, then
 $$|I|\les \frac{A^{\frac12}}{N_1} \sqrt{L_1L_2L}.$$
 \end{prop}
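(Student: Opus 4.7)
The plan is to follow the strategy of \cite[Proposition 4.4]{BHHT}, of which the present statement is essentially a transcription. The first move is a reduction to a bilinear $L^2$ estimate. By Parseval, the trilinear convolution form may be rewritten as
\[
I(f,g_1,g_2)=c\int_{\R\times \R^2} v(t,x)\,u_1(t,x)\,\overline{u_2(t,x)}\,dt\,dx,
\]
where $v,u_1,u_2$ denote (up to sign flips and complex conjugations dictated by the convolution structure) the spacetime inverse Fourier transforms of $f,g_1,g_2$. Cauchy--Schwarz in $(t,x)$ together with the $L^2$--normalisations $\|v\|_2=\|f\|_2=1$ and the support of $\widehat v$ in $\mathfrak W^{N,L}$ reduce matters to proving the bilinear bound
\[
\|\widehat{u_1\overline{u_2}}\|_{L^2(\mathfrak W^{N,L})}\les \frac{A^{1/2}}{N_1}\sqrt{L\,L_1 L_2}.
\]

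For the bilinear step, the angular hypothesis $16\le |j_1-j_2|\le 32$ forces the angle $\theta$ between $\xi_1\in \mathfrak Q_{j_1}^A$ and $\xi_2\in \mathfrak Q_{j_2}^A$ to satisfy $\sin\theta\sim 1/A$, and combined with $|\xi_1|\sim |\xi_2|\sim N_1$ this yields transversality of the two Schr\"odinger paraboloids $\tau_k+|\xi_k|^2=O(L_k)$. Writing
\[
\widehat{u_1\overline{u_2}}(\tau,\xi)=\int g_1(\tau_1,\xi_1)\,\overline{g_2(\tau_1-\tau,\xi_1-\xi)}\,d\tau_1\,d\xi_1,
\]
applying Cauchy--Schwarz in $(\tau_1,\xi_1)$ and integrating in $(\tau,\xi)\in \mathfrak W^{N,L}$, the estimate reduces to computing the measure of the set of $(\tau_1,\xi_1)$ meeting both modulation constraints for each fixed $(\tau,\xi)$. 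The Jacobian of the map $(\tau_1,\xi_1)\mapsto(\tau_1+|\xi_1|^2,\tau_1-\tau+|\xi_1-\xi|^2)$ (after adjoining an auxiliary coordinate) is $\sim |\xi_1|\sin\theta \sim N_1/A$, contributing the factor $A/N_1$ upon inversion; the allowed width $L$ in $\tau$ and the modulation widths $L_1, L_2$ then combine to produce the desired $A^{1/2} N_1^{-1}\sqrt{L L_1 L_2}$ upon taking square roots.

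The main obstacle is the careful bookkeeping of transversality under \emph{all} constraints simultaneously: the angular separation $\sin\theta\gtrsim 1/A$, the near-equal magnitudes $|\xi_1|\sim|\xi_2|\sim N_1$, and the localization $(\tau,\xi)\in \mathfrak W^{N,L}$, which is a thin neighbourhood of the cone $\tau=\mp|\xi|$ at scale $N$. The compatibility assumptions $A\ll N_1$, $1\ll N\le N_1$, and $\max(L_1,L_2,L)\les N_1^2$ are precisely what prevent degeneracies in the Jacobian and force $|\xi_1-\xi_2|\sim N_1/A\les N$, consistent with the constraint $|\xi|\sim N$. Since our hypotheses coincide verbatim with those of \cite[Proposition 4.4]{BHHT}, I would at this stage simply appeal to their estimate rather than reproduce the detailed bilinear analysis from scratch.
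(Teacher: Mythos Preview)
The paper does not prove this proposition at all: it is stated as a direct citation of \cite[Proposition~4.4]{BHHT}, with the remark that ``the following propositions are implicit in \cite{BHHT}, in the sense that the claim is either identical to the claim of the corresponding proposition or can be obtained from the proof of it.'' Your proposal ultimately does the same thing---appeal to \cite{BHHT}---after first sketching the bilinear $L^2$ reduction and the transversality/Jacobian mechanism behind it; so your approach is consistent with the paper's, only more expository.
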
 
  \begin{prop}\label{prop4.6} \cite[Proposition 4.6]{BHHT}  
 Assume that $1\ll N\les N_1\sim N_2, 64\leq A\ll N_1$, $16\leq |j_1-j_2|\leq 32$. Also assume that $f,g_1,g_2$ are $L^2$ normalized and are supported on the sets  $\mathfrak W^{N,L}$, $\mathfrak S^{N_k,L_k,A,j_k}, k=1,2,$ respectively, then
 $$|I|\les \frac{\sqrt{N_1} }{\sqrt{AN} } \frac{\sqrt{L_1L_2L}}{\sqrt{\max(L_1,L_2,L)}}.$$
 \end{prop}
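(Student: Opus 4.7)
My approach would be to proceed by duality and a case analysis on which of $L_1, L_2, L$ is the largest. In each case, I would apply Cauchy--Schwarz so as to factor the $L^2$ norm of the function carrying $L_{\max}$ out of the trilinear integral, leaving a bilinear $L^2$ estimate to be proved. This is the standard mechanism that produces the factor $\sqrt{L_1 L_2 L / L_{\max}} = \sqrt{L_{\min} L_{\mathrm{med}}}$: the $X^{s,b}$ slab widths each contribute an $L_k^{1/2}$ through Cauchy--Schwarz in a $\tau$ variable, except the largest, which is simply absorbed by the $L^2$ norm of the function dualized against.

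More concretely, suppose for definiteness that $L_1 = L_{\max}$. Setting
$$h(\zeta_1) := \int f(\zeta_1 - \zeta_2)\, g_2(\zeta_2)\, d\zeta_2,$$
the task reduces to the bilinear estimate $\|h\|_{L^2_{\zeta_1}} \les \sqrt{N_1/(AN)}\,\sqrt{L_2 L}\,\|f\|_2\,\|g_2\|_2$, which, once multiplied by $\|g_1\|_2$, gives exactly the claim. By Cauchy--Schwarz in $\zeta_2$ at fixed $\zeta_1$,
$$|h(\zeta_1)|^2 \leq \|f\|_2^2 \cdot \int |g_2(\zeta_2)|^2\, \mathbf{1}_{\zeta_1 - \zeta_2 \in \mathrm{supp}(f)}\, d\zeta_2,$$
so after integrating in $\zeta_1$ and applying Fubini, I am reduced to bounding $\sup_{\zeta_2} |E(\zeta_2)|$ where
$$E(\zeta_2) := \{\zeta_1 : \zeta_1 - \zeta_2 \in \mathfrak{W}^{N,L},\ \zeta_1 \in \mathfrak{S}^{N_1,L_1,A,j_1}\}.$$
Parametrizing $\zeta_1 = (\tau_1,\xi_1)$ by the modulation variables $\mu_1 = \tau_1 + |\xi_1|^2$ and $\nu = (\tau_1 - \tau_2) \pm |\xi_1 - \xi_2|$, the radial variable $|\xi_1|$, and the angular variable within $\mathfrak{Q}_{j_1}^A$ (which has width $\sim 1/A$), the $\mu_1$- and $\nu$-slabs have thickness $L_1$ and $L$ respectively, and the transversality between the paraboloid $\{\mu_1 = 0\}$ and the cone $\{\nu = 0\}$ at angularly separated base points yields a Jacobian of size $\sim AN/N_1$. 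This gives $|E(\zeta_2)| \les \tfrac{N_1 L_1 L}{AN}$, and the bilinear bound follows after $L_1^{1/2}$ is reabsorbed into $\|g_1\|_2$ at the outset. The cases $L_2 = L_{\max}$ and $L = L_{\max}$ are symmetric, with the roles of the three dispersion relations permuted.

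The main obstacle is the transversality computation. The angular separation $|j_1 - j_2| \in [16,32]$ must be used twice: first to force the frequency supports of $g_1$ and $g_2$ to make an angle $\sim 1/A$, and then to conclude that the paraboloid $\tau + |\xi|^2 = O(L_1)$ and the shifted cone $\tau \pm |\xi - \xi_2| = O(L)$ meet transversally at the relevant points. Care is needed because the sectors $\mathfrak{Q}_j^A$ contain both $\theta$ and $\theta + \pi$, so one must separately treat the \emph{aligned} sub-case where $\xi_1, \xi_2$ are nearly parallel (so $|\xi_1-\xi_2|$ can be as small as $N$, allowing $N \ll N_1$) and the \emph{antipodal} sub-case where $|\xi_1 - \xi_2|$ is forced to be $\sim N_1$ (so $N \sim N_1$); the Jacobian is computed differently in each sub-case but gives the same final bound $\sim AN/N_1$ in both. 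Once this transversality lemma is in hand, the proposition follows by combining the three cases on $L_{\max}$.
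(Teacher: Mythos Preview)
The paper does not supply its own proof of this proposition: it is quoted from \cite{BHHT}, as the sentence preceding Proposition~4.4 makes explicit. So there is nothing in the present paper to compare your argument against beyond the citation.

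Your overall scheme --- split into cases on $L_{\max}$, pair off the remaining two functions, and reduce to a volume bound via transversality --- is exactly the mechanism in \cite{BHHT}. But your Cauchy--Schwarz is applied the wrong way around and does not give the claimed bilinear estimate. In the case $L_1=L_{\max}$ you factor out $g_1$ correctly, but the inequality
\[
|h(\zeta_1)|^2\le \|f\|_2^2\int |g_2(\zeta_2)|^2\,\mathbf 1_{\zeta_1-\zeta_2\in\mathrm{supp}\,f}\,d\zeta_2
\]
leads, after Fubini, to the set $E(\zeta_2)=\{\zeta_1\in\mathrm{supp}\,g_1:\zeta_1-\zeta_2\in\mathrm{supp}\,f\}$. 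The two slabs defining $E(\zeta_2)$ have widths $L_1$ and $L$, so any bound on $|E(\zeta_2)|$ necessarily carries a factor of $L_1$, and you obtain $|I|\lesssim (N_1L_1L/(AN))^{1/2}$ rather than the required $(N_1L_2L/(AN))^{1/2}$. The phrase ``$L_1^{1/2}$ is reabsorbed into $\|g_1\|_2$ at the outset'' has no content here: $g_1$ is $L^2$--normalized, and nothing in the argument produced a compensating $L_1^{-1/2}$.

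The repair is to use the other Cauchy--Schwarz,
\[
|h(\zeta_1)|^2\le |F(\zeta_1)|\int |f(\zeta_1-\zeta_2)\,g_2(\zeta_2)|^2\,d\zeta_2,\qquad F(\zeta_1)=\{\zeta_2:\zeta_1-\zeta_2\in\mathrm{supp}\,f,\ \zeta_2\in\mathrm{supp}\,g_2\},
\]
so that the slab widths entering the volume bound are $L$ and $L_2$, the two \emph{non-maximal} modulations. The transversality computation is then between the paraboloid carrying $g_2$ and the shifted cone carrying $f$; the angular width $1/A$ of the sector $\mathfrak Q^A_{j_2}$ enters as the range of one coordinate in the parametrization, not (as you write) through the Jacobian between $\mu_1$ and $\nu$. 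Your asserted Jacobian value $AN/N_1$ and the resulting $|E(\zeta_2)|\lesssim N_1L_1L/(AN)$ are not substantiated by the sketch.
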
 
 In the propositions above, we have $I=0$ unless $A\gtrsim N_1/N$. 

   \begin{prop}\label{prop4.7} \cite[Proposition 4.7]{BHHT}    
 Assume that $1\ll N\les N_1\sim N_2,   A\sim N_1,  |j_1-j_2|\leq 16$. Also assume that $f,g_1,g_2$ are $L^2$ normalized and are supported on the sets  $\mathfrak W^{N,L}$, $\mathfrak S^{N_k,L_k,A,j_k}, k=1,2,$ respectively, then
 $$|I|\les \frac{1 }{\sqrt{N} } \frac{\sqrt{L_1L_2L}}{\sqrt{\max(L_1,L_2,L)}},$$
 and
 $$N_1\sim N  \text{ or } \max(L_1,L_2,L)\gtrsim NN_1.$$ 
 \end{prop}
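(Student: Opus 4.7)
\textbf{Proof plan for Proposition \ref{prop4.7}.} The statement has two parts: a bound on $|I|$ and the alternative ``$N_1\sim N$ or $\max(L,L_1,L_2)\gtrsim NN_1$''. I would first verify the alternative by a resonance identity, and then use the resulting geometric rigidity to prove the norm bound via a Cauchy--Schwarz measure estimate.

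\textbf{Step 1 (alternative via resonance).} Setting $M_i:=\tau_i+|\xi_i|^2$ so that $|M_i|\sim L_i$, the identity
\[
M_1-M_2 = (\tau_1-\tau_2)+(|\xi_1|^2-|\xi_2|^2) = \tau + (\xi_1+\xi_2)\cdot\xi,
\]
combined with $|\tau\pm|\xi||\sim L$ and $|\xi|\sim N$, gives $|(\xi_1+\xi_2)\cdot\xi|\les \max(L,L_1,L_2)+N$. Under $A\sim N_1$ and $|j_1-j_2|\leq 16$, the vectors $\xi_1,\xi_2$ lie in angular sectors of width $\pi/A\sim 1/N_1$ whose central directions are separated by $\les 16\pi/N_1$, while $|\xi_1|\sim|\xi_2|\sim N_1$. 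When $N_1\gg N$, the ``parallel'' configuration is the only one consistent with $|\xi_1-\xi_2|=|\xi|\sim N$, forcing $\xi$ and $\xi_1+\xi_2$ to be nearly parallel of magnitudes $\sim N$ and $\sim 2N_1$ respectively. Hence $|(\xi_1+\xi_2)\cdot\xi|\sim NN_1$, which yields $\max(L,L_1,L_2)\gtrsim NN_1$. The ``antiparallel'' configuration is only consistent with $N_1\sim N$, which falls into the first branch of the alternative.

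\textbf{Step 2 ($L^2$ bound via measure estimate).} Orienting axes so the sector direction is $e_1$, write $\xi_i = (r_i, s_i)$ with $r_i\sim\pm N_1$ and $|s_i|\les N_1\cdot(\pi/A)\sim 1$; the Schr\"odinger modulation reduces to $|\tau_i+r_i^2|\sim L_i$ modulo the negligible $s_i^2=O(1)$ term. Cauchy--Schwarz in $\zeta_1$ gives $|I|\les \sup_{\zeta\in\mathrm{supp}\,f}m(\zeta)^{1/2}$, where $m(\zeta):= |\{\zeta_1:\zeta_1\in\mathrm{supp}\,g_1,\ \zeta_1-\zeta\in\mathrm{supp}\,g_2\}|$. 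For fixed $\zeta=(\tau,\xi)$, substitute $\xi_2=\xi_1-\xi$ so $\xi_1+\xi_2=2\xi_1-\xi$; the Step~1 identity becomes $M_1-M_2 = \tau + 2\xi_1\cdot\xi - |\xi|^2$, viewed as a function of $r_1$ with gradient $2\xi^{(1)}\sim 2N$ (since $\xi$ is aligned with $e_1$). Hence the constraint $|M_1-M_2|\les L_1+L_2$ pins $r_1$ to an interval of length $\sim(L_1+L_2)/N$. Combined with the tangential width $|s_1|\les 1$ and the $\tau_1$-window of length $\sim\min(L_1,L_2)$ from intersecting two modulation slabs, this yields $m(\zeta)\les L_1L_2/N$, whence $|I|\les \sqrt{L_1L_2/N}$---matching the claim when $\max=L$.

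\textbf{Step 3 (dual Cauchy--Schwarz and combination).} To capture the $\sqrt{L/\max}$ improvement when a Schr\"odinger modulation dominates, run an analogous Cauchy--Schwarz in the $\zeta_2$-variable (or in $\zeta$), where now the wave-modulation slab of width $\sim L$ takes the place of a Schr\"odinger one in the measure count. The resulting companion bounds have the shape $|I|\les \sqrt{L_j L/N_1}\les \sqrt{L_j L/N}$ for $j=1,2$. Taking the minimum of these bounds together with the Step~2 bound (i.e., dispatching on which of $L,L_1,L_2$ attains the maximum) produces $|I|\les \frac{1}{\sqrt N}\sqrt{L_1L_2L/\max(L_1,L_2,L)}$, as claimed. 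The delicate point throughout is securing the radial compression factor $1/N$ (rather than the coarser $1/N_1$): since $\xi$ is a fixed parameter in $m(\zeta)$, $\partial_{r_1}((\xi_1+\xi_2)\cdot\xi)= 2\xi^{(1)}\sim 2N$, not $\sim N_1$. One must also ensure that the tangential $O(1)$ factor is not inflated by the sector width, and that the two Cauchy--Schwarz bounds combine without double-counting modulation restrictions.
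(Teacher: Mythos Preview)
The paper does not supply its own proof of this proposition; it is quoted from \cite[Proposition~4.7]{BHHT} with the remark that the statement ``is either identical to the claim of the corresponding proposition or can be obtained from the proof of it.'' Your argument is correct and is essentially the standard \cite{BHHT} approach: the resonance identity $M_1-M_2=\tau+(\xi_1+\xi_2)\cdot\xi=|\xi_1|^2-|\xi_2|^2+\tau$ together with the $O(1)$ transverse width of the sectors forces the alternative in Step~1, and the bilinear $L^2$ bound via Cauchy--Schwarz plus a measure estimate (using that the radial derivative of the relevant phase is $\sim N$ or $\sim N_1\ge N$ depending on which function is held out) yields the three companion inequalities whose minimum is the stated bound. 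One small point worth making explicit: in the case $N_1\sim N$ the antiparallel configuration is allowed, but then $\xi^{(1)}=r_1+r_2\sim 2N_1\sim N$, so the key radial gradient in Step~2 is still $\sim N$ and the argument goes through unchanged.
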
 
  \begin{cor}\label{cor:4446}
Under the hypothesis of Proposition~\ref{prop4.6} or Proposition~\ref{prop4.7}, we have
 $$|I|\les \min\Big( \frac{\sqrt{L_1L_2L}}{(N_1N\max(L_1,L_2,L))^{1/4}}  ,\frac{\sqrt{L_1L_2L}}{\sqrt{\max(L_1,L_2,L)}}\Big).$$
  \end{cor}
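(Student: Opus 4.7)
The estimate is a minimum of two quantities, which I establish separately. The second bound, $\sqrt{L_1L_2L}/\sqrt{\max(L_1,L_2,L)}$, is essentially cost-free: under the hypothesis of Proposition~\ref{prop4.6}, the remark following the two propositions ensures $I=0$ unless $A\gtrsim N_1/N$, so the prefactor $\sqrt{N_1}/\sqrt{AN}\les 1$; under the hypothesis of Proposition~\ref{prop4.7}, the prefactor $1/\sqrt{N}$ is already bounded by $1$ since $N\gtrsim 1$. In both cases the second bound follows directly from the stated inequality.

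For the first bound, $\sqrt{L_1L_2L}/(N_1N\max(L_1,L_2,L))^{1/4}$, I split according to which hypothesis is in force. \emph{Case A: Proposition~\ref{prop4.6}.} I further split by the size of $\max(L_1,L_2,L)$. If $\max(L_1,L_2,L)\les N_1^2$, Proposition~\ref{prop4.4} also applies, and the geometric mean of
\[
|I|\les \frac{A^{1/2}}{N_1}\sqrt{L_1L_2L}\quad\text{and}\quad |I|\les \frac{\sqrt{N_1}}{\sqrt{AN}}\cdot\frac{\sqrt{L_1L_2L}}{\sqrt{\max(L_1,L_2,L)}}
\]
yields exactly the desired bound, the factors of $A$ cancelling. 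When instead $\max(L_1,L_2,L)>N_1^2$, the hypothesis $N\le N_1$ gives $N_1N\le N_1^2<\max(L_1,L_2,L)$, so $(N_1N\max(L_1,L_2,L))^{1/4}\les\sqrt{\max(L_1,L_2,L)}$, and the first bound is deduced from the second bound already established.

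\emph{Case B: Proposition~\ref{prop4.7}.} Here the conclusion of the proposition itself supplies the dichotomy $N_1\sim N$ or $\max(L_1,L_2,L)\gtrsim NN_1$; either subcase gives $N_1\les N\max(L_1,L_2,L)$, equivalently $\sqrt{N\max(L_1,L_2,L)}\gtrsim (N_1N\max(L_1,L_2,L))^{1/4}$. Combining this with $|I|\les \sqrt{L_1L_2L}/\sqrt{N\max(L_1,L_2,L)}$ from Proposition~\ref{prop4.7} yields the first bound. The only computational step of any substance is the $A$-cancellation in the geometric mean interpolation of Case A; everything else is bookkeeping of exponents, and no estimate beyond Propositions~\ref{prop4.4}--\ref{prop4.7} is invoked.
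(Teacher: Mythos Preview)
Your proof is correct and follows essentially the same route as the paper's own argument: the second bound comes from the prefactor control ($A\gtrsim N_1/N$ under Proposition~\ref{prop4.6}, $N\gtrsim 1$ under Proposition~\ref{prop4.7}), and the first bound is obtained by the geometric mean of Propositions~\ref{prop4.4} and \ref{prop4.6} when $\max(L_1,L_2,L)\les N_1^2$ and reduces to the second bound otherwise. Your Case~B makes explicit the use of the dichotomy in Proposition~\ref{prop4.7} that the paper leaves implicit in the word ``immediate,'' but the underlying logic is identical.
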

 The bounds above are immediate under the hypothesis of Proposition~\ref{prop4.7}. Also note  that the second bound above is immediate from Proposition~\ref{prop4.6} and the lower bound on $A$. The first bound is   the geometric mean of the bounds in  Proposition~\ref{prop4.4} and  Proposition~\ref{prop4.6} when $\max(L_1,L_2,L)\les N_1^2$, and it follows from Proposition~\ref{prop4.6} otherwise.
  \begin{prop}\label{prop4.8} \cite[Proposition 4.8]{BHHT} 
 Assume that $1\leq N_1\ll N_2$ or vice versa. Also assume that $f,g_1,g_2$ are $L^2$ normalized and are supported on the sets  $\mathfrak W^{N,L}$, $\mathfrak S^{N_k,L_k}, k=1,2,$ respectively, then
 $$N\sim \max(N_1,N_2) \text{ and } \max(L_1,L_2,L)\gtrsim \max(N_1^2,N_2^2), \text{ and } $$
 $$|I|\les  \frac{\sqrt{L_1L_2L}}{\sqrt{\max(L_1,L_2,L)}}.$$
 \end{prop}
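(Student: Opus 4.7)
The plan is to prove the three conclusions in sequence, following \cite{BHHT}. The frequency identity $N \sim \max(N_1, N_2)$ is immediate from the triangle inequality applied to $\xi = \xi_1 - \xi_2$: when $N_1 \ll N_2$ one has $|\xi_1 - \xi_2| \sim |\xi_2| \sim N_2$, forcing $N \sim N_2$, and symmetrically in the opposite regime. For the modulation lower bound one uses the resonance identity
\begin{equation*}
\big[(\tau_1 - \tau_2) \pm |\xi_1 - \xi_2|\big] - \big[\tau_1 + |\xi_1|^2\big] + \big[\tau_2 + |\xi_2|^2\big] = \pm |\xi_1 - \xi_2| + |\xi_2|^2 - |\xi_1|^2.
\end{equation*}
The left side is bounded by $L + L_1 + L_2$ on the joint support, while in the regime $N_1 \ll N_2$ the right side is dominated by $|\xi_2|^2 \sim N_2^2$ (since $|\xi_1|^2 \ll N_2^2$ and $|\xi_1 - \xi_2| \sim N_2 \ll N_2^2$). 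This yields $\max(L, L_1, L_2) \gtrsim N_2^2 = \max(N_1, N_2)^2$.

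For the trilinear $L^2$ bound the plan is the standard $L^2$ Cauchy--Schwarz / measure-of-intersection argument from \cite{BHHT}. The trilinear form $I$ enjoys a permutation symmetry realized by the change of variables $\eta = \zeta_1 - \zeta_2$ and its reflections, allowing one to place whichever of $f, g_1, g_2$ carries the largest modulation in the outer Cauchy--Schwarz slot. After this reduction, Cauchy--Schwarz and a pointwise bound on the remaining bilinear convolution reduce the problem to controlling
\[
\sup_{\zeta} \big|\{\zeta' : \zeta - \zeta' \in \text{supp}(h_1),\ \zeta' \in \text{supp}(h_2)\}\big|
\]
for the two functions $h_1, h_2$ with the smaller modulations. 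This measure is bounded by the product of the spatial area of two intersecting frequency annuli and the length of the intersection of two $\tau$-slabs; combined with the modulation lower bound, the resulting estimate yields the claimed $\sqrt{L_1 L_2 L / \max(L_1, L_2, L)}$ bound.

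The main obstacle is the delicate measure estimate: a naive application of the above scheme gives an estimate that is not quite sharp, and to obtain the advertised bound one must further exploit the restriction that the outer function's support confines $\zeta$ to a specific slab around the relevant characteristic surface. The refinement is routine in the asymmetric regime $N_1 \ll N_2$ because the frequency imbalance directly supplies the required transversality, so no angular decomposition (as in Propositions \ref{prop4.4}--\ref{prop4.7}) is needed.
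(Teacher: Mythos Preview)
The paper does not supply its own proof of this proposition; it is quoted directly from \cite[Proposition~4.8]{BHHT}, as the authors explain just before stating Propositions~\ref{prop4.4}--\ref{prop4.9}. Your sketch is precisely the \cite{BHHT} argument: the constraint $N\sim\max(N_1,N_2)$ and the lower bound $\max(L_1,L_2,L)\gtrsim\max(N_1,N_2)^2$ follow from the resonance identity exactly as you wrote (note that $N_1\ll N_2$ together with $N_1\ge 1$ forces $N_2\gg 1$, so that $|\xi_1-\xi_2|\sim N_2\ll N_2^2$ is legitimate), and the trilinear bound is obtained by isolating the function with maximal modulation via Cauchy--Schwarz and then bounding the measure of the intersection set for the remaining two supports.

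One minor correction to your description of the refinement: the improvement from the crude bound $N_{\min}\sqrt{L_{\min}}$ to the sharp $\sqrt{L_{\mathrm{med}}\,L_{\min}}$ comes not from the \emph{outer} function's support constraint but from combining the two \emph{inner} modulation conditions. Once the outer variable is frozen and the inner time variable is eliminated between the two remaining modulation constraints, one obtains a level-set condition on the inner spatial variable whose gradient has size comparable to one of the large frequencies; this confines the spatial integration to a thin strip and yields the measure bound $\lesssim L_{\mathrm{med}}\,L_{\min}$. This is a point of attribution rather than a genuine gap --- the transversality you invoke is the correct mechanism, and no angular decomposition is needed in this asymmetric regime, as you note.
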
 
   \begin{prop}\label{prop4.9} \cite[Proposition 4.9]{BHHT} 
 Assume that $N\sim 1 $. Also assume that $f,g_1,g_2$ are $L^2$ normalized and are supported on the sets  $\mathfrak W^{N,L}$, $\mathfrak S^{N_k,L_k}, k=1,2,$ respectively, then
 $$|I|\les (L_1L_2L)^{\frac13}.$$
 \end{prop}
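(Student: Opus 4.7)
The plan is to convert $I$ into a spacetime integral via Plancherel, and then bound it by a trilinear H\"older inequality with three equal $L^3_{t,x}$ factors. Let $n,v_1,v_2$ denote the inverse spacetime Fourier transforms of $f$, $g_1$, $\overline{g_2(-\cdot)}$ respectively; Plancherel then recasts the trilinear form as
\[
|I(f,g_1,g_2)| \;=\; \Big|\int_{\R\times \R^2} n(x,t)\, v_1(x,t)\, \overline{v_2(x,t)}\, dx\, dt\Big|,
\]
with $\|n\|_{L^2_{t,x}} = \|v_k\|_{L^2_{t,x}} = 1$ and Fourier supports unchanged by the reflection/conjugation in $g_2$. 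H\"older with exponents $(3,3,3)$ then reduces the proof to showing $\|n\|_{L^3_{t,x}} \lesssim L^{1/6}$ and $\|v_k\|_{L^3_{t,x}} \lesssim L_k^{1/3}$ for $k=1,2$, since multiplying these bounds gives $|I| \lesssim L^{1/6}(L_1L_2)^{1/3} \leq (LL_1L_2)^{1/3}$ using $L\geq 1$.

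For the Klein--Gordon factor, the hypothesis $N\sim 1$ makes the Fourier support of $f$ have volume $\sim L$; so Cauchy--Schwarz in frequency gives the Bernstein-type bound $\|n\|_{L^\infty_{t,x}} \leq \|f\|_{L^1} \lesssim L^{1/2}$. Interpolating with $\|n\|_{L^2_{t,x}} = 1$ via $L^3 = [L^2, L^\infty]_{1/3}$ yields $\|n\|_{L^3_{t,x}} \lesssim L^{1/6}$, as required.

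For each Schr\"odinger factor $v_k$, I would slice $g_k$ along the modulation variable $\sigma = \tau + |\xi|^2$, which expresses
\[
v_k(x,t) \;=\; \int_{|\sigma|\sim L_k} e^{it\sigma}\,\bigl(e^{it\Delta} h_{k,\sigma}\bigr)(x)\, d\sigma,\qquad h_{k,\sigma} \;:=\; \mathcal{F}_\xi^{-1}\bigl[g_k(\cdot,\,\sigma-|\cdot|^2)\bigr].
\]
Minkowski in $L^4_{t,x}$, the sharp $2$D Schr\"odinger Strichartz estimate $\|e^{it\Delta}h\|_{L^4_{t,x}(\R\times\R^2)} \lesssim \|h\|_{L^2_x}$, and Cauchy--Schwarz in $\sigma$ (whose support has length $\sim L_k$) give
\[
\|v_k\|_{L^4_{t,x}} \;\lesssim\; L_k^{1/2}\Big(\int \|h_{k,\sigma}\|_{L^2_x}^2\, d\sigma\Big)^{1/2} \;=\; L_k^{1/2}\|g_k\|_{L^2} \;=\; L_k^{1/2},
\]
where the penultimate equality uses Plancherel in $\xi$. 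Interpolating with $\|v_k\|_{L^2_{t,x}} = 1$ via $L^3 = [L^2,L^4]_{2/3}$ then gives $\|v_k\|_{L^3_{t,x}} \lesssim L_k^{1/3}$, completing the argument.

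The only mildly nontrivial ingredient is transferring the $L^4_{t,x}$-Strichartz estimate to the frequency-block $g_k$; since $g_k$ is not a priori the Fourier transform of a free Schr\"odinger wave, the slicing in $\sigma$ above is what realizes it as a superposition of such waves, after which Strichartz is applied pointwise in $\sigma$. Beyond that, the proof is pure H\"older and one-parameter interpolation; I do not expect any further obstacle in the low-frequency regime $N\sim 1$.
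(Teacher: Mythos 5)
Your argument is correct, but it is worth noting how it sits relative to the paper: the paper does not prove this proposition at all --- it is imported verbatim from \cite[Proposition 4.9]{BHHT} --- so what you have written is a genuinely self-contained alternative. Your route (Plancherel, H\"older with three $L^3_{t,x}$ factors, Bernstein for the wave factor using that its Fourier support has measure $\sim N^2L\sim L$ when $N\sim 1$, and the modulation-slicing $\sigma=\tau+|\xi|^2$ combined with the $2$d $L^4_{t,x}$ Strichartz estimate for the Schr\"odinger factors) is exactly the standard transference proof that fixed-modulation pieces obey $\|v_k\|_{L^4_{t,x}}\les L_k^{1/2}\|g_k\|_{L^2}$, and all the numerology checks out: $L^{1/6}L_1^{1/3}L_2^{1/3}\le (LL_1L_2)^{1/3}$ since $L\ge 1$; in fact you prove the slightly stronger bound with $L^{1/6}$ in place of $L^{1/3}$, and you use no relation between $N_1$ and $N_2$, which is consistent with the statement. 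Two cosmetic points: the phrase ``Fourier supports unchanged by the reflection/conjugation in $g_2$'' is not literally true --- $\widehat{v_2}=\overline{g_2(-\cdot)}$ is supported on the reflected set $\{\la\xi\ra\sim N_2,\ \la\tau-|\xi|^2\ra\sim L_2\}$ --- but this is harmless because $\overline{v_2}$ has Fourier transform exactly $g_2$ and $\|v_2\|_{L^3}=\|\overline{v_2}\|_{L^3}$, so you apply the slicing to $\overline{v_2}$ (equivalently, run the same argument with $e^{-it\Delta}$, for which the $L^4$ Strichartz bound holds equally); similarly, depending on the Plancherel convention one may pick up a reflection of $f$, which only exchanges the two signs in $\mathfrak W^{N,L}_{\pm}$ and does not affect the support-measure count. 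With those bookkeeping remarks, the proof is complete and correct.
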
  
 The  corollary below follows from Propositions~\ref{prop4.4}, \ref{prop4.6}, \ref{prop4.7},  and an angular Whitney decomposition as in \cite{BHHT}. We provide a proof since the bounds we need are not all stated in \cite{BHHT}.    
\begin{cor}\label{cor:smallN}
Assume that $1\ll N\les N_1\sim N_2  $. Also assume that $f,g_1,g_2$ are $L^2$ normalized and are supported on the sets  $\mathfrak W^{N,L}$, $\mathfrak S^{N_k,L_k}, k=1,2,$ respectively, then
 $$|I|\les (L_1L_2L)^bN_1^{-\frac12}N_1^{3-6b}, \,\,0<b\leq \frac12.$$
 Moreover, 
 we  have the bound 
 $$|I|\les N_1^{0+} \min\Big(  \frac{\sqrt{L_1L_2L}}{\sqrt{\max(L_1,L_2,L)}}, \frac{\sqrt{L_1L_2L}}{[N_1N\max(L_1,L_2,L)]^{\frac14} }\Big).$$ 
\end{cor}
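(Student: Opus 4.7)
The plan is to apply the angular Whitney decomposition from \cite{BHHT}. I decompose each Schr\"odinger piece $g_k = \sum_A \sum_{j_k} g_k \chi_{\mathfrak{Q}_{j_k}^A}$ at dyadic angular scales $A \gtrsim N_1/N$ (the lower bound being forced by the convolution constraint $\xi_1 - \xi_2 \in \mathfrak{W}^{N,L}$). For each $A$, one either has a \emph{separated} pair with $16 \leq |j_1-j_2| \leq 32$, covered by Propositions~\ref{prop4.4} and \ref{prop4.6}, or a \emph{parallel} pair with $A \sim N_1$ and $|j_1-j_2| \leq 16$, covered by Proposition~\ref{prop4.7}. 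Almost orthogonality in the $\xi_1-\xi_2$ variable lets me sum over sector pairs at fixed $A$ without loss; the sum over dyadic $A$ incurs only a logarithmic factor absorbed in the $N_1^{0+}$.

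For the second (min) bound, in the separated case I take the geometric mean of Propositions~\ref{prop4.4} and \ref{prop4.6} to obtain the $A$-independent estimate
\[
|I_{A,j_1,j_2}| \lesssim \Big(\frac{A^{1/2}}{N_1}\Big)^{\!1/2} \Big(\frac{\sqrt{N_1/(AN)}}{\sqrt{\max(L_1,L_2,L)}}\Big)^{\!1/2} \sqrt{L_1 L_2 L} = \frac{\sqrt{L_1 L_2 L}}{\bigl[N_1 N \max(L_1,L_2,L)\bigr]^{1/4}},
\]
and Proposition~\ref{prop4.7} yields at least an equally strong bound in the parallel regime (using its hypotheses $N\sim N_1$ or $\max(L_1,L_2,L)\gtrsim N N_1$). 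The companion bound $\sqrt{L_1 L_2 L}/\sqrt{\max(L_1,L_2,L)}$ follows from Proposition~\ref{prop4.6} alone after absorbing $\sqrt{N_1/(AN)}\lesssim 1$ via $A\gtrsim N_1/N$. When the hypothesis $\max(L_1,L_2,L)\lesssim N_1^2$ of Proposition~\ref{prop4.4} fails, Proposition~\ref{prop4.6} still applies, and since then $\sqrt{\max(L_1,L_2,L)}\gg N_1 \geq \sqrt{N_1 N}$, its bound already realizes the minimum.

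For the first bound I start from Proposition~\ref{prop4.4} (in its range $\max(L_1,L_2,L)\lesssim N_1^2$) and split $\sqrt{L_1 L_2 L} = (L_1 L_2 L)^b (L_1 L_2 L)^{1/2-b}$; using $L_1 L_2 L \lesssim N_1^6$ I dominate the second factor by $N_1^{3-6b}$, giving $|I_{A,j_1,j_2}| \lesssim (A^{1/2}/N_1)(L_1 L_2 L)^b N_1^{3-6b}$. The dyadic sum $\sum_{A\lesssim N_1} A^{1/2} \lesssim N_1^{1/2}$ then produces $(L_1 L_2 L)^b N_1^{5/2-6b}$. The parallel case from Proposition~\ref{prop4.7} is strictly better under its hypotheses, and in the range $\max(L_1,L_2,L)\gg N_1^2$ the extra $1/\sqrt{\max(L_1,L_2,L)}$ decay from Proposition~\ref{prop4.6} absorbs the larger $L$-factors. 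The main step where I expect most of the care is the bookkeeping: verifying almost orthogonality of the angular pieces and ensuring that the iterated $\ell^2$ sum in sector pairs together with the geometric $A$-sum introduces no more than a $N_1^{0+}$ loss. This is the standard mechanism in \cite{BHHT}, but since the bounds for bound~1 and bound~2 are obtained from different interpolations of the same sector estimates, I must track the $A$-dependence separately in each case and confirm that summability survives in each.
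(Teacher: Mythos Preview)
Your proposal is correct and follows essentially the same route as the paper: angular Whitney decomposition at height $M\sim N_1$, Propositions~\ref{prop4.4}--\ref{prop4.7} on the separated and parallel pieces, Cauchy--Schwarz/almost orthogonality to sum sector pairs at fixed $A$, and a dyadic sum over $A$ (producing the $N_1^{0+}$ loss for the second bound and the $\sum_{A\les N_1}A^{1/2}\les N_1^{1/2}$ gain for the first). One small wording point: the almost orthogonality you invoke is that of the angular pieces $g_k^{A,j_k}$ of $g_1,g_2$ (so that $\sum_{j_1}\|g_1^{A,j_1}\|_{L^2}\|g_2^{A,j_2}\|_{L^2}\les \|g_1\|_{L^2}\|g_2\|_{L^2}$ via Cauchy--Schwarz), not orthogonality ``in the $\xi_1-\xi_2$ variable''; this is exactly how the paper sums the sectors.
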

\begin{proof}
Let $g_k^{A,j_k}(\tau,\xi)$ be $g_k(\tau,\xi)\chi_{\mathfrak{Q}_{j_k}^A}(\xi)$, $k=1,2$. 
 Fix a dyadic $M\geq 64$. We define an angular Whitney decomposition at height $M$ by 
 $$
 \R^2\times \R^2=\Big[\bigcup_{\stackrel{0\leq j_1,j_2\leq M-1}{  |j_1-j_2|\leq 16}}  \mathfrak{Q}_{j_1}^M\times  \mathfrak{Q}_{j_2}^M \Big]\bigcup\Big[\bigcup_{64\leq A\leq M, dyadic}\bigcup_{\stackrel{0\leq j_1,j_2\leq A-1}{16\leq |j_1-j_2|\leq 32}}  \mathfrak{Q}_{j_1}^A\times  \mathfrak{Q}_{j_2}^A\Big].
 $$
 By the Whitney decomposition above, at level $M=2^{-4} N_1$, we have 
$$
I(f,g_1,g_2)\les  \sum_{\stackrel{0\leq j_1,j_2\leq M-1}{  |j_1-j_2|\leq 16}}  I(f,g_1^{M,j_1},g_2^{M,j_2}) +\sum_{\stackrel{64\leq A\leq M}{dyadic}}\sum_{\stackrel{0\leq j_1,j_2\leq A-1}{16\leq |j_1-j_2|\leq 32}}  I(f,g_1^{A,j_1},g_2^{A,j_2}).
$$
We start with the first sum. By Proposition~\ref{prop4.7}, we have  
\be\label{tempcond}N_1\sim N  \text{ or } \max(L_1,L_2,L)\gtrsim NN_1, \ee and we can estimate the first sum above by
\begin{multline*} \les \frac{1 }{\sqrt{N} } \frac{\sqrt{L_1L_2L}}{\sqrt{\max(L_1,L_2,L)}} \sum_{\stackrel{0\leq j_1,j_2\leq M-1}{  |j_1-j_2|\leq 16}}  \|f\|_{L^2}\|g_1^{M,j_1}\|_{L^2}\|g_2^{M,j_2}\|_{L^2} \\ \les \frac{1 }{\sqrt{N} } \frac{\sqrt{L_1L_2L}}{\sqrt{\max(L_1,L_2,L)}} \|f\|_{L^2} \|g_1 \|_{L^2}\|g_2 \|_{L^2} 
\les (L_1L_2L)^bN_1^{-\frac12}N_1^{\frac32-3b}.
\end{multline*}
 In the second inequality, we used Cauchy-Schwarz inequality and almost orthogonality of $g_k^{M,j_k}$'s, and in the last inequality, we used \eqref{tempcond}. 

To estimate the second sum, we consider two cases i) $\max(L_1,L_2,L)\les N_1^2$ and ii) $\max(L_1,L_2,L)\gg N_1^2$.
In the first case, using Propositions \ref{prop4.4}, we estimate the second sum by
$$\|f\|_{L^2} \sqrt{L_1L_2L} \sum_{64\leq A\leq M}  \frac{A^{1/2}}{N_1} \sum_{\stackrel{0\leq j_1,j_2\leq A-1}{16\leq |j_1-j_2|\leq 32}}   \|g_1^{A,j_1}\|_{L^2}\|g_2^{A,j_2}\|_{L^2} 
$$
$$
\les  \sqrt{L_1L_2L} \sum_{64\leq A\leq M}  \frac{A^{1/2}}{N_1} 
 \les 
 \frac{\sqrt{L_1L_2L}}{N_1^{\frac12}  }\les (L_1L_2L)^bN_1^{-\frac12}N_1^{3-6b}.
 $$
 In the second case, using Proposition~\ref{prop4.6}, and the fact that $A\gtrsim N_1/N$, we similarly obtain
  $$
 |I|\les \frac{ \sqrt{L_1L_2L}}{ \sqrt{\max(L_1,L_2,L)}  }\les (L_1L_2L)^bN_1^{-1}N_1^{3-6b}.
 $$
 The second bound in the corollary follows similarly from Corollary~\ref{cor:4446}. The loss $N_1^{0+}$ is due to the summation over dyadic $A\les N_1$.  
\end{proof}

\begin{proof}[Proof of Proposition~\ref{prop:Ssmooth}]
As we discussed above, it suffices to prove \eqref{ssmooth},
$$
|I(f,g_1,g_2)|\les \frac{N^{s_2}N_2^{s_1}}{N_1^{s_1+a}}(L_1L_2L)^b, 
$$  
for $s_1 \geq 0$, $s_2\geq -\frac12$  and  $0\leq a<\min(\frac12,\frac12+s_2, 1-s_1+s_2)$, and 
provided that $b<\frac12$ is sufficiently close to $\frac 12$.  

In the case  $1\ll N\les N_1\sim N_2$, by Corollary~\ref{cor:smallN}, it suffices to check
$$
 (L_1L_2L)^bN_1^{-\frac12}N_1^{3-6b} \les     \frac{N^{s_2} }{N_1^{ a}}(L_1L_2L)^b.
$$
This holds for $a<\min(\frac12,\frac12+s_2)$ provided that $b$ is sufficiently close to $\frac12$.

In the case $1\leq N_2\ll N_1\sim N$, by Proposition~\ref{prop4.8}, we have 
$$
|I|\les \frac{\sqrt{L_1L_2L}}{\sqrt{\max(L_1,L_2,L)}} \les  \frac{(L_1L_2L)^b }{N_1^{6b-2}} \les  \frac{N^{s_2}N_2^{s_1} }{N_1^{s_1+ a}}(L_1L_2L)^b,$$
provided that $a<1+s_2-s_1$, $b$ is sufficiently close to $\frac12$, and $s_1\geq 0$.
In the second inequality we used $ \max(L_1,L_2,L)\gtrsim  N_1^2  $. The case $1\leq N_1\ll N_2\sim N$ is similar.

It remains to consider the case $N\sim 1$, which implies $N_1\sim N_2$.   Proposition~\ref{prop4.9} does not suffice to handle this case. 
We write (with $\zeta_i:=(\xi_i,\tau_i)\in\R^2\times \R$, $i=1,2$)
$$
\frac{|I|}{(L_1L_2L)^b} \les \int_{\R^3\times \R^3} \frac{\chi_{|\xi_1|\sim |\xi_2|\sim N_1} \chi_{|\xi_1-\xi_2|\les N} |f(\zeta_1-\zeta_2)|\ |g_1(\zeta_1)|\ |g_2(\zeta_2)|}{\la \tau_1+|\xi_1|^2\ra^b \la \tau_2+|\xi_2|^2\ra^b   \la \tau_1-\tau_2\pm|\xi_1-\xi_2|\ra^b}\ d\zeta_1d\zeta_2.
$$
It suffices to prove that the right hand side is $\les N_1^{-\frac12+}$. Letting $\zeta=\zeta_1-\zeta_2$ in the $\zeta_1$ integral, and by Cauchy-Schwarz inequality and the convolution structure, we bound the right hand side by the square root of 
\begin{multline*}
\sup_{|\xi_2|\sim N_1, \tau_2}\int_{\R\times \R^2 }\frac{  \chi_{|\xi |\les N}  }{\la \tau+\tau_2+|\xi+\xi_2|^2\ra^{2b} \la \tau_2+|\xi_2|^2\ra^{2b}   \la \tau \pm|\xi |\ra^{2b}}\ d\tau d\xi \\
\les \sup_{|\xi_2|\sim N_1, \tau_2}\int_{ |\xi|\les 1 }\frac{1  }{\la  \tau_2+|\xi+\xi_2|^2 \pm |\xi|\ra^{4b-1}  \la \tau_2+|\xi_2|^2\ra^{2b}  }\   d\xi \\
\les  \sup_{|\xi_2|\sim N_1}\int_{ |\xi|\les 1 }\frac{1  }{\la   |\xi+\xi_2|^2 -|\xi_2|^2  \ra^{4b-1}    }\   d\xi \les \int_{ |\xi|\les 1 }\frac{1  }{   |\xi|^{4b-1} N_1^{4b-1}    }\   d\xi \les N_1^{1-4b}.
\end{multline*}
\end{proof}

\begin{proof}[Proof of Proposition~\ref{prop:Wsmooth}] It suffices to prove  \eqref{wsmooth},
$$
|I(f,g_1,g_2)|\les \frac{N_1^{s_1}N_2^{s_1}}{N^{1+s_2+a}}(L_1L_2L)^b,
$$   for $a<\min(2s_1-s_2-\frac12,s_1-s_2)$    provided that $b<\frac12$ is sufficiently close to $\frac12$. 

In the case $1\ll N\les N_1\sim N_2$, this immediately follows from Corollary~\ref{cor:smallN}. In the cases $1\leq N_2\ll N_1\sim N$ and $1\leq N_1\ll N_2\sim N$, it follows from Proposition~\ref{prop4.8} as in the proof of Proposition~\ref{prop:Wsmooth}. Finally, the case $N\sim 1$ follows from Proposition~\ref{prop4.9}:
$$|I|\les (L_1L_2L)^{\frac13}\les (L_1L_2L)^b\les \frac{N_1^{s_1}N_2^{s_1}}{N^{1+s_2+a}}(L_1L_2L)^b$$
as $N\sim 1$ and $s_1\geq 0$.
\end{proof}
\begin{proof}[Proof of Proposition~\ref{prop:Scsmooth}] It suffices to prove   \eqref{scsmooth},
$$|I(f,g_1,g_2)|\les \frac{N^{s_2}N_2^{s_1}}{N_1^{\frac12+}}(L_2L)^b L_1^{\frac34-\frac{s_1+a}{2}}$$
 when $\frac12<s_1+a<\frac52$ and $0\leq a<\min(\frac12,\frac12+s_2,1+s_2-s_1)$ provided that $b<\frac12$ is sufficiently close to $\frac 12$.  Noting that the required bound follows from \eqref{ssmooth} when $L_1\les N_1^2$,  we can also assume that   $L_1\gg N_1^2$.

We first consider the range $\frac12<s_1+a<\frac32$. 
In the case $1\ll N\les N_1\sim N_2$, using the second claim of Corollary~\ref{cor:smallN} we have
$$
|I|\les  \frac{\sqrt{ L_1L_2L}}{\sqrt{\max(L_1,L_2,L)}}.
$$
It suffices to note that
$$
 \frac{\sqrt{ L_1L_2L}}{\sqrt{\max(L_1,L_2,L)}}\les  \frac{ (L_2L)^b  \sqrt{L_1}}{\max(L_1,L_2,L)^{2b-\frac12} }\les   (L_2L)^b   L_1^{1-2b}   \les  \frac{N^{s_2}N_2^{s_1}}{N_1^{\frac12+}}(L_2L)^b L_1^{\frac34-\frac{s_1+a}{2}}
$$
  provided that $a<\frac12$ and $b$ is sufficiently close to $\frac12$.

 In the cases $1\leq N_2\ll N_1\sim N$ and $1\leq N_1\ll N_2\sim N$, by Proposition~\ref{prop4.8} we have
$$
|I|\les  \frac{\sqrt{ L_1L_2L}}{\sqrt{\max(L_1,L_2,L)}}\les  (L_2L)^b   L_1^{1-2b}   \les  \frac{N^{s_2}N_2^{s_1}}{N_1^{\frac12+}}(L_2L)^b L_1^{\frac34-\frac{s_1+a}{2}},
$$ 
 for $a<s_2-s_1+1$ provided that $b$ is sufficiently close to $\frac12$.
 
 In the case $N\sim1$, as in the proof of Proposition~\ref{prop:Ssmooth} above, we have
 $$
 \frac{|I|}{(L_2L)^b L_1^{\frac34-\frac{s_1+a}2}}\les \sup_{|\xi_2|\sim N_1, \tau_2} \sqrt{ \int_{\R\times \R^2 }\frac{  \chi_{|\xi |\les N}  }{\la \tau+\tau_2+|\xi+\xi_2|^2\ra^{\frac32- s_1-a  } \la \tau_2+|\xi_2|^2\ra^{2b}   \la \tau \pm|\xi |\ra^{2b}}\ d\tau d\xi}
 $$ 
$$\les \sup_{|\xi_2|\sim N_1 } \sqrt{ \int_{ |\xi |\les 1 }\frac{ 1  }{\la  |\xi+\xi_2|^2 -|\xi_2|^2\ra^{2b+\frac12- s_1-a  } }\   d\xi} 
\les \frac1{N_1^{b+\frac14-\frac{s_1+a}2}} \les N_1^{s_1-\frac12-},
$$
which holds for $a<\frac12$.

We now consider the range $\frac32\leq s_1+a<\frac52$. We have   two cases: $N_1\sim\max(N,N_2)$ and $N_1\ll N_2\sim N$.

In the former case, since $L_1\gg N_1^2 \gtrsim  |\tau_1+|\xi_1|^2 - \tau_2-|\xi_2|^2   - ( \tau_1-\tau_2\pm|\xi_1-\xi_2|)|$,
we have $L_1\les \max(L,L_2)$. Without loss of generality assume $L_1\les L$. We have 
$$
\frac{|I|}{(L_2L)^b L_1^{\frac34-\frac{s_1+a}2} }  \les \frac{|I|}{L_2^bL^{0+} L_1^{b+\frac34-\frac{s_1+a}2-}  } \les \frac{|I|}{L_2^bL^{0+} N_1^{2b+\frac32- s_1-a - }}$$
$$ \les N_1^{-2b-\frac32+ s_1+a + }\sup_{|\xi_1|\sim N_1, \tau_1+|\xi_1|^2\sim L_1} \sqrt{ \int_{\R\times \R^2 }\frac{  \chi_{|\xi_2|\sim N_2} \chi_{|\xi_1-\xi_2|\sim N}   }{ \la \tau_2+|\xi_2|^2\ra^{2b}   \la \tau_1-\tau_2 \pm|\xi_1-\xi_2 |\ra^{0+}}\ d\tau_2 d\xi_2}
$$ 
$$\les N_1^{-2b-\frac32+ s_1+a + } \sup_{|\xi_1|\sim N_1,\tau_1+|\xi_1|^2\sim L_1} \sqrt{ \int_{  \R^2 }  \chi_{|\xi_2|\sim N_2} \chi_{|\xi_1-\xi_2|\sim N}     \ d\xi_2}
$$ 
$$
\les  N_1^{  s_1+a -\frac52+ }    \min(N,N_2)\les \frac{N^{s_2}N_2^{s_1}}{N_1^{\frac12+}},
$$
which holds under the conditions of the proposition.

In the latter case, if $L_1\les \max(L,L_2)$ the argument above remains valid. In the case $L_1\gg \max (L,L_2)$, we have $L_1\sim N_2^2$. Therefore, we have 
$$
\frac{|I|}{(L_2L)^b L_1^{\frac34-\frac{s_1+a}2} } \les N_2^{s_1+a-\frac32  } \sup_{|\xi_1|\sim N_1, \tau_1+|\xi_1|^2\sim L_1} \sqrt{ \int_{\R\times \R^2 }\frac{  \chi_{|\xi_2|\sim N_2} \chi_{|\xi_1-\xi_2|\sim N}   }{ \la \tau_2+|\xi_2|^2\ra^{2b}   \la \tau_1-\tau_2 \pm|\xi_1-\xi_2 |\ra^{2b}}\ d\tau_2 d\xi_2}
$$
$$\les  N_2^{s_1+a-\frac32  }\sup_{|\xi_1|\sim N_1,\tau_1+|\xi_1|^2\sim L_1} \sqrt{ \int_{  \R^2 }\frac{  \chi_{|\xi_2|\sim N_2} \chi_{|\xi_1-\xi_2|\sim N}   }{   \la \tau_1+|\xi_2|^2\pm|\xi_1-\xi_2 |\ra^{4b-1}}\ d\xi_2}\les N_2^{s_1+a-\frac32+2-4b}
$$
which suffices. The last inequality follows from Lemma~\ref{lem:cv}.
\end{proof}

\begin{proof}[Proof of Proposition~\ref{prop:Wcsmooth}] 
Once again, it suffices to prove \eqref{wcsmooth}:  
 $$
|I(f,g_1,g_2)|\les \frac{N_1^{s_1}N_2^{s_1}}{N^{\frac32+}}(L_1L_2)^b L^{1-s_2-a} 
 $$
when $\frac12 < s_2+a<\frac32$,  $0\leq  a< \min(2s_1-s_2-\frac12,s_1-s_2,  \tfrac{s_1+1}2-s_2)$,  
provided that $b<\frac12$ is sufficiently close to $\frac12$. Note that this follows from \eqref{wsmooth} if  $L\les N$, and hence we can assume $L\gg N$.
 
 First consider the case $\frac12<s_2+a<1$. 
 In the case $1\ll N\les N_1\sim N_2$, using the second claim of Corollary~\ref{cor:smallN}, we have
 $$|I(f,g_1,g_2)|\les  \frac{\sqrt{L_1L_2L}}{\sqrt{\max(L_1,L_2,L)}}\les  \frac{ (L_1L_2)^b L^{1-s_2-a}}{\max(L_1,L_2,L)^{2b-s_2-a}}
  $$
   $$
\les \frac{   (L_1L_2)^b L^{1-s_2-a}}{N^{ 2b-s_2-a} }\les \frac{N_1^{2s_1} }{N^{\frac32+}}(L_1L_2)^b L^{1-s_2-a},
 $$
 provided that    $a<2s_1-s_2-\frac12$ and that $b<\frac12$ is sufficiently close to $\frac12$. 
 
 In the case $N\sim 1$, we have
 $$
 \frac{|I|}{(L_1L_2)^b L^{1- s_2-a}  }\les \sup_{|\xi_2|\sim N_1, \tau_2} \sqrt{ \int_{\R\times \R^2 }\frac{  \chi_{|\xi |\les N}  }{\la \tau+\tau_2+|\xi+\xi_2|^2\ra^{ 2b  } \la \tau_2+|\xi_2|^2\ra^{2b}   \la \tau \pm|\xi |\ra^{2-2 s_2-2a}}\ d\tau d\xi}.
 $$ 
Note that the $\tau$ integral is $\les 1$ provided that  $2-2s_2-2a+2b>1$, and since $N\sim 1$, the $\xi$ integral is bounded as well. Therefore, the integral above is $\les 1\les  \frac{N_1^{s_1}N_2^{s_1}}{N^{\frac32+}}$. 

In the case $1\leq N_2\ll N_1\sim N$ (the case $1\leq N_1\ll N_2\sim N$ follows by symmetry), we have 
 $$
 \frac{|I|}{(L_1L_2)^b L^{1- s_2-a}  }\les \frac1{N^{1-s_2-a}} \sup_{|\xi|\sim N, \tau} \sqrt{ \int_{\R\times \R^2 }\frac{  \chi_{|\xi_2 |\les N_2}  }{\la \tau+\tau_2+|\xi+\xi_2|^2\ra^{2b } \la \tau_2+|\xi_2|^2\ra^{2b}   }\ d\tau_2 d\xi_2} 
 $$ 
 $$
 \les \frac1{N^{1-s_2-a}}  \sup_{|\xi|\sim N, \tau} \sqrt{ \int_{ \R^2 }\frac{  \chi_{|\xi_2 |\les N_2}  }{\la \tau-|\xi_2|^2+|\xi+\xi_2|^2\ra^{ 4b-1  }   }\   d\xi_2} 
 $$
 $$
 =\frac1{N^{1-s_2-a}}  \sup_{|\xi|\sim N, \tau} \sqrt{ \int_{ \R^2 }\frac{  \chi_{|\xi_2 |\les N_2}  }{\la \tau+|\xi|^2+2 \xi\cdot \xi_2 \ra^{ 4b-1  }   }\   d\xi_2} \les  \frac1{N^{1-s_2-a}} \frac{N_2^{\frac32-2b }}{N^{2b-\frac12 }}\les N^{s_1-\frac32-}N_2^{s_1},
 $$
 provided that $a<s_1-s_2$ and $b$ is sufficiently close to $\frac12$. In the second ineqality we used $L\gtrsim N$, and in the second to last inequality we used Lemma~\ref{lem:cv}.
 
 We now consider the case $1\leq s_2+a<\frac32$. By symmetry, it suffices to consider the case  $N_2, N\les N_1 $.
 We have
  $$
 \frac{|I|}{(L_1L_2)^b L^{1- s_2-a}  }\les \sup_{|\xi|\sim N, \tau} \sqrt{ \int_{\R\times \R^2 }\frac{  \chi_{|\xi_2 |\les N_2}    \la \tau \pm|\xi |\ra^{  2 s_2+2a -2}}{\la \tau+\tau_2+|\xi+\xi_2|^2\ra^{2b } \la \tau_2+|\xi_2|^2\ra^{2b} }\ d\tau_2 d\xi_2} 
 $$ 
  $$
 \les   \sup_{|\xi|\sim N, \tau} \sqrt{ \int_{ \R^2 }\frac{  \chi_{|\xi_2 |\les N_2}   \la \tau \pm|\xi |\ra^{  2 s_2+2a -2} }{\la \tau-|\xi_2|^2+|\xi+\xi_2|^2\ra^{ 4b-1  }   }\   d\xi_2} 
 $$
   $$
 \les   \sup_{|\xi|\sim N, \tau} \sqrt{ \int_{ \R^2 }\frac{  \chi_{|\xi_2 |\les N_2}   N_1^{  4 s_2+4a -4} }{\la \tau-|\xi_2|^2+|\xi+\xi_2|^2\ra^{ 4b-1  }   }\   d\xi_2 +\int_{ \R^2 }\frac{  \chi_{|\xi_2 |\les N_2}     }{\la \tau-|\xi_2|^2+|\xi+\xi_2|^2\ra^{ 4b+1-2s_2-2a  }   }\   d\xi_2 } 
 $$
    $$
 \les  N_2^{\frac32-2b}N^{-2b+\frac12}N_1^{2s_2+2a-2} + N_2^{s_2+a-2b+\frac12}  N^{s_2+a-2b-\frac12}\les  \frac{N_1^{s_1}N_2^{s_1}}{N^{\frac32+}}
 $$
 provided that $s_2+a<\min(s_1,\tfrac{s_1+1}2)$  and $b$ is sufficiently close to $\tfrac12$. We used Lemma~\ref{lem:cv} to estimate  the integrals in $\xi_2$. 
\end{proof}
\begin{proof}[Proof of Proposition~\ref{prop:Wcsmooth2}]
In the case $1\ll N\les N_1\approx  N_2$, by the last bound in Corollary~\ref{cor:smallN}, we have (using $L\approx N$)
$$
|I|\les   \frac{\sqrt{L_1L_2L}}{[N_1N\max(L_1,L_2,L)]^{\frac14} }\les (L_1L_2)^bN^{1-2b}N_1^{-\frac14}\les (L_1L_2)^b\frac{N_1^{2s_1}}{N^{\frac12+}}
$$
provided that $s_1>\frac18$ and $b$ is sufficiently close to $\frac12$. 

In the case $1\leq N_2\ll N_1\approx N$, by Proposition~\ref{prop4.8}, we have 
$$|I|\les   \frac{\sqrt{L_1L_2L}}{ \max(L_1,L_2,L)^{\frac12} }\les (L_1L_2)^b N^{-\frac12+}\les (L_1L_2)^b\frac{N_1^{ s_1} N_2^{s_1} }{N^{\frac12+}}
$$
provided that $s_1>0$ and $b$ is sufficiently close to $\frac12$. In the second inequality we used the facts that $L\approx N$ and $\max(L_1,L_2,L)\gtrsim N^2$. 

In the case $L\approx N\sim 1$, we have the bound $(L_1L_2L)^{\frac13}\les L_1^bL_2^b$, which suffices for $s_1\geq 0$.
\end{proof}

 \section{Local well--posedness and smoothing} \label{sec:lwp}
The assertion of Theorem~\ref{thm:local} follows from the a priori linear estimates established in Section~\ref{sec:lin} and the nonlinear estimates in Section~\ref{sec:nonlin}. The proof follows along the lines of the proof of Theorem 1.3 in Section 5 of \cite{etza} (also see the proof of Theorem 2.4 in Section 4 of \cite{etnls}). In particular, the fix point argument for equation \eqref{eq:duhamel} in the $X^{s,b}$ space
for the linear solution follows from   Propositions~\ref{prop:SXsb} and ~\ref{prop:KGXsb}, and the Kato smoothing bounds in Propositions~\ref{prop:KSS} and ~\ref{prop:KSKG}.  For the nonlinear terms we use Propositions~\ref{prop:Ssmooth}--\ref{prop:Wcsmooth2} and the properties \eqref{eq:xs1}, \eqref{eq:xs2}, \eqref{eq:xs3} of $X^{s,b}$ spaces. 
Similarly, the   Schr\"odinger and Klein-Gordon parts of the solution   belong to 
$  C^0_tH^{s_1}_{x,y}([0,T]\times U) \cap C^0_y\cH^{s_1}_S(\R^+ \times \R\times [0,T]) $  and $  C^0_tH^{s_2}_{x,y}([0,T]\times U) \cap C^0_y\cH^{s_2}_W(\R^+ \times \R\times [0,T]) $, respectively, using  Propositions~\ref{prop:KSS}, \ref{prop:KSKG}, \ref{prop:tracelem}, \ref{prop:DKS}, and \ref{prop:DKW}. These a priori estimates also imply continuous dependence  on initial data, see  Section 5 of \cite{etza}. 
Note that the solution is unique once we fix an extension of the initial data, however it is not clear whether the restriction of the solution to the half plane is independent of the extension.

Finally, the smoothing bound in Theorem~\ref{thm:smooth} follows from the same estimates as in the proof of Theorem 1.1 in  Section 5 of \cite{etnls}. 
 
\section{Appendix}

Below, we list four lemmas that are used repeatedly throughout the paper.   For the proof of the first lemma see  the Appendix of \cite{etza}. The second lemma is the weighted Schur's test,  and the last two are  elementary calculus lemmas.  
\begin{lemma}\label{lem:sums}   If  $\beta\geq \gamma\geq 0$ and $\beta+\gamma>1$, then
\be\nn
\int_\R \frac{1}{\la \tau-k_1\ra^\beta \la \tau-k_2\ra^\gamma} d\, \tau \lesssim \la k_1-k_2\ra^{-\gamma} \phi_\beta(k_1-k_2),
\ee
where
 \be\nn
\phi_\beta(k):=\sum_{|n|\leq |k|}\frac1{\la n\ra^\beta}\sim \left\{\begin{array}{ll}
1, & \beta>1,\\
\log(1+\la k\ra), &\beta=1,\\
\la k \ra^{1-\beta}, & \beta<1.
 \end{array}\right.
\ee
The statement remains valid when $\la \tau-k_2\ra$ is replaced with $| \tau-k_2|$ provided that $\gamma<1$.
\end{lemma}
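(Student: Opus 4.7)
The plan is to reduce to a one-parameter family of integrals by translation invariance, then split the real line into three pieces according to which singularity dominates.

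First, by the change of variables $\tau\mapsto \tau+k_2$, we may assume $k_2=0$ and set $k:=k_1-k_2$. The task reduces to showing
\[
I(k):=\int_\R \frac{d\tau}{\la \tau-k\ra^\beta\la \tau\ra^\gamma}\les \la k\ra^{-\gamma}\phi_\beta(k).
\]
We then split $\R = A\cup B\cup C$, where $A=\{|\tau|\leq |k|/4\}$ (near $0$, so $\la\tau-k\ra\sim\la k\ra$), $B=\{|\tau-k|\leq |k|/4\}$ (near $k$, so $\la\tau\ra\sim\la k\ra$), and $C$ is the remainder, where $\la\tau\ra\sim\la\tau-k\ra\gtrsim \la k\ra$.

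On region $B$, where $\la\tau\ra\sim\la k\ra$, we pull this factor out and bound
\[
\int_{|\tau-k|\leq |k|/4}\frac{d\tau}{\la\tau-k\ra^\beta\la\tau\ra^\gamma}\les \la k\ra^{-\gamma}\int_{|\sigma|\les |k|}\frac{d\sigma}{\la\sigma\ra^\beta}=\la k\ra^{-\gamma}\phi_\beta(k),
\]
which matches the right-hand side exactly (this is the part that produces the three regimes of $\phi_\beta$: bounded, logarithmic, or polynomial depending on whether $\beta>1$, $\beta=1$, or $\beta<1$). On region $A$, by symmetry we have the bound $\la k\ra^{-\beta}\phi_\gamma(k)$, which is dominated by $\la k\ra^{-\gamma}\phi_\beta(k)$ in every regime because $\beta\geq\gamma$ (one checks the three sub-cases $\gamma>1$, $\gamma=1$, $\gamma<1$ separately, using $\beta\geq\gamma$ and $\beta+\gamma>1$). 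On region $C$, the integrand is dominated by $\la\tau\ra^{-\beta-\gamma}$, and since $\beta+\gamma>1$ we get
\[
\int_{|\tau|\gtrsim|k|}\la\tau\ra^{-\beta-\gamma}d\tau\les \la k\ra^{1-\beta-\gamma},
\]
which is again absorbed into $\la k\ra^{-\gamma}\phi_\beta(k)$ after checking the three cases for $\phi_\beta$.

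For the last assertion (replacing $\la\tau-k_2\ra$ by $|\tau-k_2|$), the only difference is an additional neighborhood of $\tau=0$; there the integrand is $\les |\tau|^{-\gamma}\la k\ra^{-\beta}$, which is integrable provided $\gamma<1$, contributing at most $\la k\ra^{-\beta}$ (after using $|k|\les 1$ or $|k|\gtrsim 1$), and this fits into the same bound. The main technical nuisance is bookkeeping across the three size regimes for $\beta$ and $\gamma$ to verify that the region-$A$ and region-$C$ contributions are always dominated by the region-$B$ contribution; there is no conceptual obstacle beyond this case analysis.
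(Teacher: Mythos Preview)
Your argument is correct and is exactly the standard three-region decomposition one uses for such bounds; the paper does not give its own proof of this lemma but simply refers the reader to the Appendix of \cite{etza}, where the same approach is carried out. There is nothing to add beyond noting that your bookkeeping in regions $A$ and $C$ checks out in all sub-cases of $\beta$ and $\gamma$.
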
 

\begin{lemma}\label{lem:schur} Let $T$ be an integral operator  with kernel $K(\theta,\eta)$, $\theta\in\R^m$, $\eta\in\R^n$. Assume that for some positive functions $p(\theta)$, $q(\eta)$, and some constants $A, B$ we have 
$$
\int |K(\theta,\eta)| p(\theta) d\theta \leq A q(\eta), \text{ for a.e. } \eta,
$$ 
$$
\int |K(\theta,\eta)| q(\eta) d\eta \leq B p(\theta), \text{ for a.e. } \theta,
$$ 
then $\|T\|_{L^2\to L^2}\leq \sqrt {AB}$.
\end{lemma}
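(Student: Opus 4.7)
The plan is to execute the classical weighted Schur argument: one applies Cauchy--Schwarz once inside the integral defining $Tf(\theta)$, splitting the integrand symmetrically using the weight $q(\eta)$, and then uses Fubini together with the two kernel bounds to close the estimate.

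Concretely, I would first write $Tf(\theta)=\int K(\theta,\eta) f(\eta)\,d\eta$ and factor the integrand pointwise as
$$|K(\theta,\eta) f(\eta)| = \bigl(|K(\theta,\eta)|^{1/2} q(\eta)^{1/2}\bigr)\cdot \bigl(|K(\theta,\eta)|^{1/2} q(\eta)^{-1/2} |f(\eta)|\bigr).$$
Cauchy--Schwarz in $\eta$ then yields
$$|Tf(\theta)|^2 \le \Bigl(\int |K(\theta,\eta)|\, q(\eta)\, d\eta\Bigr)\Bigl(\int |K(\theta,\eta)|\, \tfrac{|f(\eta)|^2}{q(\eta)}\, d\eta\Bigr) \le B\, p(\theta)\int |K(\theta,\eta)|\, \tfrac{|f(\eta)|^2}{q(\eta)}\, d\eta,$$
where the second inequality is the second kernel hypothesis.

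Next I would integrate in $\theta$, apply Fubini (the integrand is nonnegative, so this is unconditional), and invoke the first kernel hypothesis $\int |K(\theta,\eta)|\, p(\theta)\, d\theta \le A\, q(\eta)$ to collapse the $\theta$-integral. The $q(\eta)$ produced by this step cancels against the $1/q(\eta)$ factor, giving $\|Tf\|_{L^2}^2 \le AB\,\|f\|_{L^2}^2$, whence $\|T\|_{L^2\to L^2}\le \sqrt{AB}$.

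There is really no obstacle: this is the textbook Schur test proof. The only mild technical points are that $p,q$ must be positive (so the splitting above is legitimate), which is part of the hypothesis, and that $f$ can be taken in a dense subclass (e.g.\ simple functions vanishing where $q=\infty$) to justify the manipulations, which extends by density to all of $L^2$.
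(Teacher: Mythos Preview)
Your argument is the standard proof of the weighted Schur test and is correct. The paper does not actually prove this lemma; it merely states it in the Appendix as ``the weighted Schur's test'' without proof, so there is nothing to compare against beyond noting that your approach is the usual textbook one.
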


\begin{lemma}\label{lem:chvar} For any $f \in L^1$, $g \in C^1$, any measurable $h$, and any $K \in L^2$, we have
$$\Big\|\int f(\tau-g(\eta))  h(\eta)  K(\eta)d\eta \Big\|_{L^2_\tau}\les \|f\|_{L^1} \|K\|_{L^2} \sup_{\eta}\frac{|h(\eta)|}{\sqrt{|g^\prime(\eta)|}}.$$
\end{lemma}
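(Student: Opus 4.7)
The plan is to reduce the estimate to a convolution inequality (Young) and an $L^2$ change of variables via the substitution $y=g(\eta)$, connecting the two by Cauchy--Schwarz with weight $\sqrt{|g'(\eta)|}$.

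First, I would use duality: writing $F(\tau):=\int f(\tau-g(\eta))h(\eta)K(\eta)\,d\eta$, we have
$$\|F\|_{L^2_\tau}=\sup_{\|G\|_{L^2}=1}\int F(\tau)\overline{G(\tau)}\,d\tau=\sup_{\|G\|_{L^2}=1}\int \phi(g(\eta))\,h(\eta)K(\eta)\,d\eta,$$
where, after interchanging the order of integration (Fubini is justified since all our integrals are finite after the estimates below),
$$\phi(y):=\int f(\tau-y)\overline{G(\tau)}\,d\tau.$$
Young's convolution inequality immediately gives $\|\phi\|_{L^2_y}\leq \|f\|_{L^1}\|G\|_{L^2}$.

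Next, I would insert a factor of $\sqrt{|g'(\eta)|}/\sqrt{|g'(\eta)|}$ into the $\eta$-integral and apply Cauchy--Schwarz, pulling out the pointwise supremum:
$$\Big|\int \phi(g(\eta))\,h(\eta)K(\eta)\,d\eta\Big|\leq \sup_{\eta}\frac{|h(\eta)|}{\sqrt{|g'(\eta)|}}\cdot \|K\|_{L^2}\cdot \Big(\int |\phi(g(\eta))|^2\,|g'(\eta)|\,d\eta\Big)^{1/2}.$$
For the last factor, I would perform the change of variables $y=g(\eta)$ (which under the implicit monotonicity understood for the applications in the paper gives $dy=|g'(\eta)|\,d\eta$) to obtain
$$\int |\phi(g(\eta))|^2\,|g'(\eta)|\,d\eta=\int |\phi(y)|^2\,dy=\|\phi\|_{L^2}^2\leq \|f\|_{L^1}^2\|G\|_{L^2}^2.$$
Taking the supremum over $G$ with $\|G\|_{L^2}=1$ yields the desired inequality.

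The only delicate point is the change of variables step, which uses that $g'$ does not vanish on the set under consideration (so $g$ is locally a bijection). In the applications within the paper (cf.\ the proofs of Propositions~\ref{prop:SXsb}, \ref{prop:KGXsb}, and \ref{prop:DKW}), $g$ is one of $\eta\mapsto \eta^2\pm c$ or $\eta\mapsto\sqrt{1+\xi_1^2-\eta^2}$ restricted to an interval where it is monotone, so the change of variables is licit without any multiplicity loss. For a fully general $g\in C^1$ one would invoke the coarea formula and pay a constant equal to the essential supremum of the multiplicity of $g$, but this is not needed for the present paper.
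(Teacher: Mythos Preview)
Your proof is correct and uses the same two ingredients as the paper's argument, namely Young's inequality and the change of variables $y=g(\eta)$, but the paper takes a more direct route. There one simply substitutes $\rho=g(\eta)$ in the inner integral to rewrite it as a convolution $(f*\tilde K)(\tau)$ with $\tilde K(\rho)=h(\eta)K(\eta)/|g'(\eta)|$, applies Young's inequality $\|f*\tilde K\|_{L^2}\le\|f\|_{L^1}\|\tilde K\|_{L^2}$, and then undoes the change of variables to bound $\|\tilde K\|_{L^2}$ by $\sup_\eta\big(|h(\eta)|/\sqrt{|g'(\eta)|}\big)\,\|K\|_{L^2}$. Your duality and Cauchy--Schwarz steps are equivalent to this but add an unnecessary layer; the caveat you note about monotonicity of $g$ is exactly the same implicit hypothesis underlying the paper's one-line proof.
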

\begin{proof}
Change variables in the $\eta$ integral by setting $\rho = g(\eta)$. Then apply Young's inequality and undo the change of variable.  
\end{proof}
\begin{lemma}\label{lem:cv}
Fix  $0\leq \alpha<1$  and $M,N\gtrsim 1$. Assume that $f:\R^2\to \R$ is $C^1$ and $|f_x|\gtrsim N$. We have
$$
  \int_{\R^2}\frac{\chi_{|x|,|y|<M}}{\la f(x,y)\ra^\alpha}dx dy\les M^{2-\alpha  }N^{-\alpha }.
$$
\end{lemma}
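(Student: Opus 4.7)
The plan is to evaluate the $x$-integral pointwise in $y$ and then integrate trivially in $y$; everything reduces to a pointwise lower bound of the form $|f(x,y)|\gtrsim N|x-x^*(y)|$ on $|x|<M$ for a suitably chosen reference point $x^*(y)$.

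First I would observe that $|f_x|\gtrsim N>0$ together with continuity of $f_x$ forces $f_x(\cdot,y)$ to have constant sign, so $f(\cdot,y)$ is strictly monotone on $\R$. I then define $x^*(y)\in[-M,M]$ to be an argmin of $|f(\cdot,y)|$ over this interval: by monotonicity, $x^*(y)$ is either a (unique) zero of $f(\cdot,y)$ in $[-M,M]$ or else an endpoint $\pm M$. In either case, combining the mean value bound $|f(x,y)-f(x^*(y),y)|\gtrsim N|x-x^*(y)|$ with the one-sided monotonicity of $|f(\cdot,y)|$ away from $x^*(y)$ gives the pointwise estimate $|f(x,y)|\gtrsim N|x-x^*(y)|$, and hence $\la f(x,y)\ra\gtrsim \la N(x-x^*(y))\ra$.

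The change of variables $v=N(x-x^*(y))$ in the inner integral then reduces the inner bound to $N^{-1}\int_{|v|\les MN}\la v\ra^{-\alpha}\,dv\les(MN)^{1-\alpha}/N=M^{1-\alpha}N^{-\alpha}$, where the first inequality uses $MN\gtrsim 1$ and $\alpha<1$. Integrating this estimate trivially over $|y|<M$ contributes an extra factor of $2M$ and yields the claimed bound $M^{2-\alpha}N^{-\alpha}$.

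I do not anticipate any real obstacle; the one point requiring minor care is the case in which $f(\cdot,y)$ has no zero in $[-M,M]$, which is handled uniformly with the zero-case by defining $x^*(y)$ to be the argmin of $|f(\cdot,y)|$ rather than insisting on an actual root.
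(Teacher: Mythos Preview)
Your proof is correct and follows the same reduction as the paper: fix $y$, bound the one–variable integral $\int_{|x|<M}\la f(x,y)\ra^{-\alpha}\,dx\les M^{1-\alpha}N^{-\alpha}$ using $|f_x|\gtrsim N$, then integrate trivially over $|y|<M$. The only difference is tactical: the paper executes the one–variable bound via a dyadic level–set decomposition (the set $\{x:\la f(x,y)\ra\sim MN2^{-k}\}$ has measure $\les M2^{-k}$, then sum in $k$), whereas you establish the pointwise linear lower bound $|f(x,y)|\gtrsim N|x-x^*(y)|$ and change variables directly; the two are interchangeable and equally short.
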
 
\begin{proof}
It suffices to prove that 
$$
\sup_y  \int_{\R }\frac{\chi_{|x| <M}}{\la f(x,y)\ra^\alpha}dx \les M^{1-\alpha  }N^{-\alpha }.
$$
The contribution of the set $\{x: \la f(x,y)\ra> MN\}$ is $\leq 2 M(MN)^{-\alpha}$, which is the needed bound. 
Also note that the measure of the set $\{x:  \la f(x,y)\ra\sim \frac{MN}{2^k}\}$ is $\les  M2^{-k}$, for $k=0,..,\log(MN)$. The claim follows by summing the contributions over $k$.   
\end{proof}

\end{document}